\newcommand{\bl}[1]{{\color{blue}#1}}
\newcommand{\ro}[1]{{\color{red}#1}}
\newcommand{\pa}[1]{{\color{purple}#1}}
\newfont{\gothic}{eufm10}
                   \def\R{{\RR}} 
\def\RR{{\mathbb{R}}}
        \newtheorem{theorem}{Theorem}[section]
\newtheorem{lemma}[theorem]{Lemma}
\newtheorem{proposition}[theorem]{Proposition}
\newtheorem{corollary}[theorem]{Corollary}
\newtheorem{definition1}[theorem]{Definition}
\newenvironment{definition}{\begin{definition1}\rm}{\hfill $\triangle$ \end{definition1}}
\newenvironment{proof}{\addvspace\baselineskip\noindent{\it
Proof:}\quad}{\hspace*{\fill}         $\Box$\par\addvspace\baselineskip}
\newtheorem{remark1}[theorem]{Remark}
\newenvironment{remark}{\begin{remark1}\rm}{\hfill $\triangle$ \end{remark1}}
\newtheorem{example1}[theorem]{Example}
\newenvironment{example}{\begin{example1}\rm}{\hfill $\triangle$ \end{example1}}
\def\barray{\begin{eqnarray*}}             \def\earray{\end{eqnarray*}}
\def\beq{\begin{equation}} \def\eeq{\end{equation}}
\title{Graph fibrations and symmetries of network dynamics}
\author{Eddie Nijholt\thanks{Department of Mathematics, VU University Amsterdam, The Netherlands, {\tt eddie.nijholt@gmail.com}.},  Bob Rink\thanks{Department of Mathematics, VU University Amsterdam, The Netherlands, {\tt b.w.rink@vu.nl}.} \ and Jan Sanders\thanks{Department of Mathematics, VU University Amsterdam, The Netherlands, {\tt jan.sanders.a@gmail.com}.}}
\begin{document}  \hyphenation{boun-da-ry mo-no-dro-my sin-gu-la-ri-ty ma-ni-fold ma-ni-folds re-fe-rence se-cond se-ve-ral dia-go-na-lised con-ti-nuous thres-hold re-sul-ting fi-nite-di-men-sio-nal ap-proxi-ma-tion pro-per-ties ri-go-rous mo-dels mo-no-to-ni-ci-ty pe-ri-o-di-ci-ties mi-ni-mi-zer mi-ni-mi-zers know-ledge ap-proxi-mate pro-per-ty poin-ting ge-ne-ra-li-za-tion ge-ne-ral re-pre-sen-ta-tions equi-variance equi-variant Equi-variance Choo-sing to-po-lo-gy brea-king corres-ponding}
 
\newcommand{\X}{\mathbb{X}}

\newcommand{\p}{\partial}
\maketitle
\noindent 
\abstract{\noindent Dynamical systems with a network structure can display collective behaviour such as synchronisation. Golubitsky and Stewart observed that all the robustly synchronous dynamics of a network is contained in the dynamics of its quotient networks. DeVille and Lerman have recently shown that the original network and its quotients are related by graph fibrations and hence their dynamics are conjugate. This paper demonstrates the importance of self-fibrations of network graphs. Self-fibrations give rise to symmetries in the dynamics of a network. We show that every homogeneous network admits a lift with self-fibrations and that every robust synchrony in this lift is determined by the symmetries of its dynamics. These symmetries moreover impact the global dynamics of network systems and can be used to explain and predict generic scenarios for synchrony breaking. We also discuss networks with interior symmetries and nonhomogeneous networks. }  
               
\section{Introduction}
There are remarkable similarities between dynamical systems with a network structure and dynamical systems with symmetry. It has for example often been noted \cite{field, curious, golstew3, golstew, torok, pikovsky, pivato} that network structure can force a dynamical system to support synchronous and partially synchronous solutions. This phenomenon is known as ``robust network synchrony''. 
Analogously, symmetry forces a dynamical system to admit symmetric solutions. 

It was also observed that network dynamical systems can display unusual bifurcations \cite{bifurcations, elmhirst, krupa, synbreak2, curious, claire2, claire, leite, RinkSanders2}. In fact, there are many examples now of generic one-parameter families of network  systems with anomalous steady state and Hopf bifurcations. These   ``synchrony breaking bifurcations'' are often governed by spectral degeneracies and are reminiscent of the symmetry breaking bifurcations that occur in equivariant dynamics. The latter can often be understood with the help of representation theory and equivariant singularity theory \cite{field3, field4, perspective, golschaef2}, but similar tools are currently not available for the study of network systems. The problem is, arguably, that ``network structure'' is not an intrinsic geometric property of a dynamical system: it is not preserved under coordinate changes.

It was found by Golubitsky and Stewart et al. \cite{golstew, torok, stewartnature, pivato} that the robustly synchronous dynamics of a network system can always be described by a so-called ``quotient network''.  This quotient network arises by identifying the cells of the original network that evolve synchronously. DeVille and Lerman \cite{deville} have recently pointed out that the corresponding quotient map (from the original network graph to its quotient) is an example of a so-called ``graph fibration''. This then implies that there is a conjugacy between the dynamics of the quotient and the dynamics of the original network. The result of DeVille and Lerman provides a geometric explanation for the existence of robust synchrony in networks. We note that very similar results can be found in the computer science literature \cite{boldivigna}.
       
Although robust synchrony is obviously important for the dynamics of networks, its presence does not explain the abundance of anomalous bifurcations in networks. The reason is that robust synchrony does not  affect the global phase space of a network, and is hence not very relevant for the non-synchronous dynamics of network systems. On the other hand, the results of DeVille and Lerman immediately imply that every self-fibration (i.e. graph fibration from a network graph to itself) yields a symmetry in the dynamics of a network. Self-fibrations are therefore dynamically important, and should be thought of as symmetries of network graphs. The self-fibrations of a network in general do not form a group but a semigroup, and self-fibrations may thus not correspond to classical symmetries. 

Network graphs need not admit any nontrivial self-fibrations. We will nevertheless prove in this paper that many networks are quotients of networks with self-fibrations. These network systems are therefore embedded in dynamical systems with symmetries. This can put heavy geometric restrictions on the dynamics of these networks and, in particular, have a nontrivial impact on the singularities that determine the emergence and breaking of synchrony. 
In fact, we prove the following theorem, that will later be formulated more precisely.

\begin{theorem}\label{stellingintro} Consider a network ${\rm\bf N}$ that does not have interchangeable inputs (i.e. all inputs that its cells receive are distinct). Such a network  is the quotient of a network with a semigroup(oid) $\Sigma_{\rm\bf N}$ of self-fibrations. 
The dynamics of this ``lift'' is thus $\Sigma_{\rm\bf N}$-equivariant.

 Moreover, every robust synchrony in the lift (and hence every robust synchrony in the original network) is an invariant subspace for any $\Sigma_{\rm\bf N}$-equivariant dynamical system.
\end{theorem}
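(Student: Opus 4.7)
The approach is to construct the lift $\tilde{\mathbf{N}}$ explicitly as a ``free'' cover generated by the input structure of $\mathbf{N}$, so that the input data become encoded as self-fibrations. The hypothesis of no interchangeable inputs is what makes this possible at the level of set maps. Concretely, for each input type $i$ and each cell $c$ of $\mathbf{N}$ that carries an $i$-input, there is a uniquely determined source cell $\sigma_i(c)$, so the input structure of $\mathbf{N}$ gives a family of (partial) maps $\sigma_i: C \to C$ on the cell set. Let $\Sigma_{\mathbf{N}}$ be the semigroupoid generated by these maps under composition. I would then take the lift $\tilde{\mathbf{N}}$ to have cells indexed by the elements of $\Sigma_{\mathbf{N}}$ (equivalently, by formal words in the $\sigma_i$'s applied to a base set of cells), with the input of type $i$ into a cell $w$ assigned to come from the cell $w \circ \sigma_i$.

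With this construction, three verifications remain. First, the forgetful projection $\pi: \tilde{\mathbf{N}} \to \mathbf{N}$ that evaluates each word in $C$ is a graph fibration, because the unique $i$-input of $\pi(w) \in C$ corresponds under $\pi$ to the unique $i$-input of $w$. Second, the self-maps of $\tilde{\mathbf{N}}$ obtained by pre-composing each word with a fixed $\tau \in \Sigma_{\mathbf{N}}$ are themselves graph fibrations, for the same input-preservation reason. Closing under composition we obtain $\Sigma_{\mathbf{N}}$ as a semigroupoid of self-fibrations of $\tilde{\mathbf{N}}$. By the DeVille--Lerman conjugacy result cited in the introduction, every admissible vector field for $\tilde{\mathbf{N}}$ is therefore $\Sigma_{\mathbf{N}}$-equivariant, which yields the first, structural part of the statement.

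For the invariance of robust synchronies, the key observation is that the lift was designed so that the input of type $i$ into any cell $\tilde c$ is the image of $\tilde c$ under the self-fibration corresponding to $\sigma_i$. Admissibility of a vector field on $\tilde{\mathbf{N}}$ is then equivalent to $\Sigma_{\mathbf{N}}$-equivariance (not merely implied by it). Consequently, the balanced polydiagonals of $\tilde{\mathbf{N}}$ --- which by Golubitsky--Stewart are exactly the robust synchrony subspaces --- are invariant under every $\Sigma_{\mathbf{N}}$-equivariant dynamical system. The parenthetical claim about robust synchronies of $\mathbf{N}$ then follows by pulling back along $\pi$, since the DeVille--Lerman conjugacy identifies synchrony subspaces of $\mathbf{N}$ with those synchrony subspaces of $\tilde{\mathbf{N}}$ that are constant on fibers of $\pi$.

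The main obstacle I anticipate is making the lift construction both well-defined and minimal: one must decide exactly which formal words in the $\sigma_i$'s are to be identified as distinct cells of $\tilde{\mathbf{N}}$. Identifying too much destroys the self-fibration property, while identifying too little produces an unnecessarily large lift and may break the equivalence between admissibility and $\Sigma_{\mathbf{N}}$-equivariance. The ``no interchangeable inputs'' hypothesis is precisely what is needed to realize $\Sigma_{\mathbf{N}}$ as a single-valued semigroupoid of genuine graph homomorphisms rather than a multi-valued relation; without it, the source map $\sigma_i$ is not even well-defined and the more involved ``interior symmetries'' treatment alluded to in the introduction becomes necessary.
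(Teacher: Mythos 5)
Your construction of the lift is essentially the paper's ``fundamental network'': cells indexed by the semigroup(oid) $\Sigma_{\rm\bf N}$ generated by the input maps, inputs given by multiplication on one side, self-fibrations by multiplication on the other, and the projection to ${\rm\bf N}$ given by evaluating words at a base cell. One caveat before the genuine gap: as written you place the input maps and the candidate self-fibrations on the \emph{same} side (the $i$-input of $w$ comes from $w\circ\sigma_i$, and the self-maps are $w\mapsto w\circ\tau$). Two right multiplications do not commute in a noncommutative semigroup, so with that choice the self-maps would not intertwine the input maps and would not be graph fibrations; moreover the evaluation map $w\mapsto w(v)$ intertwines $w\mapsto\sigma_i\circ w$ with $\sigma_i$, not $w\mapsto w\circ\sigma_i$. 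The construction works precisely because the input maps act by left multiplication, $\widetilde\sigma_i(w)=\sigma_i\circ w$, while the self-fibrations act by right multiplication, $\phi_\tau(w)=w\circ\tau$, and left and right multiplication commute. Granting that this is a notational slip, the first, structural part of your argument coincides with the paper's.

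The second part has a real gap. You assert that admissibility for $\widetilde{\rm\bf N}$ is \emph{equivalent} to $\Sigma_{\rm\bf N}$-equivariance and then invoke Golubitsky--Stewart to conclude that balanced polydiagonals are invariant under every equivariant system. The implication you need for that ``consequently'' is ``equivariant $\Rightarrow$ admissible for $\widetilde{\rm\bf N}$'', and it is false: $\Sigma_{\rm\bf N}$ generally contains elements $\sigma_{m+1},\dots,\sigma_n$ that are compositions of the generators but are not input maps of $\widetilde{\rm\bf N}$, and an equivariant $\gamma$ is only constrained to satisfy $\gamma_{\sigma_i}(X)=\gamma_{\sigma_1}(X_{\sigma_1\circ\sigma_i},\dots,X_{\sigma_n\circ\sigma_i})$, so $\gamma_{\sigma_1}$ may depend on all $n$ of the variables $X_{\sigma_1},\dots,X_{\sigma_n}$ rather than only on the $m$ inputs of $\widetilde{\rm\bf N}$. (For network ${\rm\bf B}$ of the paper, an equivariant $\gamma_{\sigma_1}$ may depend arbitrarily on $X_{\sigma_4}=X_{\sigma_2\circ\sigma_2}$, which is not an input of cell $\sigma_1$ in $\widetilde{\rm\bf B}$.) What equivariance actually yields --- and what the paper proves --- is that $\gamma$ is admissible for the \emph{extended} network on the same vertex set whose input maps are all of $\widetilde\sigma_1,\dots,\widetilde\sigma_n$. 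One then needs the additional observation that a partition balanced for $\widetilde{\rm\bf N}$ is automatically balanced for this extended network, because each $\widetilde\sigma_j$ with $j>m$ is a composition of $\widetilde\sigma_1,\dots,\widetilde\sigma_m$, and maps preserving a partition compose to maps preserving it. Without this step your argument does not close; with it, it becomes the paper's proof.
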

Theorem \ref{stellingintro} shows that, for a large class of networks, robust synchrony is a direct consequence of symmetry. This symmetry may nevertheless be hidden in a lift of the network, and it may form a semigroup rather than a group. An even more important conclusion of Theorem \ref{stellingintro} is that network systems are examples of equivariant dynamical systems. This suggests to study networks with techniques that are common in equivariant dynamics (such as representation theory and equivariant singularity theory).

This paper is based on ideas that are present in rudimentary form in our earlier work \cite{RinkSanders3}, but it has been fully formulated in the language of graph fibrations. It moreover demonstrates that hidden symmetry has far-reaching consequences for dynamics. We show for example that ``interior symmetry'' can be viewed as hidden symmetry.

This paper is organised as follows. We start by discussing a few remarkable phenomena in network dynamical systems in Section \ref{examplessection}. We review some existing general theory on coupled cell networks in Sections \ref{networkssection} and \ref{graphfibrationssection}. So-called ``homogeneous'' networks and their symmetry properties are studied in Sections \ref{homnetsection}, \ref{fundamentalsection} and \ref{hiddensection}, and we prove Theorem \ref{stellingintro} for these networks in Section \ref{hiddensection}. In Section \ref{perspectivesection} we discuss the importance of hidden network symmetry, while in Section \ref{methodssection}  we demonstrate how it can be used in the study of local bifuctions. Finally, Section \ref{interiorsection} is concerned with interior symmetry, and in Section \ref{nonhomogeneoussection} we generalise our results for homogeneous networks to nonhomogeneous networks.
\vspace{-2mm}
\subsection*{Acknowledgement} The authors would like to thank Marty Golubitsky for his interest in this topic, and for asking the first questions that eventually led to this paper.
\vspace{-2mm}
\section{Three examples}\label{examplessection}
Figure \ref{pict1} depicts the homogeneous networks {\bf A}, {\bf B} and {\bf C} (see Section \ref{homnetsection} for a definition) that each contain three vertices receiving two different arrows. One could think of these networks as consisting of (groups of) identical neurons, that each receive for instance one excitatory signal (say through the solid blue arrow) and one inhibitory signal (the dashed red arrow). 
 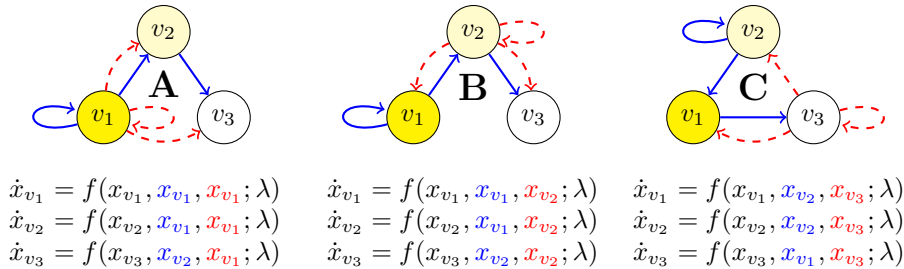
\begin{figure}[h]
 \begin{center}
\renewcommand{\figurename}{\rm \bf \footnotesize Figure}
\vspace{-4mm}
\begin{tabular}{p{4.1cm}p{4.1cm}p{4.1cm}}
\hspace{6mm}
\begin{tikzpicture}[->, scale=1.6]
	  \tikzstyle{vertextype1}=[circle, draw, minimum size=20pt,inner sep=1pt]
	 \tikzstyle{vertextype2} = [circle, draw, fill=yellow, minimum size=20pt,inner sep=1pt]
	 \tikzstyle{vertextype3} = [circle, draw, fill=yellow!25, minimum size=20pt,inner sep=1pt]
	 \tikzstyle{edgetype1} = [->, draw,line width=3pt,-,red!50]
	 \tikzstyle{edgetype2} = [draw,thick,-,black]
	  \node at (1,.95) {};
	 \node at (1,-.25) {};
	  \node at (1.5,.28) {\Large {\bf A}};
	 \node[vertextype2] (v3) at (1,0) {$v_1$};
	 \node[vertextype3] (v2) at (1.5, .71) {$v_2$};
	 \node[vertextype1] (v1) at (2,0) {$v_3$};
	\path[]
	(v2) edge [thick, blue] node {} (v1)
	(v3) edge [loop left, blue, thick] node {} (v3)
	(v3) edge [thick, blue] node {} (v2)
	(v3) edge [loop right, red, dashed, thick] node {} (v3)
	(v3) edge [thick, bend left, red, dashed] node {} (v2)
	(v3) edge [thick, dashed, bend right, red] node {} (v1);
 \end{tikzpicture}
&
\hspace{2mm}
\begin{tikzpicture}[->, scale=1.6]
	  \tikzstyle{vertextype1}=[circle, draw, minimum size=20pt,inner sep=1pt]
	 \tikzstyle{vertextype2} = [circle, draw, fill=yellow, minimum size=20pt,inner sep=1pt]
	 \tikzstyle{vertextype3} = [circle, draw, fill=yellow!25, minimum size=20pt,inner sep=1pt]
	 \tikzstyle{edgetype1} = [->, draw,line width=3pt,-,red!50]
	 \tikzstyle{edgetype2} = [draw,thick,-,black]
	  \node at (1,.95) {};
	 \node at (1,-.25) {};
	  \node at (1.5,.25) {\Large {\bf B}};
	 \node[vertextype2] (v2) at (1,0) {$v_1$};
	 \node[vertextype3] (v3) at (1.5, .71) {$v_2$};
	 \node[vertextype1] (v1) at (2,0) {$v_3$};
	\path[]
	(v2) edge [thick, blue] node {} (v3)
	(v2) edge [loop left, blue, thick] node {} (v2)
	(v3) edge [thick, blue] node {} (v1)
	(v3) edge [loop right, red, dashed, thick] node {} (v3)
	(v3) edge [thick, red, dashed, bend right] node {} (v2)
	(v3) edge [thick, dashed, red, bend left] node {} (v1);
 \end{tikzpicture} 
& \hspace{2mm}
\begin{tikzpicture}[->, scale=1.6]
	  \tikzstyle{vertextype1}=[circle, draw, minimum size=20pt,inner sep=1pt]
	 \tikzstyle{vertextype2} = [circle, draw, fill=yellow!100, minimum size=20pt,inner sep=1pt]
	 \tikzstyle{vertextype3} = [circle, draw, fill=yellow!25, minimum size=20pt,inner sep=1pt]
	 \tikzstyle{edgetype1} = [->, draw,line width=3pt,-,red!50]
	 \tikzstyle{edgetype2} = [draw,thick,-,black]
	 \node at (1,.95) {};
	 \node at (1,-.25) {};
	 \node at (1.5,.25) {\Large {\bf C}};
	 \node[vertextype1] (v3) at (2,0) {$v_3$};
	 \node[vertextype3] (v2) at (1.5, .71) {$v_2$};
	 \node[vertextype2] (v1) at (1,0) {$v_1$};
	\path[]
	(v1) edge [thick, blue] node {} (v3)
	(v2) edge [loop left, blue, thick] node {} (v2)
	(v2) edge [thick, blue] node {} (v1)
	(v3) edge [loop right, red, dashed, thick] node {} (v3)
	(v3) edge [thick, red, dashed] node {} (v2)
	(v3) edge [bend left, red, thick, dashed] node {} (v1);
 \end{tikzpicture}
 \end{tabular}
 \\ 
 \begin{tabular}{p{3.8cm}p{3.65cm}p{3.8cm}}
\vspace{-9mm} 
  \begin{equation}\label{NWA} \nonumber  
 \begin{array}{l}
 \dot x_{v_1} = f(x_{v_1}, \bl{x_{v_1}}, \ro{x_{v_1}}; \lambda) \\
 \dot x_{v_2} = f(x_{v_2}, \bl{x_{v_1}}, \ro{x_{v_1}}; \lambda)\\
\dot x_{v_3} = f(x_{v_3}, \bl{x_{v_2}}, \ro{x_{v_1}}; \lambda) 
 \end{array}
 \end{equation} & \vspace{-9mm}
  \begin{equation}\label{NWB}  \nonumber 
 \begin{array}{l}
 \dot x_{v_1} = f(x_{v_1}, \bl{x_{v_1}}, \ro{x_{v_2}}; \lambda)\\
 \dot x_{v_2} = f(x_{v_2}, \bl{x_{v_1}}, \ro{x_{v_2}}; \lambda) \\
  \dot x_{v_3} = f(x_{v_3}, \bl{x_{v_2}}, \ro{x_{v_2}}; \lambda) 
 \end{array}
 \end{equation}
& \vspace{-.9cm}
   \begin{equation}\label{NWC}  \nonumber 
 \begin{array}{l}   
  \dot x_{v_1} = f(x_{v_1}, \bl{ x_{v_2}}, \ro{x_{v_3}}; \lambda) \\
 \dot x_{v_2} = f(x_{v_2}, \bl{x_{v_2}}, \ro{x_{v_3}}; \lambda) \\
\dot x_{v_3} = f(x_{v_3}, \bl{x_{v_1}}, \ro{x_{v_3}}; \lambda) 
 \end{array}
 \end{equation} 
  \end{tabular}
\vspace{-6mm}
\caption{\footnotesize {\rm Homogeneous networks with $3$ identical cells and $2$ types of inputs.}}
   \label{pict1}
   \vspace{-4mm}
    \end{center}
       \end{figure}\\
\noindent The states of the cells of the networks are determined by variables $x_{v_1}, x_{v_2}, x_{v_3}\in \R$ (for example membrane potentials). These variables then obey the equations of motion displayed below the network graphs in Figure \ref{pict1}. The  response function $f:\R^3\times\R\to\R$ describes the precise dependence of the evolution of each cell on its own state and on its two incoming signals, and thus determines the actual dynamics of the network. We let this $f$ depend on a parameter $\lambda\in\R$, to express that it can sometimes be modified in experiments, or that it may not be entirely known. 
 
In spite of their different architectures, the dynamics of networks {\bf A}, {\bf B} and {\bf C} admit exactly the same (partial) synchronies. For example, setting $x_{v_1}=x_{v_2}$ in the equations of motion of either one of the networks, yields that $\dot x_{v_1} = \dot x_{v_2}$. The subspace $\{x_{v_1}=x_{v_2}\}$ is thus invariant under the dynamics of all three network systems, independently of the precise form of the function $f$. Similarly, $x_{v_1}=x_{v_2}=x_{v_3}$ gives that $\dot x_{v_1}=\dot x_{v_2}=\dot x_{v_3}$. Moreover, these are the only such equalities. We conclude that the subspaces 
\vspace{-1mm}
$$\{x_{v_1}=x_{v_2}\} \ \mbox{(``partial synchrony'') and}\ \{x_{v_1}=x_{v_2}=x_{v_3}\}\ \mbox{(``full synchrony'')}$$
are the two ``robust synchronies'' of networks {\bf A}, {\bf B} and {\bf C}. They can be thought of as dynamical invariants of the network graphs, see Section \ref{graphfibrationssection}. 

To understand how synchrony can emerge or disappear, assume now that $f(0,0,0;\lambda)=0$. This means that $x=(0,0,0)$ is a fully synchronous steady state of the network dynamics for all values of the parameter $\lambda$. One then says that a ``synchrony breaking steady state bifurcation'' occurs at $\lambda=0$, when less synchronous steady states emerge near this fully synchronous state as $\lambda$ varies near $0$. This can only happen if for $\lambda = 0$, the linearisation matrix of the differential equations around $x=(0,0,0)$ is degenerate. This linearisation matrix is easy to compute and reads (writing $a=D_1f(0,0,0;0)$ etc.)
\vspace{-.5cm}
\begin{center}
\begin{align}
 \nonumber
 \mbox{for network {\bf A}:}\  {  \left( \begin{array}{rrr} 
 a + \bl{b} +  \ro{c} & 0 & 0 \\
 \bl{b} + \ro{c} & a & 0    \\
 \ro{c} & \bl{b} & a 
 \end{array}\right)}  \ ; \\ 
 \nonumber
 \mbox{for network {\bf B}:}\  {  \left( \begin{array}{rrr} 
 a + \bl{b}   & \ro{c} & 0 \\
 \bl{b} & a+ \ro{c}  & 0    \\
 0 & \bl{b}+ \ro{c} & a 
 \end{array}\right)}\ ;\\ 
\nonumber
\hspace{-.5cm} \mbox{for network {\bf C}:}\ 
{   \left( \begin{array}{rrr} 
a & \bl{b} & \ro{c}\\
 0 & a + \bl{b} & \ro{c} \\
\bl{b} & 0& a+\ro{c} 
 \end{array}\right) }\ .
 \end{align}
\end{center}
\vspace{-1mm}
Interestingly, these three linearisation matrices all have an eigenvalue $a + \bl{b} + \ro{c}$ with multiplicity $1$ and a defective eigenvalue 
 $a$ with algebraic multiplicity $2$ and geometric multiplicity $1$.  The eigenvector for the eigenvalue $a + \bl{b} + \ro{c}$ is fully synchronous, so
 synchrony breaking can only occur when $a = 0$. The degeneracy of this eigenvalue suggests that the resulting steady state bifurcation may be quite unusual. Indeed, a singularity analysis (that we do not provide here) reveals that in a generic one-parameter synchrony breaking bifurcation in either one of the networks, two branches of steady states $x(\lambda)$ are born from the synchronous state: a partially synchronous and a non-synchronous branch. Table \ref{table} shows the asymptotics of these branches for the three networks.  
 \vspace{-.2cm}
\begin{table}[h]
\renewcommand{\tablename}{\rm \bf \footnotesize Table} 
\begin{center}
 \begin{tabular}{|c|c|}
 \multicolumn{2}{c}{\bf Network A}\\
\hline  \rowcolor[gray]{0.95}
{\bf Asymptotics} & {\bf Synchrony} \\ 
\hline \hline
$x_{v_1}=x_{v_2}=x_{v_3}=0$ & Full\\
\hline 
$x_{v_1}=x_{v_2}=0, x_{v_3} \sim \lambda$ & Partial \\
\hline 
$x_{v_1}=0, x_{v_2}\sim \lambda, x_{v_3} \sim \pm \sqrt{\lambda}$ & None\\
\hline   
 \multicolumn{2}{c}{ }
 \end{tabular}
 \begin{tabular}{|c|c|}
 \multicolumn{2}{c}{\bf Network B}\\
\hline  \rowcolor[gray]{0.95}
{\bf Asymptotics} & {\bf Synchrony} \\ 
\hline \hline
$x_{v_1}=x_{v_2}=x_{v_3}=0$ & Full\\
\hline 
$x_{v_1}=x_{v_2}=0, x_{v_3} \sim \lambda$ & Partial \\
\hline 
$x_{v_1}\sim \lambda, x_{v_2}\sim \lambda$ & 
\multicolumn{1}{ c| }{\multirow{2}{*}{None} }
\\ 
$x_{v_1}-x_{v_2}\sim \lambda, x_{v_3} \sim \pm \sqrt{\lambda}$ & \\
\hline 
 \end{tabular}
 \\ \vspace{1mm}
  \begin{tabular}{|c|c|}
  \multicolumn{2}{c}{\bf Network C}\\
\hline
\rowcolor[gray]{0.95} 
{\bf Asymptotics} & {\bf Synchrony} \\
\hline \hline 
$x_{v_1}=x_{v_2}=x_{v_3}=0$ & Full \\
\hline 
$x_{v_1}=x_{v_2}\sim \lambda, x_{v_3} \sim \lambda, x_{v_{1,2}}-x_{v_3} \sim \lambda$ & Partial \\ \hline 
$x_{v_1}\sim \lambda, x_{v_2} \sim \lambda, x_{v_3} \sim \lambda$ & None but \\ 
 $x_{v_1}-x_{v_2}\sim \lambda^2, x_{v_{1,2}}-x_{v_3} \sim \lambda$ & almost partial\\
\hline  
 \multicolumn{2}{c}{ }
 \end{tabular}
 \end{center}
 \vspace{-6mm}
 \caption{\footnotesize {\rm Asymptotics of steady state branches in generic synchrony breaking bifurcations. 
 }}
 \vspace{-2mm}
 \label{table}
 \end{table}
 \\
\noindent  Although networks {\bf A}, {\bf B} and {\bf C} display exactly the same robust synchronies and spectral properties, their synchrony breaking bifurcations are very different. For example, the generic synchrony breaking branches of the three networks clearly have different asymptotics. Another distinction between the networks concerns the dynamical stability of the bifurcating branches. In fact, in a generic synchrony breaking bifurcation, the fully synchronous state loses stability when $\lambda$ passes through $0$. In networks {\bf A} and {\bf B}, it is then only possible that the non-synchronous state becomes stable, but it turns out that in network {\bf C}, stability can also be transferred to the partially synchronous state. The different synchrony breaking behaviour of networks {\bf A}, {\bf B} and {\bf C} is fully determined by nonlinearities in the differential equations. This paper aims to give a geometric explanation of this nonlinear effect.    

\section{Networks}\label{networkssection}
Every dynamical system trivially has a network structure. Nevertheless, the observables of certain dynamical systems have a nontrivial interaction structure. Such a structure can be encoded in a network graph, that describes how the evolution of each observable depends on the values of others. In the literature \cite{deville, golstew, torok}, these network graphs are usually finite directed graphs, of which the vertices (also referred to as ``cells'') and arrows (``couplings'') are all of a certain type (``colour''). We have in mind that every cell has a state, that evolves in time under the influence of those cells from which it receives a coupling. One also requires compatibility between the coloured cells and coloured couplings, to express that cells of the same type respond in the same way to their inputs. The relevant definition is the following:

\begin{definition}\label{defnetwork}
A {\it network} is a finite directed graph ${\rm \bf N} = \{A\rightrightarrows^s_t V\}$ (where $A$ are the arrows, $V$ are the vertices, and $s$ and $t$ denote the source and target maps), in which all vertices and arrows are assigned a colour, such that
\begin{itemize}
\item[{\bf 1.}] if two arrows $a_1, a_2\in A$ have the same colour, then so do their sources $s(a_1)$ and $s(a_2)$, and so do their targets $t(a_1)$ and $t(a_2)$.
\item[{\bf 2.}] if two vertices $v_1, v_2\in V$ have the same colour, then there is a colour-preserving bijection $\beta_{v_2, v_1}:t^{-1}(v_1)\to t^{-1}(v_2)$ between the arrows that target $v_1$ and $v_2$.
\end{itemize}\vspace{-.5cm}
\end{definition}
The collection 
$$\mathbb{G}:=\{\, \beta_{v_2, v_1}: t^{-1}(v_1)\to t^{-1}(v_2)\ \mbox{colour preserving bijection} \, | \, v_1, v_2\in V \, \}$$ 
is   called the {\it symmetry groupoid} of the network {\bf N}. It is a groupoid, because its elements are invertible and the compositions $\beta_{v_3, v_2}\circ \beta_{v_2, v_1}$ define a partial associative product. 

The symmetry groupoid describes ``local symmetries'' between cells. Indeed, for fixed vertices $v_1, v_2\in V$, we can define $\mathbb{G}_{v_2,v_1}:= \{\beta_{v_2,v_1}\in \mathbb{G}\}$. This set is nonempty if and only if $v_1$ and $v_2$ have the same colour. The ``vertex groups'' $\mathbb{G}_{v_1,v_1}$ and  $\mathbb{G}_{v_2,v_2}$ are then isomorphic. 

Given a network {\bf N}, we now describe a natural class of maps compatible with ${\rm\bf N}$. These {\it network maps} will then give rise to network dynamical systems. First of all, we will assume that every vertex $v\in{\rm \bf N}$ has a ``state'' determined by a variable $x_v\in E_v$, taking values in a finite-dimensional vector space $E_v$ (or a manifold, but we will not pursue this straightforward generalisation). The total state of the network is thus given by an element 
$$x\in E_{\rm \bf N}:= \prod_{v\in V} E_v\, .$$         
We have in mind that the $v$-th component of a network map should only depend on the states of those vertices $w$ for which there is an arrow $a\in A$ with $s(a)=w$ and $t(a)=v$. Hence we define a ``projection'' from the total phase space onto the input variables of cell $v$, 
$$\pi_v:E_{\rm\bf N}\to  \prod_{t(a) =v} E_{s(a)} \, .$$
Note that $\pi_vx$ may contain some state variables repeatedly if different arrows that target $v$ have the same source. Finally, we choose for every vertex $v\in V$ a ``response function'' 
$$f^v: \prod_{t(a) =v} E_{s(a)} \to E_v\, . $$
The network and response functions together then yield a map with a ``network structure'':
$$\gamma_f^{\rm\bf N}: E_{\rm \bf N}\to E_{\rm \bf N} \ \mbox{defined by}\ (\gamma_f^{\rm\bf N})_v(x) := f^v(\pi_vx)\, .$$
As required, $(\gamma_f^{\rm\bf N})_v(x)$ only depends on the values $x_{s(a)}$ for those $a\in A$ with $t(a)=v$.

Finally, we will impose restrictions on the response functions to ensure compatibility of $\gamma_f^{\rm\bf N}$ with the colouring of the arrows and vertices of the network. First of all, it is natural to assume that
vertices with the same colour have the same sets of state variables: 
$$E_{v_1}=E_{v_2} \ \mbox{when}\ v_1 \ \mbox{and}\ v_2\ \mbox{have the same colour.} $$
It then follows from Definition \ref{defnetwork} that $E_{s(a)} = E_{s\left(\beta_{v_2,v_1}(a)\right)}$ for all $a\in A$ with $t(a)=v_1$. This last observation allows us to define, for all $\beta_{v_2, v_1} \in \mathbb{G}$, the input identification  
$$\beta_{v_2,v_1}^*:  \prod_{t(a) =v_2} E_{s(a)} \to \prod_{t(a) =v_1} E_{s(a)}\ \, \mbox{by}\ \, (\beta_{v_2,v_1}^*X)_{s(a)} := X_{s\left(\beta_{v_2, v_1}(a)\right)} \, .$$
We shall require that cells of the same colour respond in the same way to their incoming signals, and that signals of the same colour have the same impact on a cell, i.e.
\begin{itemize}
\item[{\bf 3.}] the response functions are groupoid-invariant:  
$$f^{v_1}\circ \beta_{v_2, v_1}^* = f^{v_2}\ \mbox{for all}\ \beta_{v_2, v_1} \in \mathbb{G}.$$ 
\end{itemize}
This final assumption expresses how the local symmetries of the network ${\rm\bf N}$ give rise to local symmetries in the components of the network maps $\gamma_f^{\rm \bf N}$. In particular, if a vertex group $\mathbb{G}_{v,v}$ is nontrivial, then $f^v$ must be invariant under certain permutations of inputs. 

We summarise our assumptions in the following definition:
\begin{definition}
A map $\gamma: E_{\rm\bf N}\to E_{\rm\bf N}$ is a {\it network map} for the network ${\rm\bf N}=\{A\rightrightarrows_t^s V\}$ if there is a set of smooth response functions $\{f^v\}_{v\in V}$ satisfying {\bf 3}, so that $\gamma=\gamma_f^{\rm \bf N}$. 
\end{definition}
 Network maps are also referred to as {\it admissible maps} in the literature. 
A ``network dynamical system'' on $E_{\rm \bf N}$  arises now from the flow of the ordinary differential equation
$$\dot x = \gamma_f^{\rm \bf N}(x)\, .$$ 
As was pointed out in \cite{deville}, one may think of this ODE as a set of coupled ``open control systems'' (namely the individual ODEs $\dot x_v = f^v(\pi_vx)$ for $v\in V$).
Rather than ODEs, one may also consider discrete-time network dynamical systems on $E_{\rm \bf N}$ of the form 
$$x_{n+1}= \gamma_f^{\rm\bf N}(x_n)\, .$$
We conclude by remarking that, as in Section \ref{examplessection}, we sometimes want to study parameter families of network dynamical systems. Then the response functions $f^v=f^v(\cdot;\lambda)$ themselves become smooth functions of a parameter $\lambda$ that takes values in some open set $\Lambda\subset \mathbb{R}^p$. For the moment, we shall suppress this parameter dependence in our notation though.

\section{Graph fibrations and robust synchrony}\label{graphfibrationssection}
Synchrony and partial synchrony are prominent forms of collective behaviour of network systems, in which certain cells undergo the same evolution. Mathematically, synchrony can be described as follows. Let $P=\{P_1, \ldots, P_r\}$ be a partition of the cells of a network ${\rm \bf N}=\{A\rightrightarrows_t^s V\}$, that is $P_1\cup \ldots\cup P_r=V$, and $P_i\cap P_j=\emptyset$ if $i\neq j$. For $v_1, v_2\in V$, we shall write $v_1\sim_P v_2$ if there is a $k$ such that $v_1, v_2\in P_k$. Then $\sim_P$ defines an equivalence relation. 
We   now define the {\it synchrony space} ${\rm Syn}_P\subset E_{\rm \bf N}$ associated to this partition as
$${\rm Syn}_P:=\{x\in E_{\rm \bf N}\, |\, x_{v_1}=x_{v_2} \ \mbox{when}\ v_1 \sim_P v_2\, \}\, .$$
For this definition to make sense, one must of course require that $E_{v_1}=E_{v_2}$ when $v_1 \sim_P v_2$. 

Of dynamical interest are those synchronies that are preserved in time. Such synchronies are determined by  synchrony spaces that are invariant under the network dynamics, i.e. for which $\gamma_f^{\rm\bf N}({\rm Syn}_P)\subset {\rm Syn}_P$. This latter inclusion clearly depends on the choice of the response functions $f^v$, but certain synchrony spaces are always dynamically invariant, irrespective of the choice of response functions. These special synchrony spaces depend only on the network ${\rm \bf N}$. The following well-known result characterises these synchrony spaces in terms of the network structure. For a more elaborate proof of Theorem \ref{balancedlemma}, we refer to \cite{pivato}.

\begin{theorem}\label{balancedlemma} Let $P$  be a partition of the cells of a network ${\rm \bf N}$. The following are equivalent:
\begin{itemize}
\item[{\it i)}] $\gamma_f^{\rm\bf N}({\rm Syn}_P)\subset {\rm Syn}_P$ for all choices of response functions $\{f^v\}_{v\in V}$ satisfying  {\bf 3}.
In this case, one says that ${\rm Syn}_P$ is a {\rm robust} synchrony space. 
\item[{\it ii)}] For all vertices $v_1 \sim_P v_2$, there is a $\beta_{v_2, v_1}\in \mathbb{G}_{v_2, v_1}$ such that for every arrow $a$  with $t(a)=v_1$, it holds that
$s(a) \sim_P s\left(\beta_{v_2,v_1}(a)\right)$. The partition is then called {\rm balanced}.
\end{itemize}
\end{theorem}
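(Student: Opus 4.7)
The plan is to prove the two implications separately. Direction $(ii) \Rightarrow (i)$ should reduce to a direct computation exploiting the groupoid-invariance {\bf 3} of the response functions; the converse $(i) \Rightarrow (ii)$ will require a contrapositive construction of a synchronous point together with compatible response functions whose image escapes ${\rm Syn}_P$.

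For $(ii) \Rightarrow (i)$, I fix $x \in {\rm Syn}_P$ and any pair $v_1 \sim_P v_2$, and aim to show $(\gamma_f^{\rm\bf N})_{v_1}(x) = (\gamma_f^{\rm\bf N})_{v_2}(x)$. The balanced hypothesis supplies $\beta \in \mathbb{G}_{v_2,v_1}$ with $s(a) \sim_P s(\beta(a))$ for every $a \in t^{-1}(v_1)$. Then, using the definition of $\beta^*$ together with the $P$-synchrony of $x$, one computes $(\beta^* \pi_{v_2}x)_{s(a)} = x_{s(\beta(a))} = x_{s(a)} = (\pi_{v_1}x)_{s(a)}$, so $\beta^* \pi_{v_2}x = \pi_{v_1}x$, and property {\bf 3} gives $f^{v_2}(\pi_{v_2}x) = f^{v_1}(\beta^* \pi_{v_2}x) = f^{v_1}(\pi_{v_1}x)$, as required.

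For $(i) \Rightarrow (ii)$, I argue contrapositively: assume $v_1 \sim_P v_2$ violate the balanced condition, so for every $\beta \in \mathbb{G}_{v_2,v_1}$ there is an arrow $a \in t^{-1}(v_1)$ with $s(a) \not\sim_P s(\beta(a))$. I choose pairwise distinct values $y_1, \ldots, y_r$ in the (common) state space of each block $P_k$ and set $x_v := y_{c(v)}$, where $c(v)$ indexes the block containing $v$. Then $x \in {\rm Syn}_P$ and distinctness of the $y_k$'s guarantees $\pi_{v_1}x \neq \beta^* \pi_{v_2}x$ for every $\beta \in \mathbb{G}_{v_2,v_1}$. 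I would then build a smooth $\mathbb{G}_{v_1,v_1}$-invariant response function $f^{v_1}$ taking a nonzero value at $\pi_{v_1}x$ and vanishing on the finitely many $\mathbb{G}_{v_1,v_1}$-orbits of the points $\beta^* \pi_{v_2}x$, transport it along the colour class of $v_1$ by $f^v := f^{v_1} \circ \beta_{v,v_1}^*$ for any chosen $\beta_{v,v_1} \in \mathbb{G}_{v,v_1}$, and set $f^v \equiv 0$ for cells of all other colours. By construction $f$ satisfies {\bf 3}, while $f^{v_1}(\pi_{v_1}x) \neq 0 = f^{v_1}(\beta^* \pi_{v_2}x) = f^{v_2}(\pi_{v_2}x)$, so $\gamma_f^{\rm\bf N}(x) \notin {\rm Syn}_P$.

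The main obstacle is verifying that the separating $\mathbb{G}_{v_1,v_1}$-invariant $f^{v_1}$ really exists: this requires the $\mathbb{G}_{v_1,v_1}$-orbit of $\pi_{v_1}x$ to be disjoint from every $\mathbb{G}_{v_1,v_1}$-orbit of each $\beta^* \pi_{v_2}x$. The composition identity $\alpha^* \circ \beta^* = (\beta \circ \alpha)^*$ for $\alpha \in \mathbb{G}_{v_1,v_1}$ and $\beta \in \mathbb{G}_{v_2,v_1}$ shows that every point in such an orbit has the form $\gamma^* \pi_{v_2}x$ for some $\gamma \in \mathbb{G}_{v_2,v_1}$, so the unbalanced hypothesis together with distinct $y_k$'s keeps all these points away from $\pi_{v_1}x$, and a standard smooth bump construction then delivers the required $f^{v_1}$. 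The same identity is what makes the transport $f^v := f^{v_1} \circ \beta_{v,v_1}^*$ independent of the chosen $\beta_{v,v_1}$ and certifies property {\bf 3} globally.
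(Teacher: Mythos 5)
Your proof is correct, and while the direction {\it ii)} $\Rightarrow$ {\it i)} coincides with the paper's argument, your treatment of {\it i)} $\Rightarrow$ {\it ii)} takes a genuinely different route. The paper argues directly: it picks the special \emph{linear} response functions $f^v(X)=\sum_{t(a')=v,\, a'\sim a}\alpha(X_{s(a')})$, evaluates them at an indicator-like synchronous point supported on the block of $s(a)$, and deduces that for every arrow colour and every block the number of incoming arrows of that colour with source in that block is the same for $v_1$ and $v_2$; the bijection $\beta_{v_2,v_1}$ is then assembled combinatorially from these equal counts. You instead argue contrapositively with a single smooth, group-averaged bump function that separates the point $\pi_{v_1}x$ from the finite set $\{\gamma^*\pi_{v_2}x \mid \gamma\in\mathbb{G}_{v_2,v_1}\}$. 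The two approaches trade different technical steps: the paper's proof needs the counting-to-bijection step (a small matching argument it leaves implicit) but shows the stronger fact that robustness under \emph{linear} admissible maps already forces balance; yours avoids the matching entirely, but hinges on the orbit-disjointness observation — which does go through, since $\mathbb{G}_{v_1,v_1}$ is a genuine group, so its orbits partition the input space and it suffices to check $\pi_{v_1}x\neq\gamma^*\pi_{v_2}x$ for each $\gamma$, exactly what the failure of balance plus the distinct block values $y_k$ provide. Your construction also quietly handles the degenerate case $\mathbb{G}_{v_2,v_1}=\emptyset$ (cells of different colours in one block), where the separating set is empty and $f^{v_2}\equiv 0$ does the job. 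Both proofs implicitly assume the state spaces are nontrivial, so no ground is lost there.
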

\begin{proof}{\bf (Sketch)}
{\it ``ii)}\ $\Rightarrow$ {\it i)''} It follows from {\it ii)} that $\pi_{v_1}x= \beta_{v_2, v_1}^*(\pi_{v_2}x)$ for all $x\in {\rm Syn}_P$ and all $v_1\sim_P v_2$. As a result, by assumption {\bf 3},
$$f^{v_1}(\pi_{v_1}x) = f^{v_1}( \beta_{v_2, v_1}^* ( \pi_{v_2}x) ) = f^{v_2}(\pi_{v_2}x) \, .$$
This proves that $(\gamma_f^{\rm\bf N})_{v_1}(x)=(\gamma_f^{\rm\bf N})_{v_2}(x)$ for any $v_1\sim_P v_2$ and $x\in {\rm Syn}_P$. In other words, $\gamma_f^{\rm\bf N}({\rm Syn}_P)\subset {\rm Syn}_P$. Since this is true for every choice of $\{f^v\}_{v\in V}$, the synchrony is robust.

{\it ``i)} $\Rightarrow$ {\it ii)''} 
Let $v_1\sim_P v_2$ be fixed and choose an arrow $a$ with $t(a)=v_1$. Let us write $a'\sim a$ if the arrows $a'$ and $a$ have same colour. Finally, let $\alpha: E_{s(a)}\to E_{v_1}$ be any nonzero linear map. Then we define the special response functions
$$f^{v}(X):=\!\! \sum_{\tiny \begin{array}{cc} t(a')=v\\ a'\sim a \end{array}}\!\! \alpha\left(X_{s(a')}\right)\, .$$
Because elements of $\mathbb{G}$ preserve the colour of arrows, these response functions satisfy assumption {\bf 3}. We shall evaluate $(\gamma_f^{\rm\bf N})_{v_1}$ and $(\gamma_f^{\rm\bf N})_{v_2}$ at a point $x^*\in {\rm Syn}_P$ with $\alpha(x^*_{s(a)})=e \neq 0$ (and hence $\alpha(x^*_{v}) = e$ for all $v \sim_P s(a)$) and $\alpha(x^*_{v})=0$ for all $v\not \sim_P s(a)$. For such $x^*$,
$$(\gamma_f^{\rm\bf N})_{v_i}(x^*)=f^{v_i}( \pi_{v_i}x^*) = \sharp \{a'\in t^{-1}(v_i)\, |\, a'\sim a\ \mbox{and}\ s(a')\sim_P s(a) \, \}   \cdot e \, .$$
By assumption {\it i)}, it holds that $(\gamma_f^{\rm\bf N})_{v_1}(x^*)= (\gamma_f^{\rm\bf N})_{v_2}(x^*)$. It follows for all arrows $a$ that
$$\sharp \{a'\in t^{-1}(v_1)\, |\, a'\sim a\ \mbox{and}\ s(a')\sim_P s(a) \, \} = \sharp \{a'\in t^{-1}(v_2)\, |\, a'\sim a\ \mbox{and}\ s(a')\sim_P s(a) \, \} \, . $$
This implies that there is a bijection $\beta_{v_2, v_1}\in \mathbb{G}_{v_2,v_1}$ with the desired properties.
\end{proof}
Theorem \ref{balancedlemma} was recently generalised by DeVille and Lerman \cite{deville}. They formulate their result in the language of category theory and graph fibrations. 
 
 \begin{definition}\label{deffibration}
  A map $\phi:{\rm \bf N}_1\to {\rm \bf N}_2$ of networks is a {\it graph fibration} if \\ \vspace{-.1cm}\\
 \begin{tabular}{lp{11.5cm}}
 {\it i)} & it sends cells to cells of the same colour, arrows to arrows of the same colour, and the head and tail of every arrow $a_1\in {\rm \bf N}_1$ to the head and tail of $\phi(a_1)\in {\rm \bf N}_2$;\\
  {\it ii)} & for every cell $v_1 \in {\rm \bf N}_1$ and every arrow $a_2\in {\rm \bf N}_2$ ending at $\phi(v_1)$, there is a unique arrow $a_1\in \phi^{-1}(a_2)$ that ends at $v_1$. 
  \end{tabular}
  \end{definition}
Property {\it i)} simply requires that $\phi$ is a morphism of coloured directed graphs. Property {\it ii)} is the fibration property: it says that $\phi$ restricts to a colour-preserving bijection 
$$\left. \phi \right|_{t^{-1}(v_1)}: t^{-1}(v_1) \to t^{-1}(\phi(v_1))\, ,$$
between the arrows targeting any vertex $v_1\in{\rm \bf N}_1$ and those targeting its image $\phi(v_1)\in {\rm \bf N}_2$. 

When $\phi:{\rm \bf N}_1\to {\rm \bf N}_2$ is a graph fibration, we call  ${\rm \bf N}_2$ a {\it quotient} of ${\rm \bf N}_1$ and ${\rm \bf N}_1$ a {\it lift} of ${\rm \bf N}_2$. Despite this terminology, we will not require that $\phi$ is surjective.
Figure \ref{pictquotient} depicts quotients of networks {\bf A}, {\bf B} and {\bf C}, and the action of the corresponding graph fibrations on vertices. 
\begin{figure}[h]\renewcommand{\figurename}{\rm \bf \footnotesize Figure} 
\vspace{-.1cm}
\begin{center}
\begin{tikzpicture}[->, scale=1.6]
  	 \tikzstyle{vertextype1}=[circle, draw, minimum size=20pt,inner sep=1pt]
	 \tikzstyle{vertextype2} = [circle, draw, fill=yellow, minimum size=20pt,inner sep=1pt]
	 \tikzstyle{vertextype3} = [circle, draw, fill=yellow!25, minimum size=20pt,inner sep=1pt]
	  \tikzstyle{vertextype4} = [circle, draw, fill=yellow!62, minimum size=20pt,inner sep=1pt]
	 \tikzstyle{edgetype1} = [->, draw,line width=3pt,-,red!50]
	 \tikzstyle{edgetype2} = [draw,thick,-,black]
	 \node at (1.3,-1.15) {\Large {\bf C}};
	 \node[vertextype1] (v3) at (1.8,-1.4) {$v_3$};
	 \node[vertextype2] (v2) at (1.3, -.7) {$v_2$};
	 \node[vertextype2] (v1) at (.8,-1.4) {$v_1$};
	 \node at (1.3,1.6) {\Large {\bf A}};
	 \node[vertextype2] (v3e) at (.8,1.3) {$v_1$};
	 \node[vertextype2] (v2e) at (1.3, 2) {$v_2$};
	 \node[vertextype1] (v1e) at (1.8,1.3) {$v_3$};
	 \node at (4.5,0) {\Large {\bf E}};
	 \node[vertextype2] (v1b) at (4,-.3) {$v_1$};
	 \node[vertextype1] (v2b) at (5,-.3) {$v_2$};
	  \node at (4.5,1.7) {\Large {\bf D}};
	 \node[vertextype2] (v1c) at (4,1.2) {$v_1$};
	 \node[vertextype1] (v2c) at (5,1.2) {$v_2$};
	  \node at (7.1,1) {\Large {\bf F}};
	  \node[vertextype3] (v) at (7.1,.5) {$v_1$};
	  
	   \node at (1.3,.2) {\Large {\bf B}};
	 \node[vertextype2] (v2f) at (0.8,0) {$v_1$};
	 \node[vertextype2] (v3f) at (1.3, .7) {$v_2$};
	 \node[vertextype1] (v1f) at (1.8,0) {$v_3$};

	 \draw [->,decorate,decoration=snake, thick] (2,1.8) -- (3.33,1.4) node [above, midway, sloped] {{\footnotesize $v_1,v_2\mapsto v_1$}} node [below, midway, sloped] {{\footnotesize $v_3\mapsto v_2$}}; 
	 \draw [->,decorate,decoration=snake, thick] (2,.4) -- (3.33,.8) node [above, midway, sloped] {{\footnotesize $v_1,v_2\mapsto v_1$}} node [below, midway, sloped] {{\footnotesize  $v_3\mapsto v_2$}};
	  \draw [->,decorate,decoration=snake, thick] (2,-1) -- (3.33,-.6) node [above, midway, sloped] {{\footnotesize $v_1,v_2\mapsto v_1$}} node [below, midway, sloped] {{\footnotesize  $v_3\mapsto v_2$}};
	 \draw [->,decorate,decoration=snake, thick] (5.7,-.2) -- (6.54,.1) node [above, midway, sloped] {{\footnotesize $v_1,v_2\mapsto v_1$}};
	  \draw [->,decorate,decoration=snake, thick] (5.7,1.1) -- (6.54,.8) node [above, midway, sloped] {{\footnotesize $v_1,v_2\mapsto v_1$}};
	\path[]
	(v1) edge [thick, blue] node {} (v3)
	(v2) edge [loop left, blue, thick] node {} (v2)
	(v2) edge [thick, blue] node {} (v1)
	(v3) edge [loop right, red, dashed, thick] node {} (v3)
	(v3) edge [thick, red, dashed] node {} (v2)
	(v3) edge [bend left, red, thick, dashed] node {} (v1)
	(v2e) edge [thick, blue] node {} (v1e)
	(v3e) edge [loop left, blue, thick] node {} (v3e)
	(v3e) edge [thick, blue] node {} (v2e)
	(v3e) edge [loop right, red, dashed, thick] node {} (v3e)
	(v3e) edge [thick, bend left, red, dashed] node {} (v2e)
	(v3e) edge [thick, dashed, bend right, red] node {} (v1e)
	(v1b) edge [thick, blue, loop left] node {} (v1b)
	(v1b) edge [blue, thick] node {} (v2b)
	(v2b) edge [loop right, red, dashed, thick] node {} (v1b)
	(v2b) edge [thick, bend left, red, dashed] node {} (v1b)
	(v1c) edge [thick, blue, loop left] node {} (v1c)
	(v1c) edge [blue, thick, bend left] node {} (v2c)
	(v1c) edge [loop right, red, dashed, thick] node {} (v1c)
	(v1c) edge [thick, bend right, red, dashed] node {} (v2c)
	(v) edge [blue, thick, loop left] node {} (v)
	(v) edge [loop right, red, dashed, thick] node {} (v)
	(v2f) edge [thick, blue] node {} (v3f)
	(v2f) edge [loop left, blue, thick] node {} (v2f)
	(v3f) edge [thick, blue] node {} (v1f)
	(v3f) edge [loop right, red, dashed, thick] node {} (v3f)
	(v3f) edge [thick, red, dashed, bend right] node {} (v2f)
	(v3f) edge [thick, dashed, red, bend left] node {} (v1f);
	 \end{tikzpicture}
 \caption{\footnotesize {\rm Graph fibrations that explain the robust synchronies of networks {\bf A}, {\bf B} and {\bf C}.}}
  \vspace{-.3cm}
\label{pictquotient}
\end{center}
 \end{figure}
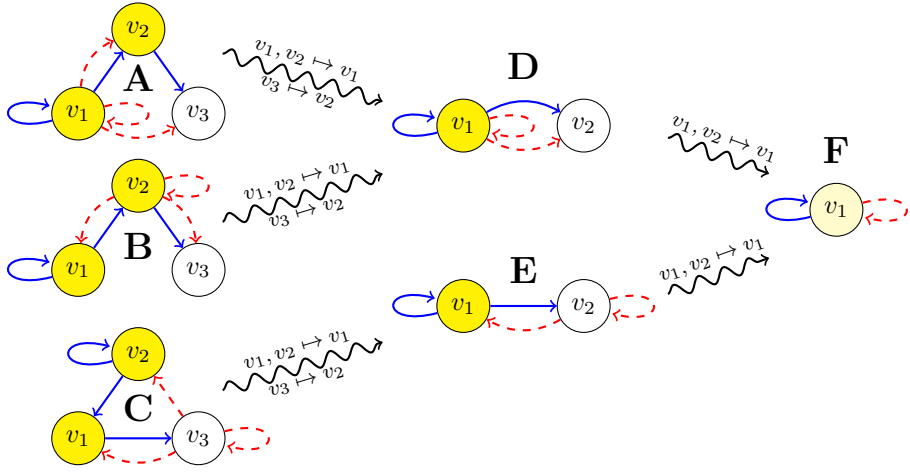
 \\
\noindent 
The dynamical relevance of graph fibrations is explained by the following theorem from \cite{deville}.
 
 \begin{theorem}[DeVille \& Lerman]\label{devilletheorem1}
Let $\phi: {\rm \bf N}_1\to {\rm \bf N}_2$ be a graph fibration. 
Define the map $\phi^*: E_{\rm \bf N_2} \to E_{\rm \bf N_1}$ between the phase spaces of networks ${\rm \bf N}_2$ and ${\rm \bf N}_1$ by
$$(\phi^*y)_v := y_{\phi(v)}\, .$$
Then $\phi^*$ sends every solution $y(t)$ of the dynamics of network ${\rm \bf N}_2$ to a solution $x(t):=\phi^*y(t)$ of the dynamics of network ${\rm \bf N}_1$, that is
$$\phi^* \circ \gamma_f^{\rm\bf N_2} = \gamma_f^{\rm\bf N_1} \circ \phi^*\, .$$
The solution $x(t)=\phi^*y(t)$ has the robust synchrony $x_{v_1}(t)=x_{v_2}(t)$ when $\phi(v_1)=\phi(v_2)$. Moreover, every robust synchrony of ${\rm \bf N}_1$ arises from a graph fibration in this way. 
\end{theorem}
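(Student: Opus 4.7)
The statement has three ingredients: the conjugacy identity $\phi^*\circ \gamma_f^{\rm\bf N_2}=\gamma_f^{\rm\bf N_1}\circ\phi^*$, the observation that $\phi^*y$ is always synchronous along fibres of $\phi$, and the converse direction that every robust synchrony arises this way. My plan is to handle them in this order.

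For the conjugacy, I would compute both sides componentwise at a vertex $v\in{\rm\bf N}_1$. The left-hand side equals $f^{\phi(v)}(\pi_{\phi(v)}y)$ and the right-hand side equals $f^v(\pi_v(\phi^*y))$. Property~\emph{i)} of Definition~\ref{deffibration} says $\phi(v)$ and $v$ have the same colour, so the response functions $f^v$ and $f^{\phi(v)}$ are prescribed by the same colour datum. Property~\emph{ii)} says $\phi$ restricts to a colour-preserving bijection $\phi|_{t^{-1}(v)}\colon t^{-1}(v)\to t^{-1}(\phi(v))$; this bijection is therefore an element of the symmetry groupoid $\mathbb{G}_{\phi(v),v}$. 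A direct check, using $s(\phi(a))=\phi(s(a))$ from~\emph{i)} and the definition of $\phi^*$, yields
\[
\bigl(\phi|_{t^{-1}(v)}\bigr)^*\!(\pi_{\phi(v)}y)=\pi_v(\phi^*y).
\]
Applying the groupoid invariance~\textbf{3} of response functions to the element $\phi|_{t^{-1}(v)}$ of $\mathbb{G}$ then converts $f^v\circ (\phi|_{t^{-1}(v)})^*$ into $f^{\phi(v)}$, delivering the identity. The synchrony statement is then immediate: if $\phi(v_1)=\phi(v_2)$ then $(\phi^*y)_{v_1}=y_{\phi(v_1)}=y_{\phi(v_2)}=(\phi^*y)_{v_2}$ for every $y(t)$, and in particular for every solution.

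For the converse, let $P=\{P_1,\dots,P_r\}$ be any balanced partition of ${\rm\bf N}_1$ describing a robust synchrony (invoking Theorem~\ref{balancedlemma}). I would construct a quotient network ${\rm\bf N}_2={\rm\bf N}_1/P$ as follows: pick a representative $v_i^*\in P_i$ in each class, take the vertices of ${\rm\bf N}_2$ to be the classes $P_i$ with the common colour of their members, and take the arrows targeting $P_i$ to be (a colour-preserving copy of) $t^{-1}(v_i^*)$, where an arrow $a$ with $s(a)\in P_j$ becomes an arrow from $P_j$ to $P_i$ of the same colour. Using balancedness, fix for each $v\in P_i$ a $\beta_{v_i^*,v}\in\mathbb{G}_{v_i^*,v}$ with $s(\beta_{v_i^*,v}(a))\sim_P s(a)$ for all $a\in t^{-1}(v)$. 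Define $\phi$ on vertices by $\phi(v)=P_i$ if $v\in P_i$, and on arrows by sending $a\in t^{-1}(v)$ to the arrow of ${\rm\bf N}_2$ corresponding to $\beta_{v_i^*,v}(a)\in t^{-1}(v_i^*)$.

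I would then verify that $\phi$ satisfies~\emph{i)} and~\emph{ii)} of Definition~\ref{deffibration}: colour preservation on arrows is built in from the construction of ${\rm\bf N}_2$; heads are mapped correctly by definition; tails are mapped correctly because $s(\beta_{v_i^*,v}(a))\sim_P s(a)$, so both $s(a)$ and $s(\beta_{v_i^*,v}(a))$ lie in the same class, and the image arrow in ${\rm\bf N}_2$ indeed starts at that class; and the restriction to $t^{-1}(v)$ is a bijection onto $t^{-1}(P_i)$ because $\beta_{v_i^*,v}$ is a bijection $t^{-1}(v)\to t^{-1}(v_i^*)$ and the latter is identified with $t^{-1}(P_i)$ by construction. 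Finally, $\phi(v_1)=\phi(v_2)$ iff $v_1\sim_P v_2$, so the synchrony space of $P$ coincides with the image of $\phi^*$, proving that the given robust synchrony arises from the graph fibration $\phi$.

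The only delicate step is the last paragraph: the construction of ${\rm\bf N}_2$ requires non-canonical choices (the representatives $v_i^*$ and the groupoid elements $\beta_{v_i^*,v}$), and one has to check that the resulting $\phi$ is a well-defined graph fibration irrespective of these choices. The conjugacy identity itself is essentially a bookkeeping exercise once one recognises $\phi|_{t^{-1}(v)}$ as an element of $\mathbb{G}$.
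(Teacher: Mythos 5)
Your proposal is correct and follows essentially the same route as the paper: the conjugacy is obtained by recognising $\phi|_{t^{-1}(v)}$ as a colour-preserving bijection of input sets, verifying $\bigl(\phi|_{t^{-1}(v)}\bigr)^*\circ\pi_{\phi(v)}=\pi_v\circ\phi^*$, and invoking the groupoid invariance {\bf 3}; the converse is the same quotient construction from a balanced partition via chosen representatives and the $\beta_{v_j,v}$ supplied by Theorem \ref{balancedlemma}. The only cosmetic caveat is that $\mathbb{G}$ was defined for a single network, so calling $\phi|_{t^{-1}(v)}$ an element of $\mathbb{G}_{\phi(v),v}$ is a slight abuse, but the paper applies condition {\bf 3} across the two networks in exactly the same implicit way.
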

\begin{proof}
Let ${\rm \bf N}_1=\{A_1\rightrightarrows_{t_1}^{s_1} V_1\}$ and ${\rm \bf N}_2=\{A_2\rightrightarrows_{t_2}^{s_2} V_2\}$ be networks and let $\phi:{\rm \bf N}_1\to{\rm \bf N}_2$ be a graph fibration. This implies that $s_2\circ \left.\phi\right|_{A_1} = \left.\phi\right|_{V_1}\circ s_1$ and $t_2\circ \left.\phi\right|_{A_1}= \left.\phi\right|_{V_1} \circ t_1$. Moreover, recall that for every $v\in V_1$ the restriction   
 $$\left. \phi \right|_{t_1^{-1}(v)}: t_1^{-1}(v) \to t_2^{-1}(\phi(v))\, $$
 is a colour preserving bijection, yielding an identification between the inputs of $v$ and $\phi(v)$
$$\left(  \left. \phi \right|_{t_1^{-1}(v)}  \right)^*:  \prod_{t_2(a) =\phi(v)} E_{s_2(a)} \to \prod_{t_1(a) =v} E_{s_1(a)}\, .$$
This map satisfies $\left( \left( \left. \phi \right|_{t_1^{-1}(v)} \right )^*Y\right)_{s_1(a)} := Y_{s_2(\phi(a))} = y_{s_2(\phi(a))} = y_{\phi(s_1(a))} =(\phi^*y)_{s_1(a)}$ for any choice of input variables $Y=\pi_{\phi(v)}(y)$ of cell $\phi(v)$.
In other words, 
$$\left(  \left. \phi \right|_{t_1^{-1}(v)}  \right)^*\circ \pi_{\phi(v)} = \pi_v \circ \phi^*\, .$$
Finally, because $v$ and $\phi(v)$ have the same colour, it holds that $f^{v} \circ  \left( \left. \phi \right|_{t_1^{-1}(v)}\right)^*  = f^{\phi(v)}$.

After these technical remarks, let   $y(t)\in E_{{\rm\bf N}_2}$ be a solution of the dynamics of ${\rm \bf N}_2$, that is $\dot y_w(t)=(\gamma_f^{{\rm\bf N}_2})_w(y)=f^w(\pi_w y(t))$ for all cells $w$ of ${\rm \bf N}_2$, and let $x(t):=\phi^*y(t)\in E_{{\rm\bf N}_1}$.
Then 
 \begin{align}\nonumber
  \dot x_v(t)  & = \dot y_{\phi(v)}(t) = f^{\phi(v)} \left(\pi_{\phi(v)}y(t) \right) =
  \left(  f^v \circ \left( \left. \phi \right|_{t_1^{-1}(v)}\right)^* \circ \pi_{\phi(v)}\right) ( y(t) )  \\ \nonumber & =  \left( f^{v}\circ \pi_{v}\circ \phi^*\right)(y(t))  = f^v(\pi_v(x(t))) = (\gamma_f^{{\rm\bf N}_1})_v(x(t))\, .
\end{align}
This proves that $\phi^*$ sends solutions to solutions and hence that $\phi^* \circ \gamma_f^{\rm\bf N_2} = \gamma_f^{\rm\bf N_1} \circ \phi^*$. 

To prove that every robust synchrony arises from a graph fibration, assume that $P=\{P_1, \ldots, P_r\}$ is a balanced partition of the cells of the network ${\rm \bf N}=\{A\rightrightarrows_t^s V\}$. We now define a new network ${\rm \bf  N}^P=\{A^P \rightrightarrows_t^s V^P\}$ with cells $V^P:=\{v^P_1, \ldots, v^P_r\}$. 

The arrows $A^P$ of ${\rm \bf N}^P$ are constructed by choosing, for every cell $v^P_j \in V^P$, one arbitrary but distinguished cell $v_j\in P_j$. We then construct, for every arrow $a_j\in t^{-1}(v_j)$, a corresponding arrow $a_j^P\in A^P$ with the same colour as $a_j$. We require that the source of $a_j^P$ is $v^P_i$ if $s(a_j)\in P_i$ and that the target of $a_j^P$ is $v^P_j$. 

From the fact that $P$ is balanced, it follows that ${\rm \bf N}^P$ is a quotient of ${\rm \bf N}$. A quotient map $\phi: {\rm \bf N}\to{\rm \bf N}^P$ can be constructed by letting 
$\phi(v):=v_j^P$ for 
 every cell $v\in P_j$, and by choosing for each $v\in P_j$ one of the $\beta_{v_j, v} \in \mathbb{G}_{v_j, v}$ of part {\it ii)} of Theorem \ref{balancedlemma}, to define $\phi(a) := (\beta_{v_j,v}(a))^P$ for every  $a\in t^{-1}(v)$. It is clear from property {\it ii)} of Theorem \ref{balancedlemma} that this $\phi$ is a graph fibration and that ${\rm im}\, \phi^*={\rm Syn}_{P}$.
\end{proof}
More so than the rather combinatorial Theorem \ref{balancedlemma}, the result of DeVille and Lerman provides a geometric explanation of the occurrence of synchrony: robust synchrony is a consequence of the existence of graph fibrations and of the resulting conjugacies of dynamical systems. Figure \ref{pictquotient} depicts the graph fibrations that are responsible for the robust synchronies of the networks {\bf A}, {\bf B} and {\bf C} that were discussed in Section \ref{examplessection}.
\begin{remark}\label{contravariant}
Let $\phi: {\rm \bf N}_1\to {\rm \bf N}_2$ and $\psi: {\rm \bf N}_2\to {\rm \bf N}_3$ be graph fibrations and 
$$\phi^*: E_{{\rm \bf N}_2} \to E_{{\rm \bf N}_1}\ \mbox{and}\ \psi^*:E_{{\rm \bf N}_3} \to E_{{\rm \bf N}_2}$$ the conjugacies resulting from Theorem \ref{devilletheorem1}. Then  $\psi\circ\phi: {\rm\bf N}_1\to{\rm\bf N}_3$ is also a graph fibration. 
Moreover, for $z \in E_{{\rm \bf N}_3}$ we have
$((\psi\circ \phi)^*z)_v = z_{(\psi\circ\phi)(v)} = z_{\psi(\phi(v))} = (\psi^*z)_{\phi(v)} = (\phi^*(\psi^*z))_v$.
This proves that
$$(\psi\circ \phi)^* = \phi^* \circ \psi^*\, .$$
Alternatively, one may express this  by saying that the map $*:\phi\mapsto \phi^*$ determines a contravariant functor from the category of networks to the category of dynamical systems. See \cite{deville} for more details on the categorical approach to network dynamics.
\end{remark}
We will use the following simple remark later:
\begin{proposition}\label{injsurjremark}
 When $\phi:{\rm \bf N}_1\to {\rm \bf N}_2$ is surjective, then  $\phi^*: E_{\rm \bf N_2} \to E_{\rm \bf N_1}$ is injective. When $\phi$ is injective, then $\phi^*$ is surjective. 
 \end{proposition}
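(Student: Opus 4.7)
The plan is to prove both statements by direct unpacking of the definition $(\phi^*y)_v = y_{\phi(v)}$; there is no genuine obstacle here, just two short set-theoretic arguments, plus a small consistency check about state spaces.

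For the first statement, assume $\phi:{\rm\bf N}_1\to{\rm\bf N}_2$ is surjective on vertices, and suppose $y,y'\in E_{{\rm\bf N}_2}$ satisfy $\phi^*y=\phi^*y'$. For each $w\in V_2$, pick by surjectivity some $v\in V_1$ with $\phi(v)=w$. Then
\[
y_w \;=\; y_{\phi(v)} \;=\; (\phi^*y)_v \;=\; (\phi^*y')_v \;=\; y'_{\phi(v)} \;=\; y'_w,
\]
so $y=y'$ and $\phi^*$ is injective.

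For the second statement, assume $\phi$ is injective on vertices, and let $x\in E_{{\rm\bf N}_1}$ be arbitrary. We want to produce $y\in E_{{\rm\bf N}_2}$ with $\phi^*y=x$, i.e.\ with $y_{\phi(v)}=x_v$ for every $v\in V_1$. Since $\phi$ preserves the colouring of cells, and since our standing convention is that cells of the same colour share the same state space, we have $E_{\phi(v)}=E_v$, so $x_v$ legitimately lies in $E_{\phi(v)}$. Injectivity of $\phi$ then means that the prescription
\[
y_{\phi(v)} \;:=\; x_v \qquad (v\in V_1)
\]
unambiguously defines $y$ on the subset $\phi(V_1)\subset V_2$. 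For vertices $w\in V_2\setminus \phi(V_1)$, we simply choose any element of $E_w$. The resulting $y\in E_{{\rm\bf N}_2}=\prod_{w\in V_2}E_w$ satisfies $(\phi^*y)_v=y_{\phi(v)}=x_v$ for all $v\in V_1$, so $\phi^*y=x$, showing $\phi^*$ is surjective.

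The only point requiring care is the identification $E_{\phi(v)}=E_v$, which is why the construction in the injective case is well-posed; once this is noted, both implications are essentially one line each. Neither direction needs the fibration property {\it ii)} of Definition~\ref{deffibration}—only the cell-level part of property {\it i)} enters—so this proposition really is a comment about the underlying vertex map of $\phi$.
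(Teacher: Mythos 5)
Your proof is correct and follows essentially the same route as the paper's: injectivity of $\phi^*$ from surjectivity of $\phi$ by reading off components, and surjectivity of $\phi^*$ from injectivity of $\phi$ by defining $y$ on $\phi(V_1)$ and arbitrarily elsewhere. The extra remark that $E_{\phi(v)}=E_v$ (so the prescription is well-posed) is a small but legitimate addition the paper leaves implicit.
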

 \begin{proof}
This all follows directly from the definition $(\phi^*x)_v :=x_{\phi(v)}$. 

Assume for instance that $\phi^*y=\phi^*Y$. Then $y_{\phi(v)}=Y_{\phi(v)}$ for all cells $v$ of ${\rm \bf N}_1$. When $\phi$ is surjective, this implies that $y_w=Y_w$ for all cells $w$ of ${\rm\bf N}_2$. Thus, $y=Y$ and $\phi^*$ is injective.

When $\phi$ is injective, let $x\in E_{{\rm\bf N}_1}$ be given and choose any $y\in E_{{\rm\bf N}_2}$ satisfying $y_{w}=x_{v}$ whenever $w=\phi(v)$. Injectivity makes this possible. Then $\phi^*y=x$ and $\phi^*$ is surjective. 
\end{proof}


\section{Homogeneous networks} \label{homnetsection}
 We shall restrict our attention to a rather simple class of networks for a while:
\begin{definition}\label{defhomogeneous}
A {\it homogeneous network} is a network with vertices of one single colour, in which the arrows that target one vertex all have a different colour.
\end{definition}
A network ${\rm\bf N}=\{A\rightrightarrows_t^s V\}$ is homogeneous  precisely if $\sharp \,\mathbb{G}_{v_2, v_1}=1$ for all pairs $v_1, v_2\in V$. In particular, every cell of such a network has the same phase space $E_v=E$ and responds in the same way to its incoming signals. Also, signals of a different colour may have a different effect on a cell. Homogeneous networks have the advantage that they allow for a rather simple algebraic treatment. 
For example, one calls the number of incoming arrows of a cell the ``valency'' of that cell. Note that homogeneous networks have a single valency. A homogeneous network of valency $m$ can thus conveniently be described by $m$ ``input maps'' 
$$\sigma_1, \ldots, \sigma_m: V \to V ,$$
in which $\sigma_j(v)$ is the source of the unique arrow of colour $j$ that targets vertex $v$. It is clear that $\pi_vx=(x_{\sigma_1(v)}, \ldots, x_{\sigma_m(v)})$ 
and that a choice of response function $f:E^m \to E$ leads to a homogeneous network map $\gamma_f^{\rm\bf N}: E_{\rm\bf N} \to E_{\rm\bf N}$ of the form
\begin{equation}\label{homogeneousequation}
(\gamma_f^{\rm\bf N})_v(x) = f\left(x_{\sigma_1(v)}, \ldots, x_{\sigma_m(v)}\right)\ \mbox{for all}\ v\in V\, .
\end{equation}
To guarantee that every cell notices its own state, we shall assume from now on that
$$\sigma_1 = {\rm Id}_{V}\, .$$
 Formula (\ref{homogeneousequation}) moreover shows that, without loss of generality, we can assume that all the $\sigma_j$'s are different: if $\sigma_i=\sigma_j$ for $i\neq j$, then the arrows of colours $i$ and $j$ can be identified, and $f$ can then be redefined to depend on less variables.
\begin{example}\label{inputmapsexample}
Networks {\bf A}, {\bf B} and {\bf C} of Figure \ref{pict1} are examples of homogeneous networks with $3$ cells of valency $3$. The maps $\sigma_1, \sigma_2, \sigma_3$ are given in this case by
 \begin{align}\nonumber
 \begin{array}{c|ccc} {\rm \bf A} & v_1 & v_2 & v_3 \\ \hline 
\sigma_1 & v_1 & v_2  & v_3   \\
\bl{\sigma_2} &  \bl{v_1} & \bl{v_1} & \bl{v_2}  \\
\ro{\sigma_3} & \ro{v_1} & \ro{v_1} & \ro{v_1}  
\end{array} \hspace{.5cm}
   \begin{array}{c|ccc} {\rm \bf B} & v_1 & v_2 & v_3 \\ \hline 
\sigma_1 & v_1 & v_2  & v_3   \\
\bl{\sigma_2} &  \bl{v_1} & \bl{v_1} & \bl{v_2}  \\
\ro{\sigma_3} & \ro{v_2} & \ro{v_2} & \ro{v_2}  
\end{array}
 \hspace{.5cm}
\begin{array}{c|ccc} {\rm \bf C} & v_1 & v_2 & v_3 \\ \hline 
\sigma_1  & v_1 & v_2 & v_3   \\
\bl{\sigma_2} &  \bl{v_2} & \bl{v_2} & \bl{v_1}  \\
\ro{\sigma_3}  & \ro{v_3} & \ro{v_3} & \ro{v_3}   
\end{array} 
 \ .
\end{align}
Thus, $\sigma_1=$``arrows from every cell to itself'', $\sigma_2 =$ ``all blue arrows" and $\sigma_3 =$ ``all red arrows". Figure \ref{pict1} does not depict the arrows corresponding to $\sigma_1$. The figure also displays the homogeneous network differential equations $\dot x=\gamma_f^{\rm \bf A}(x), \dot x=\gamma_f^{\rm \bf B}(x)$ and $\dot x=\gamma_f^{\rm \bf C}(x)$.
\end{example}
The following simple proposition characterises graph fibrations of homogeneous networks.
\begin{proposition}\label{obviousprop}
Let ${\rm \bf N}_1=\{A_1\rightrightarrows_{t_1}^{s_1} V_1\}$ and ${\rm \bf N}_2=\{A_2\rightrightarrows_{t_2}^{s_2} V_2\}$ be homogeneous networks of valency $m$, respectively with input maps 
$$\sigma_1^{(1)}, \ldots, \sigma_m^{(1)}: V_1 \to V_1\ \mbox{and} \ \sigma_1^{(2)}, \ldots, \sigma_m^{(2)}: V_2 \to V_2\, .$$ 
Then $\phi: {\rm \bf N}_1\to {\rm \bf N}_2$ is a graph fibration if and only if
$$\left.\phi\right|_{V_1} \circ \sigma_j^{(1)} = \sigma_j^{(2)} \circ \left. \phi\right|_{V_1}\ \mbox{for all colours} \ 1\leq j \leq m\, .$$
\end{proposition}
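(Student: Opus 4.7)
The plan is to exploit the defining property of homogeneity: every vertex has exactly one incoming arrow of each colour $j \in \{1,\ldots,m\}$, so the colour-$j$ arrow targeting any vertex $v$ is uniquely specified and its source is by definition $\sigma_j(v)$. Both implications then reduce to chasing the uniquely determined colour-$j$ arrow through $\phi$.

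For the forward implication, assume $\phi$ is a graph fibration. Fix $v \in V_1$ and a colour $j$, and let $a \in t_1^{-1}(v)$ be the unique arrow of colour $j$ targeting $v$, so that $s_1(a) = \sigma_j^{(1)}(v)$. By property \emph{i)} of Definition \ref{deffibration} the arrow $\phi(a)$ has colour $j$ and satisfies $t_2(\phi(a)) = \phi(v)$, hence by homogeneity of $\mathbf{N}_2$ it must be the unique colour-$j$ arrow targeting $\phi(v)$; in particular $s_2(\phi(a)) = \sigma_j^{(2)}(\phi(v))$. On the other hand, property \emph{i)} also gives $s_2(\phi(a)) = \phi(s_1(a)) = \phi(\sigma_j^{(1)}(v))$, and comparing the two expressions yields the desired identity $\phi \circ \sigma_j^{(1)} = \sigma_j^{(2)} \circ \phi$ on $V_1$.

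For the converse, assume the commutation relations hold for every $j$. Since a morphism of networks must act on arrows as well as vertices, the first step is to observe that in the homogeneous setting this action is forced: one is obliged to send the unique colour-$j$ arrow of $t_1^{-1}(v)$ to the unique colour-$j$ arrow of $t_2^{-1}(\phi(v))$. With this definition, property \emph{i)} of Definition \ref{deffibration} holds because colours and targets are preserved by construction and sources match by the assumed commutation $\phi(\sigma_j^{(1)}(v)) = \sigma_j^{(2)}(\phi(v))$. Property \emph{ii)} becomes tautological: the restriction $\phi|_{t_1^{-1}(v)} \to t_2^{-1}(\phi(v))$ is the colour-preserving bijection that pairs off the unique colour-$j$ representatives on each side, so every arrow ending at $\phi(v)$ has exactly one preimage ending at $v$.

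I do not expect a genuine obstacle; the argument is essentially a bookkeeping exercise. The only point requiring care is to be explicit about the fact that a graph fibration is a map on both cells and arrows, but in the homogeneous setting the action on arrows is uniquely determined by the action on vertices, so the proposition correctly characterises graph fibrations in terms of data on $V_1$ alone.
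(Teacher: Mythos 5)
Your argument is correct and is exactly the reasoning the paper compresses into its one-line proof ("it is obvious from Definitions \ref{deffibration} and \ref{defhomogeneous} that $\phi$ must send $\sigma_j^{(1)}(v)$ to $\sigma_j^{(2)}(\phi(v))$, and it is a graph fibration if it does so"); you have simply made explicit the chase through the unique colour-$j$ arrow and the forced action on arrows. No difference in approach, and no gap.
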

\begin{proof}
It is obvious from Definitions \ref{deffibration} and \ref{defhomogeneous} that $\phi$ must send $\sigma_j^{(1)}(v)$ to $\sigma_j^{(2)}(\phi(v))$. Moreover, it is a graph fibration if it does so.
\end{proof}
\begin{remark}\label{balancedremark}
It is not hard to prove (see for example Proposition 7.2 in \cite{CCN}) that a partition $V =P_1\cup \ldots \cup P_r$ of the cells of a homogeneous network ${\rm \bf N}$ with input maps $\sigma_1, \ldots, \sigma_m$ is balanced if and only if for all $1\leq j \leq m$ and $1\leq k\leq r$ there is an $1\leq l \leq r$ such that
$$\sigma_j(P_k)\subset P_l\, .$$ 
The input maps then descend to maps on the partition. In fact, we can construct a new homogeneous network ${\rm\bf N}^P$ with cells
$\{v_1^P, \ldots, v_r^P\}$ and input maps $\sigma_1^{P}, \ldots, \sigma_m^{P}$ that satisfy  $$\sigma_j^P(v_k^P)=v_l^P\ \mbox{if and only if} \ \sigma_j(P_k)\subset P_l\, .$$
By definition, the map of vertices
$$\phi: V \to \{v_1^P, \ldots, v_r^P\} \ \mbox{defined by}\ \phi(v) = v^P_j \ \mbox{if and only if}\ v\in P_j $$ 
then satisfies $\phi \circ \sigma_j = \sigma_j^{P} \circ \phi\ \mbox{for all} \ 1\leq j \leq m$ and thus extends to a graph fibration. This confirms that ${\rm\bf N}^P$ is a quotient of ${\rm\bf N}$. 
\end{remark}
``Nonhomogeneous networks'', which have different cell types but no interchangeable inputs, can be described in similar way \cite{CCN}. Although the notation is heavier, the results of this paper remain true for such networks, see Section \ref{nonhomogeneoussection}. 
Networks with a nontrivial symmetry groupoid, in which certain cells receive several arrows of the same colour, can not be described by a unique collection of input maps. Some results in this paper therefore do not have an obvious generalisation to such networks.

 \section{The fundamental network}\label{fundamentalsection}
In this section we define, for every homogeneous network, the lift with self-fibrations mentioned in the introduction. Recall that every homogeneous network ${\rm \bf N}=\{A\rightrightarrows_t^s V \}$ can be described by input maps $\sigma_1, \ldots, \sigma_m:V\to V$. In general, the composition $\sigma_j\circ\sigma_k$ need not be equal to any $\sigma_i$, but there does exist a smallest collection 
$$\Sigma_{\rm \bf N} = \{\sigma_1, \ldots, \sigma_m, \ldots, \sigma_n\}$$ 
that contains $\sigma_1, \ldots, \sigma_m$ and is closed under composition. This $\Sigma_{\rm \bf N}$ is unique up to renumbering its elements. By definition, it is a semigroup (composition of maps being the semigroup operation) with unit (i.e. a ``monoid''), where we recall our assumption that $\sigma_1={\rm Id}_{V}$. 
\begin{remark}\label{semigroupremark}
One aspect of the relevance of $\Sigma_{\rm \bf N}$ is easy to explain. Let  $1\leq j_1, \ldots j_q \leq m$ be a sequence of colours. Then there is a path in {\bf N} from cell $(\sigma_{j_1}\circ\ldots \circ \sigma_{j_q})(v)$ to cell $v$, consisting of a sequence of arrows of colours $j_1, \ldots, j_q$ respectively. Cell $(\sigma_{j_1}\circ\ldots \sigma_{j_q})(v)$ thus acts ``indirectly'' as an input of cell $v$. Because $\Sigma_{\rm \bf N}$ is closed under composition, the set 
$$V_{(v)}:= \{ \sigma_j(v)\, |\, \sigma_j\in\Sigma_{\rm\bf N} \}$$
is equal to the set of vertices in $\bf N$ from which there is a path to $v$. Moreover, $\Sigma_{\rm \bf N}$ determines all the sets $V_{(v)}$ (with $v\in V$) simultaneously. Nevertheless, it will become clear that much more information is contained in the product structure of $\Sigma_{\rm \bf N}$.
\end{remark}
We are now ready to define another homogeneous network as follows:
\begin{definition}
Let ${\rm \bf N}$ be a homogeneous network with input maps $\sigma_1, \ldots, \sigma_m$ and let $\Sigma_{\rm \bf N}$ be the above semigroup. 
The {\it fundamental network}
$\widetilde {\rm \bf N}$ of {\bf N} is the homogeneous network with vertex set $\Sigma_{\rm \bf N}$ and input maps $\widetilde \sigma_1,  \ldots, \widetilde \sigma_m$ defined by
$$\widetilde \sigma_k(\sigma_j):= \sigma_{k}\circ\sigma_j\ \mbox{for}\ 1\leq k\leq m \, .$$ 
In other words, $\widetilde {\rm \bf N}$ contains an arrow of colour $k$ from $\sigma_i$ to $\sigma_j$ if and only if $\sigma_i=\sigma_k\circ \sigma_j$. 
\end{definition} 
The map $\widetilde \sigma_k:\Sigma_{\rm \bf N}\to\Sigma_{\rm \bf N}$ encodes the left-multiplicative behaviour of $\sigma_k\in \Sigma_{\rm \bf N}$. Thus, the fundamental network is   a graphical representation of the semigroup $\Sigma_{\rm \bf N}$ together with its generators $\sigma_1, \ldots, \sigma_m$. Such a graphical representation is   called a {\it Cayley graph}.
Note that the fundamental network $\widetilde {\rm\bf N}$ of {\bf N} can easily be constructed from the product table of $\Sigma_{\rm \bf N}$. 
\begin{example}\label{fundamentalexample}
Recall the homogeneous networks {\bf A}, {\bf B} and {\bf C} of Figure \ref{pict1} and their input maps given in Example \ref{inputmapsexample}.  In network {\bf A}, $\sigma_2^2 = \sigma_3^2  = \sigma_3\circ \sigma_2=\sigma_2\circ\sigma_3=\sigma_3$. Hence
$$\Sigma_{\rm \bf A}=\{\sigma_1, \sigma_2, \sigma_3\}$$ 
is already a semigroup. In network {\bf B}, on the other hand, $\sigma_2^2\neq \sigma_{1,2,3}$, so the collection $\{\sigma_1, \sigma_2, \sigma_3\}$ needs to be extended
to obtain a collection
$$\Sigma_{\rm \bf B}=\{\sigma_1, \sigma_2, \sigma_3, \sigma_4\}\ \mbox{with}\ \sigma_4=\sigma_2^2\, $$
that is closed under composition. Similarly, the input maps of network ${\rm\bf C}$ require an extension (in fact by two elements)
 to  
$$\Sigma_{\rm \bf C}=\{\sigma_1, \sigma_2, \sigma_3, \sigma_4, \sigma_5\}\ \mbox{with} \ \sigma_4=\sigma_2^2 \ \mbox{and}\ \sigma_5=\sigma_2\circ\sigma_3\, .$$ 
The resulting input maps are the following.
 \begin{equation}
 \nonumber
 \begin{array}{c|ccc} {\rm \bf A} & v_1 & v_2 &v_3 \\ \hline 
\sigma_1 & v_1 & v_2  & v_3   \\
\bl{\sigma_2} &  \bl{v_1} & \bl{v_1} & \bl{v_2}  \\
\ro{\sigma_3} & \ro{v_1} & \ro{v_1} & \ro{v_1}  \\
\multicolumn{4}{c}{ }\\
\multicolumn{4}{c}{ }
\end{array}
 \hspace{.5cm}
   \begin{array}{c|ccc} {\rm \bf B} & v_1 & v_2 & v_3 \\ \hline 
\sigma_1 & v_1 & v_2  & v_3   \\
\bl{\sigma_2} &  \bl{v_1} & \bl{v_1} & \bl{v_2}  \\
\ro{\sigma_3} & \ro{v_2} & \ro{v_2} & \ro{v_2}  \\
\sigma_4 & v_1 & v_1 & v_1\\
\multicolumn{4}{c}{ }
\end{array}\hspace{.5cm}
\begin{array}{c|ccc} {\rm \bf C} & v_1 & v_2 & v_3 \\ \hline 
\sigma_1  & v_1 & v_2 & v_3   \\
\bl{\sigma_2} &  \bl{v_2} & \bl{v_2} & \bl{v_1}  \\
\ro{\sigma_3}  & \ro{v_3} & \ro{v_3} & \ro{v_3} \\
\sigma_4 & v_2&v_2&v_2\\
\sigma_5 & v_1 & v_1 & v_1 
\end{array}  \ .
\end{equation}

\noindent One checks that the composition/product tables of $\Sigma_{\rm \bf A}$, $\Sigma_{\rm \bf B}$ and $\Sigma_{\rm \bf C}$ read
 \begin{align}\nonumber 
\begin{array}{c|ccc} \Sigma_{\rm \bf A} & \sigma_1 & \sigma_2 & \sigma_3   \\ \hline 
\sigma_1 & \sigma_1 & \sigma_2  & \sigma_3  \\
\sigma_2 & \sigma_2 & \sigma_3 & \sigma_3  \\
\sigma_3 & \sigma_3 & \sigma_3 & \sigma_3  \\
\multicolumn{4}{c}{ }\\
\multicolumn{4}{c}{ }
\end{array}  \hspace{.5cm}
\begin{array}{c|cccc} \Sigma_{\rm \bf B} & \sigma_1 & \sigma_2 & \sigma_3 &\sigma_4  \\ \hline 
\sigma_1 & \sigma_1 & \sigma_2  & \sigma_3  & \sigma_4 \\
\sigma_2 & \sigma_2 & \sigma_4 & \sigma_4 & \sigma_4  \\
\sigma_3 & \sigma_3 & \sigma_3 & \sigma_3 & \sigma_3\\
\sigma_4 & \sigma_4 & \sigma_4 & \sigma_4 & \sigma_4 \\
\multicolumn{5}{c}{ }
\end{array}   \hspace{.5cm}
\begin{array}{c|ccccc} \Sigma_{\rm \bf C} & \sigma_1 & \sigma_2 & \sigma_3 & \sigma_4 & \sigma_5 \\ \hline 
\sigma_1 & \sigma_1 & \sigma_2  & \sigma_3 & \sigma_4 & \sigma_5 \\
\sigma_2 & \sigma_2 & \sigma_4 & \sigma_5 & \sigma_4& \sigma_4 \\
\sigma_3 & \sigma_3 & \sigma_3 & \sigma_3 & \sigma_3  & \sigma_3 \\
\sigma_4 & \sigma_4 & \sigma_4 & \sigma_4 & \sigma_4  & \sigma_4 \\
\sigma_5 & \sigma_5 & \sigma_5 & \sigma_5 & \sigma_5 &  \sigma_5
\end{array} 
\hspace{.1cm} .
\end{align}
One reads off the input maps $\widetilde \sigma_1, \widetilde \sigma_2$ and $\widetilde \sigma_3$ of the lifts $\widetilde {\rm \bf A}, \widetilde {\rm \bf B}$ and $\widetilde{\rm \bf C}$. They are given by
\begin{align}\nonumber
 \begin{array}{c|ccc} \widetilde{\rm \bf A} & \sigma_1 & \sigma_2& \sigma_3 \\ \hline 
\widetilde \sigma_1  & \sigma_1 & \sigma_2 & \sigma_3     \\
\bl{\widetilde \sigma_2} & \bl{\sigma_2}  & \bl{\sigma_3} & \bl{\sigma_3}  \\
 \ro{\widetilde \sigma_3}   & \ro{\sigma_3}   & \ro{\sigma_3} & \ro{\sigma_3}
\end{array}
\hspace{.5cm}
\begin{array}{c|cccc} \widetilde{\rm \bf B} & \sigma_1 & \sigma_2& \sigma_3& \sigma_4\\ \hline 
\widetilde \sigma_1  & \sigma_1 & \sigma_2 & \sigma_3  & \sigma_4     \\
\bl{\widetilde \sigma_2} & \bl{\sigma_2} &  \bl{\sigma_4} & \bl{\sigma_4} & \bl{\sigma_4}   \\
 \ro{\widetilde \sigma_3}   & \ro{\sigma_3} & \ro{\sigma_3} & \ro{\sigma_3}& \ro{\sigma_3}   
\end{array}
\hspace{.5cm}
\begin{array}{c|ccccc} \widetilde{\rm \bf C} & \sigma_1 & \sigma_2& \sigma_3& \sigma_4& \sigma_5\\ \hline 
\widetilde \sigma_1  & \sigma_1 & \sigma_2 & \sigma_3  & \sigma_4 & \sigma_5    \\
\bl{\widetilde \sigma_2} & \bl{\sigma_2} &  \bl{\sigma_4} & \bl{\sigma_5} & \bl{\sigma_4} & \bl{\sigma_4}  \\
 \ro{\widetilde \sigma_3}   & \ro{\sigma_3} & \ro{\sigma_3} & \ro{\sigma_3}& \ro{\sigma_3} & \ro{\sigma_3} 
\end{array}
\hspace{.1cm} .
\end{align}
The graphs of the fundamental networks $\widetilde {\rm \bf A}, \widetilde {\rm \bf B}$ and $\widetilde {\rm \bf C}$ are depicted in Figure \ref{pictfundamental}. The figure also displays the differential equations $\dot X = \gamma_f^{\widetilde {\rm\bf A}}(X)$,  $\dot X = \gamma_f^{\widetilde {\rm\bf B}}(X)$ and  $\dot X = \gamma_f^{\widetilde {\rm\bf C}}(X)$.
\vspace{-.4cm}
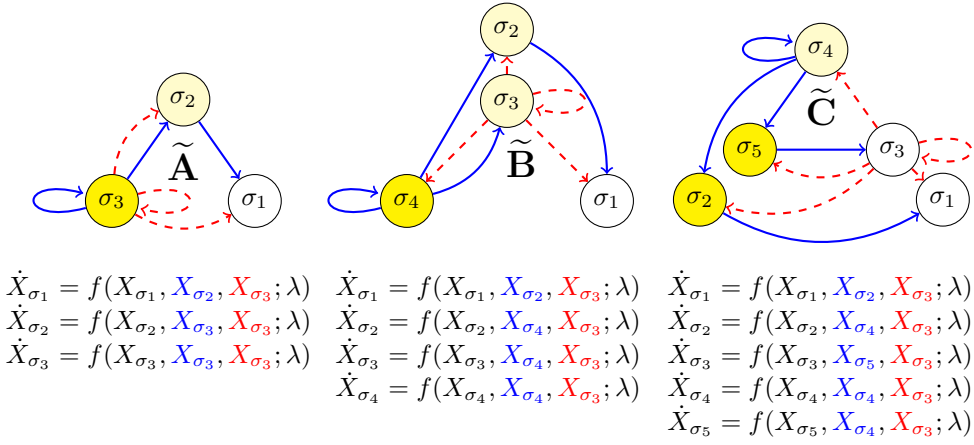
\begin{figure}[h]\renewcommand{\figurename}{\rm \bf \footnotesize Figure} 
\begin{center}
\hspace{.4cm}
\begin{tabular}{p{4cm}p{4.5cm}p{4.4cm}} \hspace{.3cm}
 \begin{tikzpicture}[->, scale=1.9]
	  \tikzstyle{vertextype1}=[circle, draw, minimum size=20pt,inner sep=1pt]
	 \tikzstyle{vertextype2} = [circle, draw, fill=yellow, minimum size=20pt,inner sep=1pt]
	 \tikzstyle{vertextype3} = [circle, draw, fill=yellow!25, minimum size=20pt,inner sep=1pt]
	 \tikzstyle{edgetype1} = [->, draw,line width=3pt,-,red!50]
	 \tikzstyle{edgetype2} = [draw,thick,-,black]
	  \node at (1,1.4) {};
	 \node at (1,-.3) {};
	  \node at (1.5,.28) {\Large {$\widetilde {\rm \bf A}$}};
	 \node[vertextype2] (v3) at (1,0) {$\sigma_3$};
	 \node[vertextype3] (v2) at (1.5, .71) {$\sigma_2$};
	 \node[vertextype1] (v1) at (2,0) {$\sigma_1$};
	\path[]
	(v2) edge [thick, blue] node {} (v1)
	(v3) edge [loop left, blue, thick] node {} (v3)
	(v3) edge [thick, blue] node {} (v2)
	(v3) edge [loop right, red, dashed, thick] node {} (v3)
	(v3) edge [thick, bend left, red, dashed] node {} (v2)
	(v3) edge [thick, dashed, bend right, red] node {} (v1);
 \end{tikzpicture}
& \hspace{-.2cm}
\begin{tikzpicture}[->, scale=1.9]
	 \tikzstyle{vertextype1}=[circle, draw, minimum size=20pt,inner sep=1pt]
	 \tikzstyle{vertextype2} = [circle, draw, fill=yellow, minimum size=20pt,inner sep=1pt]
	 \tikzstyle{vertextype3} = [circle, draw, fill=yellow!25, minimum size=20pt,inner sep=1pt]
	 \tikzstyle{edgetype1} = [->, draw,line width=3pt,-,red!50]
	 \tikzstyle{edgetype2} = [draw,thick,-,black]
	  \node at (.6,.3) {\Large {$\widetilde {\rm \bf B}$}};
	 \node at (1,1.4) {};
	 \node at (1,-.3) {};
	 \node[vertextype3] (v2) at (.5,1.2) {$\sigma_2$};
	 \node[vertextype1] (v1) at (1.2,0) {$\sigma_1$};
	  \node[vertextype3] (v3) at (.5,.7) {$\sigma_3$};
	 \node[vertextype2] (v4) at (-.2,0) {$\sigma_4$};
	\path[]
	(v2) edge [bend left, blue, thick] node {} (v1)
	(v4) edge [thick, blue] node {} (v2)
	(v4) edge [bend right, thick, blue] node {} (v3)
	(v4) edge [loop left, thick, blue] node {} (v4)
	(v3) edge [ red, thick, dashed] node {} (v1)
	(v3) edge [red, thick, dashed] node {} (v2)
	(v3) edge [loop right, red, thick, dashed] node {} (v3)
	(v3) edge [red, thick, dashed] node {} (v4);
 \end{tikzpicture}%
  & \hspace{-.3cm}
 \begin{tikzpicture}[->, scale=1.9]
	 \tikzstyle{vertextype1}=[circle, draw, minimum size=20pt,inner sep=1pt]
	 \tikzstyle{vertextype2} = [circle, draw, fill=yellow, minimum size=20pt,inner sep=1pt]
	 \tikzstyle{vertextype3} = [circle, draw, fill=yellow!25, minimum size=20pt,inner sep=1pt]
	 \tikzstyle{edgetype1} = [->, draw,line width=3pt,-,red!50]
	 \tikzstyle{edgetype2} = [draw,thick,-,black]
	 \node at (1,1.4) {};
	 \node at (1,-.3) {};
	  \node at (1,.65) {\Large {$\widetilde {\rm \bf C}$}};
	 \node[vertextype1] (v1) at (1.85,0) {$\sigma_1$};
	 \node[vertextype2] (v2) at (.15,0) {$\sigma_2$};
	 \node[vertextype1] (v3) at (1.5,.35) {$\sigma_3$};
	  \node[vertextype3] (v4) at (1,1.05) {$\sigma_4$};
	 \node[vertextype2] (v5) at (.5,.35) {$\sigma_5$};
	\path[]
	(v2) edge [bend right, blue, thick] node {} (v1)
	(v4) edge [thick, bend right, blue] node {} (v2)
	(v5) edge [thick, blue] node {} (v3)
	(v4) edge [loop left, thick, blue] node {} (v4)
	(v4) edge [thick, blue] node {} (v5)
	(v3) edge [red, thick, dashed] node {} (v1)
	(v3) edge [red, thick, bend left, dashed] node {} (v2)
	(v3) edge [loop right, red, thick, dashed] node {} (v3)
	(v3) edge [red, thick, dashed] node {} (v4)
	(v3) edge [bend left, red, thick, dashed] node {} (v5);
 \end{tikzpicture} 
 \end{tabular}\\
 \hspace{-.75cm}
 \begin{tabular}{p{4.3cm}p{3.7cm}p{4.2cm}}
 \vspace{-9mm} \hspace{0mm}
 \begin{equation}\hspace{0mm}  \nonumber  
 \begin{array}{l}
 \dot X_{\sigma_1} = f(X_{\sigma_1}, \bl{X_{\sigma_2}}, \ro{X_{\sigma_3}}; \lambda) \\
 \dot X_{\sigma_2} = f(X_{\sigma_2}, \bl{X_{\sigma_3}}, \ro{X_{\sigma_3}}; \lambda)\\
\dot X_{\sigma_3} = f(X_{\sigma_3}, \bl{X_{\sigma_3}}, \ro{X_{\sigma_3}}; \lambda) 
 \end{array}
 \hspace{-.8cm}
 \end{equation} & \vspace{-9mm}
 \hspace{-2mm}
  \begin{equation}\hspace{0mm}  \nonumber 
 \begin{array}{l}
 \dot X_{\sigma_1} = f(X_{\sigma_1}, \bl{X_{\sigma_2}}, \ro{X_{\sigma_3}}; \lambda)\\
 \dot X_{\sigma_2} = f(X_{\sigma_2}, \bl{X_{\sigma_4}}, \ro{X_{\sigma_3}}; \lambda) \\
  \dot X_{\sigma_3} = f(X_{\sigma_3}, \bl{X_{\sigma_4}}, \ro{X_{\sigma_3}}; \lambda) \\
   \dot X_{\sigma_4} = f(X_{\sigma_4}, \bl{X_{\sigma_4}}, \ro{X_{\sigma_3}}; \lambda) 
 \end{array}
 \hspace{-.5cm}
 \end{equation} 
 & \vspace{-.9cm}
 \hspace{-2mm}
   \begin{equation}\hspace{0mm}    \nonumber 
 \begin{array}{l}   
  \dot X_{\sigma_1} = f(X_{\sigma_1}, \bl{ X_{\sigma_2}}, \ro{X_{\sigma_3}}; \lambda) \\
 \dot X_{\sigma_2} = f(X_{\sigma_2}, \bl{X_{\sigma_4}}, \ro{X_{\sigma_3}}; \lambda) \\
\dot X_{\sigma_3} = f(X_{\sigma_3}, \bl{X_{\sigma_5}}, \ro{X_{\sigma_3}}; \lambda)\\ 
\dot X_{\sigma_4} = f(X_{\sigma_4}, \bl{X_{\sigma_4}}, \ro{X_{\sigma_3}}; \lambda)\\
\dot X_{\sigma_5} = f(X_{\sigma_5}, \bl{X_{\sigma_4}}, \ro{X_{\sigma_3}}; \lambda)
 \end{array}
 \hspace{-.8cm}
 \end{equation} 
 \end{tabular}\vspace{-.6cm}
 \caption{\footnotesize {\rm The fundamental networks of {\bf A}, {\bf B} and {\bf C} and their equations of motion.}}
 \vspace{-.6cm}
\label{pictfundamental}
\end{center}
 \end{figure}\\
\mbox{}
 \\ \indent
We note that network $\widetilde {\rm \bf A}$ is isomorphic to network {\bf A}. One may also observe that network {\bf B} is isomorphic to a quotient of network $\widetilde {\rm\bf B}$ and that network {\bf C} is isomorphic to a quotient of network $\widetilde {\rm\bf C}$. We show below that this is not a coincidence.
\end{example}

\noindent The following result clarifies the relation between a homogeneous network and its fundamental network.
 
 \begin{theorem}\label{fundamentaltheorem}
 Every homogeneous network ${\rm \bf N}=\{A\rightrightarrows_t^s V\}$ is a quotient of its fundamental network $\widetilde {\rm \bf N}$. 
 More precisely, for every vertex $v\in V$ of ${\rm \bf N}$, the map of vertices
$$\phi_v: \Sigma_{\rm\bf N} \to V \ \mbox{defined by}\ \phi_v(\sigma_j) := \sigma_j(v)$$
extends to a graph fibration from $\widetilde {\rm \bf N}$ to ${\rm \bf N}$. 
In particular, the map $\phi_v^*: E_{\rm\bf N} \to  E_{\widetilde {\rm\bf N}}$ defined by $(\phi_v^*x)_{\sigma_j}(t):= x_{\phi_v(\sigma_j)}=x_{\sigma_j(v)}$ conjugates the network maps $\gamma_f^{\rm\bf N}$ and $\gamma_f^{\widetilde {\rm\bf N}}$, that is
$$\phi_v^* \circ \gamma_f^{\rm\bf N}=\gamma_f^{\widetilde {\rm\bf N}}\circ \phi_v^*\, .$$
 \end{theorem}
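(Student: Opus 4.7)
The plan is to apply Proposition \ref{obviousprop}, which reduces verifying that $\phi_v$ extends to a graph fibration to checking the single intertwining identity $\phi_v \circ \widetilde\sigma_k = \sigma_k \circ \phi_v$ on vertices for every input color $k\in\{1,\ldots,m\}$. Once this is established, the conjugacy statement $\phi_v^* \circ \gamma_f^{\rm\bf N} = \gamma_f^{\widetilde{\rm\bf N}}\circ \phi_v^*$ follows immediately from Theorem \ref{devilletheorem1}, so no separate argument is needed for the dynamical conclusion.

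For the intertwining identity, I would simply evaluate both sides on an arbitrary vertex $\sigma_j \in \Sigma_{\rm\bf N}$ of the fundamental network. By definition of the input maps of $\widetilde{\rm\bf N}$, we have $\widetilde\sigma_k(\sigma_j) = \sigma_k\circ\sigma_j$, which lies in $\Sigma_{\rm\bf N}$ because $\Sigma_{\rm\bf N}$ was defined to be closed under composition. Hence
\[
(\phi_v\circ \widetilde\sigma_k)(\sigma_j) = \phi_v(\sigma_k\circ\sigma_j) = (\sigma_k\circ\sigma_j)(v) = \sigma_k(\sigma_j(v)) = \sigma_k(\phi_v(\sigma_j)) = (\sigma_k\circ\phi_v)(\sigma_j),
\]
where the middle equality is just the associativity of composition of maps $V\to V$. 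This is really the whole content: the fundamental network is engineered exactly so that its input map $\widetilde\sigma_k$ implements ``left multiplication by $\sigma_k$'' in $\Sigma_{\rm\bf N}$, and evaluation at $v$ turns left multiplication into the original input map $\sigma_k$ on $V$.

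Having verified the identity on vertices, Proposition \ref{obviousprop} guarantees that $\phi_v$ admits a unique extension to a morphism of coloured directed graphs that is in fact a graph fibration (the arrow of color $k$ targeting $\sigma_j$ in $\widetilde{\rm\bf N}$ is sent to the arrow of color $k$ targeting $\sigma_j(v)$ in ${\rm\bf N}$). Theorem \ref{devilletheorem1} then applies to $\phi_v$, giving the required conjugacy $\phi_v^*\circ \gamma_f^{\rm\bf N} = \gamma_f^{\widetilde{\rm\bf N}}\circ \phi_v^*$ and showing that ${\rm\bf N}$ is a quotient of $\widetilde{\rm\bf N}$.

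Honestly, there is no serious obstacle: the proof is essentially bookkeeping, and its real content is conceptual rather than technical — it is the recognition that the Cayley-type construction of $\widetilde{\rm\bf N}$ is set up precisely to make the evaluation maps $\sigma_j\mapsto \sigma_j(v)$ into graph fibrations. The only point worth being careful about is to check that one is allowed to form $\sigma_k\circ\sigma_j$ inside $\Sigma_{\rm\bf N}$, which uses closure under composition; and to remember that $\sigma_1=\mathrm{Id}_V$ so that the image of $\phi_v$ actually contains $v$, which is relevant for the informal ``quotient'' language even though surjectivity of $\phi_v$ itself is not part of the formal statement.
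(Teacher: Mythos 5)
Your proposal is correct and follows exactly the paper's own argument: verify the intertwining identity $\phi_v\circ\widetilde\sigma_k=\sigma_k\circ\phi_v$ by the same one-line computation, invoke Proposition \ref{obviousprop} to get the graph fibration, and appeal to Theorem \ref{devilletheorem1} for the conjugacy. Nothing is missing.
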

 \begin{proof}
 It follows from the definition of $\widetilde \sigma_k$ that
 $$\sigma_k(\phi_v(\sigma_j)) = \sigma_k(\sigma_j(v)) = (\sigma_k \circ \sigma_j)(v) = (\widetilde \sigma_k(\sigma_j))(v) = \phi_v(\widetilde \sigma_k(\sigma_j))\, .$$
 This shows that $\sigma_k\circ \phi_v=\phi_v\circ \widetilde \sigma_k$ and thus by Proposition \ref{obviousprop} that $\phi_v$ extends to a graph fibration. The remaining statements  follow from Theorem \ref{devilletheorem1}.
 \end{proof} 
The image of the map $\phi_v$ of Theorem \ref{fundamentaltheorem} is equal to the subset $\{\sigma_j(v)\, |\, \sigma_j\in \Sigma_{\rm \bf N}\}$ of the vertices of ${\rm \bf N}$. Recall from Remark \ref{semigroupremark} that this set consists of all the direct and indirect inputs of cell $v$. In general, we   define the {\it input network} ${\rm \bf N}_{(v)}=\{A_{(v)}\rightrightarrows_t^s V_{(v)}\}$ of a cell $v$ in an arbitrary (i.e. not necessarily homogeneous) network ${\rm \bf N}=\{A\rightrightarrows_t^s V\}$ by
 $$V_{(v)} := \{w\in  V\, |\, \exists \ \mbox{path}\ \mbox{in} \ {\rm \bf N}\ \mbox{from}\ w \ \mbox{to}\ v\, \}\ \mbox{and}\ A_{(v)}:= \{a\in A\,|\, t(a)\in V_{(v)}\, \}\, .$$
This input network consists of those cells that can be ``felt'' by cell $v$, either directly or indirectly. In fact, it automatically holds that $s(a)\in V_{(v)}$ for all arrows $a\in A_{(v)}$. Hence, ${\rm \bf N}_{(v)}$ is a subnetwork of ${\rm \bf N}$ and the embedding 
$$e_{{\rm \bf N}_{(v)}}: {\rm \bf N}_{(v)} \to {\rm \bf N}$$  
is an injective graph fibration. Theorem \ref{fundamentaltheorem} can now be rephrased as follows:
\begin{corollary}\label{embeddedcor}
The dynamics of the input network ${\rm\bf N}_{(v)}$ of every cell $v$ of ${\rm\bf N}$ is embedded as the robust synchrony space 
$${\rm Syn}_{P_{(v)}}:=\{X\in E_{\widetilde {\rm\bf N}}\, |\, X_{\sigma_j}=X_{\sigma_k}\, \mbox{when}\ \sigma_j(v)=\sigma_k(v)\, \}$$
inside the dynamics of the fundamental network $\widetilde {\rm\bf N}$. \end{corollary}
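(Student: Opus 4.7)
The plan is to factor the graph fibration $\phi_v:\widetilde{\rm\bf N}\to{\rm\bf N}$ of Theorem \ref{fundamentaltheorem} through the input network ${\rm\bf N}_{(v)}$, and then to use contravariance together with Proposition \ref{injsurjremark} to exhibit ${\rm Syn}_{P_{(v)}}$ as the image of the pullback of the surjective factor.

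The first step is to observe, via Remark \ref{semigroupremark}, that the image of $\phi_v$ on vertices is exactly $V_{(v)}$. I then define $\psi:\widetilde{\rm\bf N}\to{\rm\bf N}_{(v)}$ on vertices by $\psi(\sigma_j):=\sigma_j(v)$, with the action on arrows inherited from $\phi_v$. One needs to verify that $\psi$ is itself a graph fibration, so that $\phi_v$ factors as $\widetilde{\rm\bf N}\xrightarrow{\psi}{\rm\bf N}_{(v)}\xrightarrow{e_{{\rm\bf N}_{(v)}}}{\rm\bf N}$. Surjectivity of $\psi$ is clear, and the fibration property carries over from $\phi_v$ because ${\rm\bf N}_{(v)}$ is by construction closed under taking predecessors: the arrows in ${\rm\bf N}$ ending at a vertex $\sigma_j(v)\in V_{(v)}$ are precisely the arrows in ${\rm\bf N}_{(v)}$ ending there, so the colour-preserving bijection supplied by $\phi_v$ is already the one required for $\psi$.

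Next, Remark \ref{contravariant} gives $\phi_v^*=\psi^*\circ e_{{\rm\bf N}_{(v)}}^*$, and Proposition \ref{injsurjremark} tells us that $\psi^*:E_{{\rm\bf N}_{(v)}}\to E_{\widetilde{\rm\bf N}}$ is injective (since $\psi$ is surjective) while $e_{{\rm\bf N}_{(v)}}^*$ is surjective (since $e_{{\rm\bf N}_{(v)}}$ is injective). Applying Theorem \ref{devilletheorem1} to $\psi$ yields $\psi^*\circ\gamma_f^{{\rm\bf N}_{(v)}}=\gamma_f^{\widetilde{\rm\bf N}}\circ\psi^*$, so $\psi^*$ embeds the dynamics of ${\rm\bf N}_{(v)}$ into that of $\widetilde{\rm\bf N}$ as a flow-invariant subspace for every choice of $f$, i.e.\ as a robust synchrony space.

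The final step is to identify ${\rm im}\,\psi^*$ with ${\rm Syn}_{P_{(v)}}$. Since $(\psi^*y)_{\sigma_j}=y_{\sigma_j(v)}$, any element of ${\rm im}\,\psi^*$ automatically satisfies $X_{\sigma_j}=X_{\sigma_k}$ whenever $\sigma_j(v)=\sigma_k(v)$, giving the inclusion ${\rm im}\,\psi^*\subseteq{\rm Syn}_{P_{(v)}}$. Conversely, for any $X\in{\rm Syn}_{P_{(v)}}$ the prescription $y_w:=X_{\sigma_j}$ for some (hence any) $\sigma_j$ with $\sigma_j(v)=w$ is well defined on $V_{(v)}$ precisely by the synchrony condition, and by construction satisfies $\psi^*y=X$. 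I do not anticipate a serious obstacle; the only point requiring honest care is checking that restricting the target of $\phi_v$ from ${\rm\bf N}$ to ${\rm\bf N}_{(v)}$ preserves the fibration property, which is where the closure of $V_{(v)}$ under predecessors is essential.
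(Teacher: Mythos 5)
Your proposal is correct and follows essentially the same route as the paper: both view $\phi_v$ of Theorem \ref{fundamentaltheorem} as a surjective graph fibration onto ${\rm\bf N}_{(v)}$, invoke Proposition \ref{injsurjremark} for injectivity of the pullback, use the conjugacy to embed the dynamics, and identify the image with ${\rm Syn}_{P_{(v)}}$ (the paper asserts the factorisation through ${\rm\bf N}_{(v)}$ without spelling out, as you do, that closure of $V_{(v)}$ under predecessors preserves the fibration property). The paper additionally records a direct check that $P_{(v)}$ is balanced, but its "alternative" argument for robustness — invariance of ${\rm im}\,\phi_v^*$ for every $f$ — is exactly the one you give.
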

\begin{proof}
Theorem \ref{fundamentaltheorem} implies that $\phi_{v}:\widetilde {\rm \bf N}\to{\rm\bf N}_{(v)}$ is a surjective graph fibration for every cell $v$ of ${\rm\bf N}$. By Proposition \ref{injsurjremark}, the linear map
$$\phi_{v}^*: E_{{\rm\bf N}_{(v)}} \to E_{\widetilde {\rm\bf N}}\ \mbox{defined by}\ (\phi_v^*x)_{\sigma_j} = x_{\sigma_j(v)}$$
is therefore injective. By Theorem \ref{fundamentaltheorem}, it thus embeds the dynamics of $\gamma_f^{{\rm \bf N}_{(v)}}$ inside the dynamics of $\gamma_f^{\widetilde {\rm\bf N}}$. It is clear that ${\rm im}\, \phi_v^*={\rm Syn}_{P_{(v)}}$.

One readily checks that the partition $P_{(v)}$ of $\Sigma_{\rm\bf N}$ for which 
$$\sigma_j \sim_{P_{(v)}} \sigma_k\ \mbox{if and only if}\ \sigma_j(v)=\sigma_k(v)$$ 
is balanced. Indeed, if $\sigma_j \sim_{P_{(v)}} \sigma_k$, then we have for every input map $\widetilde \sigma_l$ of $\widetilde {\rm\bf N}$ that
$(\widetilde \sigma_l(\sigma_j))(v) = (\sigma_l\circ\sigma_j)(v)=\sigma_l(\sigma_j(v))=\sigma_l(\sigma_k(v)) = (\sigma_l\circ\sigma_k)(v)=(\widetilde \sigma_l(\sigma_k))(v)$, and hence that $\widetilde \sigma_l(\sigma_j)\sim_{P^{(v)}} \widetilde \sigma_l(\sigma_k)$. Thus $P_{(v)}$ is balanced.

Alternatively, one may recall from Theorem \ref{fundamentaltheorem} that $\gamma_f^{\widetilde{\rm\bf N}}\circ\phi_v^*=\phi_v^*\circ\gamma_f^{\rm\bf N}$ for any response function $f$. This implies in particular that
$$\gamma_f^{\widetilde{\rm\bf N}}({\rm im}\, \phi_v^*) \subset {\rm im}\, \phi_v^*\, $$
and hence that ${\rm im}\, \phi_v^*$ is invariant under the dynamics of $\gamma_f^{\widetilde {\rm\bf N}}$. 
\end{proof}
\begin{remark}\label{restrictionremark}
Identifying $E_{{\rm \bf N}_{(v)}}$ with the synchrony space ${\rm Syn}_{P_{(v)}} \subset E_{\widetilde {\rm\bf N}}$ by means of the embedding $\phi_v^*$, we can also write the identity 
$\phi_v^* \circ \gamma_f^{\rm\bf N}=\gamma_f^{\widetilde {\rm\bf N}}\circ \phi_v^*$ as 
$$\gamma_f^{{\rm \bf N}_{(v)}} =  \gamma_f^{\widetilde {\rm\bf N}}|_{E_{{\rm\bf N}_{(v)}}}\, .$$
In other words, we may think of the dynamics of ${\rm \bf N}_{(v)}$ as the restriction to a synchrony subspace of the dynamics of $\widetilde {\rm \bf N}$.
\end{remark}
\begin{remark}
The dynamics of ${\rm \bf N}$ is itself embedded in the dynamics of $\widetilde {\rm \bf N}$ if there is a cell $v$ in {\bf N} so that ${\rm \bf N}_{(v)} = {\rm \bf N}$. It is natural to assume that such a cell exists: otherwise, the network may be considered pathological, or at least quite  irrelevant for our understanding of network dynamics.
\end{remark}
 \begin{remark}\label{allfibrations}
 Theorem \ref{fundamentaltheorem} shows that for every cell $v$ in the homogeneous network ${\rm \bf N}$, there is a graph fibration $\phi_v:\widetilde {\rm \bf N} \to {\rm \bf N}$ that sends cell $\sigma_1$ of $\widetilde {\rm \bf N}$ (representing the unit of $\Sigma_{\rm \bf N}$) to cell $v$ of network {\bf N}. On the other hand, there is only one graph fibration $\phi:\widetilde {\rm \bf N}\to {\rm \bf N}$ that maps cell $\sigma_1$ of $\widetilde {\rm \bf N}$ to cell $v$ of ${\rm \bf N}$, because if $\phi(\sigma_1)=v$, then $\phi(\sigma_k) = \phi(\widetilde \sigma_k(\sigma_1)) = \sigma_k(\phi(\sigma_1)) = \sigma_k(v)$. So Theorem \ref{fundamentaltheorem} in fact describes all possible graph fibrations from $\widetilde {\rm \bf N}$ to ${\rm \bf N}$.
 \end{remark}
\begin{example}\label{realisation}
Our networks ${\rm \bf A}, {\rm \bf B}$ and ${\rm \bf C}$ are themselves input networks of one or more of their cells. For example, ${\rm\bf A}={\rm\bf A}_{(v_3)}$, ${\rm\bf B}={\rm\bf B}_{(v_3)}$ and ${\rm\bf C}={\rm\bf C}_{(v_3)}$, so the networks are quotients of their fundamental networks. The corresponding graph fibrations are shown in Figure \ref{pictfundamental1}. For example, the graph fibration $\phi_{v_3}: \widetilde {\rm\bf C}\to  {\rm\bf C}$ sends 
$$ (\sigma_1, \sigma_2, \sigma_3, \sigma_4, \sigma_5)\mapsto (v_3, v_1, v_3, v_2, v_1)\, .$$
This means that when $(x_{v_1}(t), x_{v_2}(t), x_{v_3}(t))$ solves the equations of network ${\rm\bf C}$, then 
$$(X_{\sigma_1}(t), X_{\sigma_2}(t), X_{\sigma_3}(t), X_{\sigma_4}(t), X_{\sigma_5}(t)) = (x_{v_3}(t), x_{v_1}(t), x_{v_3}(t), x_{v_2}(t), x_{v_1}(t))$$
solves those of network $\widetilde {\rm \bf C}$. Network ${\rm \bf C}$ is therefore embedded inside network $\widetilde {\rm \bf C}$ as the robust synchrony space $\{X_{\sigma_1}=X_{\sigma_3}, X_{\sigma_2}=X_{\sigma_5}\}$. Similarly, network ${\rm \bf B}$ is realised inside $\widetilde {\rm \bf B}$ as the robust synchrony space $\{X_{\sigma_2}=X_{\sigma_3}\}$. Finally, the dynamics of networks ${\rm \bf A}$ and $\widetilde {\rm\bf A}$ are bi-conjugate because $\phi_{v_3}: \widetilde{\rm \bf A} \to  {\rm\bf A}$ is an isomorphism. 
\end{example}
\vspace{-.4cm}
\begin{figure}[h]\renewcommand{\figurename}{\rm \bf \footnotesize Figure} 
\begin{center}
\hspace{0cm}
\begin{tabular}{p{4cm}p{4.2cm}p{4.4cm}}
 \begin{tikzpicture}[->, scale=1.9]
	  \tikzstyle{vertextype1}=[circle, draw, minimum size=20pt,inner sep=1pt]
	 \tikzstyle{vertextype2} = [circle, draw, fill=yellow, minimum size=20pt,inner sep=1pt]
	 \tikzstyle{vertextype3} = [circle, draw, fill=yellow!25, minimum size=20pt,inner sep=1pt]
	 \tikzstyle{edgetype1} = [->, draw,line width=3pt,-,red!50]
	 \tikzstyle{edgetype2} = [draw,thick,-,black]
	    \node at (1,1.3) {};
	 \node at (1,-2.8) {};
	  \node at (1.5,.28) {\Large {$\widetilde {\rm \bf A}$}};
	 \node[vertextype2] (v3) at (1,0) {$\sigma_3$};
	 \node[vertextype3] (v2) at (1.5, .71) {$\sigma_2$};
	 \node[vertextype1] (v1) at (2,0) {$\sigma_1$};
	    \node at (1.5,-2.35) {\Large {\bf A}};
	 \node[vertextype2] (v1a) at (1,-2.6) {$v_1$};
	 \node[vertextype3] (v2a) at (1.5, -1.89) {$v_2$};
	 \node[vertextype1] (v3a) at (2,-2.6) {$v_3$};
	  \draw [->,decorate,decoration=snake, thick] (1.5,-.4) -- (1.5,-1.6) node [above, midway, sloped]  {{\tiny $\sigma_1\mapsto v_3,\sigma_2\mapsto v_2 $}} node [below, midway, sloped] {{\tiny $\sigma_3\mapsto v_1$}};
	\path[]
	(v2) edge [thick, blue] node {} (v1)
	(v3) edge [loop left, blue, thick] node {} (v3)
	(v3) edge [thick, blue] node {} (v2)
	(v3) edge [loop right, red, dashed, thick] node {} (v3)
	(v3) edge [thick, bend left, red, dashed] node {} (v2)
	(v3) edge [thick, dashed, bend right, red] node {} (v1)
	(v2a) edge [thick, blue] node {} (v3a)
	(v1a) edge [loop left, blue, thick] node {} (v1a)
	(v1a) edge [thick, blue] node {} (v2a)
	(v1a) edge [loop right, red, dashed, thick] node {} (v1a)
	(v1a) edge [thick, bend left, red, dashed] node {} (v2a)
	(v1a) edge [thick, dashed, bend right, red] node {} (v3a);
 \end{tikzpicture} 
& \hspace{-.6cm}
\begin{tikzpicture}[->, scale=1.9]
	 \tikzstyle{vertextype1}=[circle, draw, minimum size=20pt,inner sep=1pt]
	 \tikzstyle{vertextype2} = [circle, draw, fill=yellow, minimum size=20pt,inner sep=1pt]
	 \tikzstyle{vertextype3} = [circle, draw, fill=yellow!25, minimum size=20pt,inner sep=1pt]
	 \tikzstyle{edgetype1} = [->, draw,line width=3pt,-,red!50]
	 \tikzstyle{edgetype2} = [draw,thick,-,black]
	    \node at (1,1.3) {};
	 \node at (1,-2.8) {};
	  \node at (.6,.3) {\Large {$\widetilde {\rm \bf B}$}};
	 \node[vertextype3] (v2) at (.5,1.2) {$\sigma_2$};
	 \node[vertextype1] (v1) at (1.2,0) {$\sigma_1$};
	  \node[vertextype3] (v3) at (.5,.7) {$\sigma_3$};
	 \node[vertextype2] (v4) at (-.2,0) {$\sigma_4$};
	   \node at (.4,-2.35) {\Large {\bf B}};
	 \node[vertextype1] (v3a) at (.9,-2.6) {$v_3$};
	 \node[vertextype3] (v2a) at (.4, -1.89) {$v_2$};
	 \node[vertextype2] (v1a) at (-.1,-2.6) {$v_1$};
	  \draw [->,decorate,decoration=snake, thick] (0.4,-.4) -- (0.4,-1.6) node [above, midway, sloped]  {{\tiny $\sigma_4\mapsto v_1,\sigma_1\mapsto v_3 $}} node [below, midway, sloped] {{\tiny $\sigma_2,\sigma_3\mapsto v_2$}};
	\path[]
	(v2) edge [bend left, blue, thick] node {} (v1)
	(v4) edge [thick, blue] node {} (v2)
	(v4) edge [bend right, thick, blue] node {} (v3)
	(v4) edge [loop left, thick, blue] node {} (v4)
	(v3) edge [ red, thick, dashed] node {} (v1)
	(v3) edge [red, thick, dashed] node {} (v2)
	(v3) edge [loop right, red, thick, dashed] node {} (v3)
	(v3) edge [red, thick, dashed] node {} (v4)
	(v1a) edge [loop left, thick, blue] node {} (v1a)
	(v1a) edge [blue, thick] node {} (v2a)
	(v2a) edge [thick, blue] node {} (v3a)
	(v2a) edge [bend left, red, dashed, thick] node {} (v3a)
	(v2a) edge [thick, red, loop right, dashed] node {} (v2a)
	(v2a) edge [bend right, red, thick, dashed] node {} (v1a);
 \end{tikzpicture}
 & 
 \hspace{-.4cm} 
 \begin{tikzpicture}[->, scale=1.9]
	 \tikzstyle{vertextype1}=[circle, draw, minimum size=20pt,inner sep=1pt]
	 \tikzstyle{vertextype2} = [circle, draw, fill=yellow, minimum size=20pt,inner sep=1pt]
	 \tikzstyle{vertextype3} = [circle, draw, fill=yellow!25, minimum size=20pt,inner sep=1pt]
	 \tikzstyle{edgetype1} = [->, draw,line width=3pt,-,red!50]
	 \tikzstyle{edgetype2} = [draw,thick,-,black]
	  \node at (1,1.3) {};
	 \node at (1,-2.8) {};
	  \node at (.5,.65) {\Large {$\widetilde {\rm \bf C}$}};
	 \node[vertextype1] (v1) at (1.35,0) {$\sigma_1$};
	 \node[vertextype2] (v2) at (-.35,0) {$\sigma_2$};
	 \node[vertextype1] (v3) at (1,.35) {$\sigma_3$};
	  \node[vertextype3] (v4) at (.5,1.05) {$\sigma_4$};
	 \node[vertextype2] (v5) at (0,.35) {$\sigma_5$};
	  \node at (.4,-2.35) {\Large {\bf C}};
	 \node[vertextype1] (v3a) at (.9,-2.6) {$v_3$};
	 \node[vertextype3] (v2a) at (.4, -1.89) {$v_2$};
	 \node[vertextype2] (v1a) at (-.1,-2.6) {$v_1$};
	  \draw [->,decorate,decoration=snake, thick] (0.4,-.4) -- (0.4,-1.6) node [above, midway, sloped] {{\tiny $\sigma_2,\sigma_5\mapsto v_1, \sigma_4\mapsto v_2$}} node [below, midway, sloped] {{\tiny $\sigma_1,\sigma_3\mapsto v_3$}};
	\path[]
	(v2) edge [bend right, blue, thick] node {} (v1)
	(v4) edge [thick, bend right, blue] node {} (v2)
	(v5) edge [thick, blue] node {} (v3)
	(v4) edge [loop left, thick, blue] node {} (v4)
	(v4) edge [thick, blue] node {} (v5)
	(v3) edge [red, thick, dashed] node {} (v1)
	(v3) edge [red, thick, bend left, dashed] node {} (v2)
	(v3) edge [loop right, red, thick, dashed] node {} (v3)
	(v3) edge [red, thick, dashed] node {} (v4)
	(v3) edge [bend left, red, thick, dashed] node {} (v5)
	(v1a) edge [thick, blue] node {} (v3a)
	(v2a) edge [loop left, blue, thick] node {} (v2a)
	(v2a) edge [thick, blue] node {} (v1a)
	(v3a) edge [loop right, red, dashed, thick] node {} (v3a)
	(v3a) edge [thick, red, dashed] node {} (v2a)
	(v3a) edge [bend left, red, thick, dashed] node {} (v1a);   
 \end{tikzpicture} 
 \end{tabular}
 \vspace{-.3cm}
 \caption{\footnotesize {\rm The graph fibrations $\phi_{v_3}: \widetilde {\rm \bf A} \to {\rm \bf A}$ and $\phi_{v_3}: \widetilde {\rm \bf B} \to {\rm \bf B}$ and $\phi_{v_3}: \widetilde {\rm \bf C} \to {\rm \bf C}$.}}
 \vspace{-.6cm}
\label{pictfundamental1}
\end{center}
 \end{figure}

\section{Hidden symmetry}\label{hiddensection}
In this section, we prove the main results of this paper. We start with an observation.
\begin{lemma}\label{homomorphismlemma}
 The fundamental network ${\widetilde {\widetilde {\rm\bf N}}}$ of a fundamental network $\widetilde {\rm \bf N}$ is isomorphic to $\widetilde {\rm \bf N}$. 
\end{lemma}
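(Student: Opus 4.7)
The plan is to exhibit an explicit isomorphism between $\widetilde{\rm \bf N}$ and $\widetilde{\widetilde{\rm \bf N}}$ by showing that the assignment $\sigma \mapsto \widetilde{\sigma}$ is a bijection of vertex sets that intertwines the input maps. Recall $\widetilde{\rm\bf N}$ has vertex set $\Sigma_{\rm\bf N}$ and input maps $\widetilde\sigma_1,\ldots,\widetilde\sigma_m$ given by $\widetilde\sigma_k(\sigma_j)=\sigma_k\circ\sigma_j$, so $\Sigma_{\widetilde{\rm\bf N}}$ is by definition the smallest collection of self-maps of $\Sigma_{\rm\bf N}$ containing these input maps and closed under composition.

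First I would verify the ``homomorphism property''
$$\widetilde{\sigma}_k\circ \widetilde{\sigma}_\ell = \widetilde{\sigma_k\circ \sigma_\ell},$$
which is a one-line computation: applied to $\sigma_j\in\Sigma_{\rm\bf N}$, both sides give $\sigma_k\circ\sigma_\ell\circ\sigma_j$. Since $\sigma_1,\ldots,\sigma_m$ generate $\Sigma_{\rm\bf N}$ under composition, iterating this identity shows that every composition of the generators $\widetilde\sigma_1,\ldots,\widetilde\sigma_m$ has the form $\widetilde\sigma$ for some $\sigma\in\Sigma_{\rm\bf N}$, and conversely every such $\widetilde\sigma$ is realised as such a composition. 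Hence
$$\Sigma_{\widetilde{\rm\bf N}}=\{\widetilde{\sigma}\mid \sigma\in \Sigma_{\rm\bf N}\}.$$

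Next I would show that the map $\Phi:\Sigma_{\rm\bf N}\to \Sigma_{\widetilde{\rm\bf N}}$, $\sigma\mapsto \widetilde\sigma$ is a bijection. Surjectivity is immediate from the previous step; for injectivity, if $\widetilde{\sigma}_j=\widetilde{\sigma}_k$ then evaluating both at the unit $\sigma_1={\rm Id}_V$ of $\Sigma_{\rm\bf N}$ yields $\sigma_j=\sigma_j\circ\sigma_1=\widetilde{\sigma}_j(\sigma_1)=\widetilde{\sigma}_k(\sigma_1)=\sigma_k$.

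Finally, I would check that $\Phi$ intertwines the input maps, so that by Proposition \ref{obviousprop} it extends to a graph isomorphism. The input maps of $\widetilde{\widetilde{\rm\bf N}}$ are $\widetilde{\widetilde{\sigma}}_k$ for $1\leq k\leq m$, and by the homomorphism property
$$\widetilde{\widetilde{\sigma}}_k(\widetilde{\sigma}_j)=\widetilde{\sigma}_k\circ\widetilde{\sigma}_j=\widetilde{\sigma_k\circ\sigma_j}=\Phi\bigl(\widetilde{\sigma}_k(\sigma_j)\bigr),$$
i.e.\ $\Phi\circ\widetilde{\sigma}_k=\widetilde{\widetilde{\sigma}}_k\circ\Phi$ for every $1\leq k\leq m$, as required. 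There is really no obstacle here: the only subtle point is keeping the two layers of tildes notationally straight and remembering that the valency (and hence the number of input maps) stays equal to $m$ when passing to a fundamental network, so that only the generators $\widetilde\sigma_1,\ldots,\widetilde\sigma_m$ (and not all of $\Sigma_{\widetilde{\rm\bf N}}$) index the arrows of $\widetilde{\widetilde{\rm\bf N}}$.
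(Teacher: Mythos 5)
Your proof is correct and follows essentially the same route as the paper's: the same homomorphism identity $\widetilde{\sigma}_k\circ\widetilde{\sigma}_\ell=\widetilde{\sigma_k\circ\sigma_\ell}$, the same injectivity argument via evaluation at the unit $\sigma_1$, and the same intertwining check for the input maps. No gaps.
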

 \begin{proof} 
Recall that the vertex set of $\widetilde {\rm\bf N}$ is the semigroup $\Sigma_{\rm\bf N}=\{\sigma_1, \ldots, \sigma_n\}$, and that $\widetilde {\rm\bf N}$ has input maps  $\widetilde \sigma_1, \ldots, \widetilde \sigma_m$ defined by $\widetilde \sigma_j(\sigma_k)=\sigma_j\circ\sigma_k$. Consequently, the vertex set of ${\widetilde {\widetilde {\rm\bf N}}}$ is the semigroup $\Sigma_{\widetilde {\rm \bf N}}$ generated by $\widetilde \sigma_1,\ldots, \widetilde \sigma_m$, while ${\widetilde {\widetilde {\rm\bf N}}}$ has input maps ${\widetilde {\widetilde \sigma}}_1, \ldots, {\widetilde {\widetilde \sigma}}_m$ defined by ${\widetilde {\widetilde \sigma}}_j(\widetilde \sigma_k) = \widetilde \sigma_j \circ \widetilde \sigma_k$. We claim that $\Sigma_ {\rm \bf N}$ and $\Sigma_{\widetilde {\rm \bf N}}$ are isomorphic semigroups,   which implies the lemma. To prove our claim, simply note that $(\widetilde \sigma_k\circ\widetilde \sigma_j)(\sigma_i) = \sigma_k\circ\sigma_j\circ\sigma_i = \widetilde {(\sigma_k\circ \sigma_j)}(\sigma_i)$, i.e. 
 $$\widetilde \sigma_k\circ\widetilde \sigma_j= \widetilde {\sigma_k\circ \sigma_j}\, .$$
 Because $\Sigma_{\rm\bf N}$ is the smallest semigroup containing $\sigma_1, \ldots, \sigma_m$, this observation implies that 
 $$\phi: \sigma_j\mapsto \widetilde \sigma_j$$ defines a surjective homomorphism from $\Sigma_{\rm \bf N}$ to $\Sigma_{\widetilde {\rm \bf N}}$. Moreover, $\phi$ is injective, because $\Sigma_{\rm\bf N}$ contains a unit $\sigma_1$, so that if $\widetilde \sigma_j = \widetilde \sigma_k$, then $\sigma_j= \sigma_j \circ \sigma_1 =\widetilde \sigma_j(\sigma_1)=\widetilde \sigma_k(\sigma_1)= \sigma_k\circ \sigma_1= \sigma_k$. We conclude that $\phi$ is an isomorphism and, in particular, a bijection between the vertices of $\widetilde {\rm\bf N}$ and those of $\widetilde {\widetilde {\rm \bf N}}$. 
It is also clear that $\phi$ intertwines the input maps of $\widetilde {\rm\bf N}$ and ${\widetilde {\widetilde {\rm\bf N}}}$, since
$${\widetilde {\widetilde \sigma}}_k(\phi( \sigma_j)) = {\widetilde {\widetilde \sigma}}_k(\widetilde \sigma_j) = \widetilde \sigma_k\circ \widetilde \sigma_j  = \widetilde {\sigma_k\circ \sigma_j}  = \phi(\widetilde \sigma_k(\sigma_j))\, .$$
This proves that $\phi$ extends to an isomorphism between $\widetilde {\rm\bf N}$ and ${\widetilde {\widetilde { \rm\bf N}}}$.
\end{proof}
Combining Theorem \ref{fundamentaltheorem} and Lemma \ref{homomorphismlemma}, we obtain:
\begin{theorem}\label{symmetrytheorem}
Let $\widetilde {\rm \bf N}$ be a homogeneous fundamental network. For all $1\leq i\leq n$, the map
$$\phi_{\sigma_i}: \Sigma_{\rm\bf N} \to \Sigma_{\rm\bf N}\ \mbox{defined by}\ \phi_{\sigma_i}(\sigma_j):=\sigma_j\circ \sigma_i$$
extends to a graph fibration from $\widetilde {\rm \bf N}$ to itself. Every network map $\gamma_f^{\widetilde {\rm\bf N}}$ is thus $\Sigma_{\rm\bf N}$-equivariant:
$$\phi_{\sigma_i}^* \circ  \gamma_f^{\widetilde {\rm\bf N}}= \gamma_f^{\widetilde {\rm\bf N}} \circ \phi_{\sigma_i}^*\ \mbox{for all}\ \sigma_i\in \Sigma_{\rm \bf N}\, ,$$
where we recall that $\phi_{\sigma_i}^*:E_{\widetilde {\rm \bf N}}\to E_{\widetilde {\rm \bf N}}$ is defined by $(\phi_{\sigma_i}^*X)_{\sigma_j}:=X_{\phi_{\sigma_i}(\sigma_j)}=X_{\sigma_j\circ\sigma_i}$. 
\end{theorem}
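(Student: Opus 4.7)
The plan is to invoke Proposition \ref{obviousprop} to verify that the vertex map $\phi_{\sigma_i}:\Sigma_{\rm\bf N}\to\Sigma_{\rm\bf N}$ extends to a graph fibration of $\widetilde{\rm\bf N}$ into itself, and then to read off the $\Sigma_{\rm\bf N}$-equivariance of every network map as a direct consequence of Theorem \ref{devilletheorem1}. The whole work is thus reduced to checking one intertwining identity between $\phi_{\sigma_i}$ and the input maps $\widetilde\sigma_k$ of the fundamental network.

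Concretely, I would compute, for each $1\le k\le m$ and each $\sigma_j\in\Sigma_{\rm\bf N}$,
\begin{equation}\nonumber
(\phi_{\sigma_i}\circ\widetilde\sigma_k)(\sigma_j)=\phi_{\sigma_i}(\sigma_k\circ\sigma_j)=(\sigma_k\circ\sigma_j)\circ\sigma_i=\sigma_k\circ(\sigma_j\circ\sigma_i)=\widetilde\sigma_k(\phi_{\sigma_i}(\sigma_j)),
\end{equation}
so that $\phi_{\sigma_i}\circ\widetilde\sigma_k=\widetilde\sigma_k\circ\phi_{\sigma_i}$ for every input colour $k$. The only ingredient being used here is associativity of composition inside the monoid $\Sigma_{\rm\bf N}$. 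By Proposition \ref{obviousprop}, this intertwining relation is exactly what is needed for $\phi_{\sigma_i}$ to define a graph fibration from $\widetilde{\rm\bf N}$ to itself, and then Theorem \ref{devilletheorem1} delivers the conjugacy $\phi_{\sigma_i}^*\circ\gamma_f^{\widetilde{\rm\bf N}}=\gamma_f^{\widetilde{\rm\bf N}}\circ\phi_{\sigma_i}^*$ for every admissible response function $f$, which is the $\Sigma_{\rm\bf N}$-equivariance claimed.

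An equivalent, slightly more structural route is to start from Theorem \ref{fundamentaltheorem} applied to the homogeneous network $\widetilde{\rm\bf N}$: for each vertex $\sigma_i$ of $\widetilde{\rm\bf N}$ one obtains a graph fibration $\widetilde{\widetilde{\rm\bf N}}\to\widetilde{\rm\bf N}$, and Lemma \ref{homomorphismlemma} identifies $\widetilde{\widetilde{\rm\bf N}}$ with $\widetilde{\rm\bf N}$ via $\sigma_j\mapsto\widetilde\sigma_j$. Tracing this composition on vertices recovers precisely $\sigma_j\mapsto\sigma_j\circ\sigma_i$, which is $\phi_{\sigma_i}$. I would mention this as a reassuring consistency check but carry out the one-line direct verification above as the actual proof.

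There is really no serious obstacle here: the statement is the ``correct'' manifestation of the fact that left multiplication and right multiplication commute in a semigroup, and the apparatus from the previous sections (Proposition \ref{obviousprop} and Theorem \ref{devilletheorem1}) converts this algebraic triviality into a dynamical symmetry statement for free. If anything deserves a line of comment, it is that $\phi_{\sigma_j}\circ\phi_{\sigma_i}=\phi_{\sigma_i\circ\sigma_j}$ (again by associativity), so that $\sigma_i\mapsto\phi_{\sigma_i}^*$ is an honest monoid action of $\Sigma_{\rm\bf N}$ on $E_{\widetilde{\rm\bf N}}$ by the contravariance recorded in Remark \ref{contravariant}; this is what justifies calling the conjugacy relation ``$\Sigma_{\rm\bf N}$-equivariance''.
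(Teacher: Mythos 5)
Your proposal is correct and matches the paper's own proof essentially verbatim: the paper also verifies the intertwining $\phi_{\sigma_i}\circ\widetilde\sigma_k=\widetilde\sigma_k\circ\phi_{\sigma_i}$ via associativity of composition in $\Sigma_{\rm\bf N}$, invokes Proposition \ref{obviousprop} to get the graph fibration, and derives the equivariance from Theorem \ref{fundamentaltheorem} (hence Theorem \ref{devilletheorem1}), while also noting the alternative derivation via $\widetilde{\widetilde{\rm\bf N}}\cong\widetilde{\rm\bf N}$ that you mention as a consistency check. The only cosmetic difference is that the paper presents the two routes in the opposite order of priority.
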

\begin{proof}
The statement of this theorem is a special case of the statement of Theorem \ref{fundamentaltheorem}, with ${\rm \bf N}$ replaced by $\widetilde {\rm \bf N}$ and  $\widetilde {\rm \bf N}$ replaced by $\widetilde {\widetilde {\rm \bf N}}$, noting that the latter is isomorphic to $ \widetilde {\rm \bf N}$. 
 
 Alternatively, from the fact that left-multiplication and right-multiplication in $\Sigma_{\rm \bf N}$  commute, it also follows directly that every $\phi_{\sigma_i}$ commutes with every input map $\widetilde\sigma_k$ of $\widetilde {\rm \bf N}$: 
$$\phi_{\sigma_i}(\widetilde\sigma_k(\sigma_j)) = \sigma_k\circ\sigma_j\circ\sigma_i =\widetilde \sigma_k (\phi_{\sigma_i}(\sigma_j))\, . $$
By Proposition \ref{obviousprop} it thus follows that $\phi_{\sigma_i}$ extends to a graph fibration. The remaining statements of the theorem now follow from Theorem \ref{fundamentaltheorem}.
\end{proof}
\begin{remark} The maps $\phi_{\sigma_i}: \widetilde {\rm \bf N} \to \widetilde {\rm \bf N}$ of Theorem \ref{symmetrytheorem} are examples of graph fibrations from a network graph to itself. We shall refer to such graph fibrations as {\it self-fibrations}. The self-fibrations of a network need not form a group. For example, because the right-multiplication by $\sigma_i$ in $\Sigma_{\rm\bf N}$ need not be an invertible operation, the self-fibrations $\phi_{\sigma_i}$ need not be invertible. Nevertheless, because the composition of two graph fibrations is obviously a graph fibration, the self-fibrations of a network form a semigroup with unit. 
\end{remark}
\begin{remark} Recall from Remark \ref{allfibrations} that Theorem \ref{symmetrytheorem} describes all the self-fibrations of a fundamental network. 
It clearly holds that $(\phi_{\sigma_k}\circ\phi_{\sigma_j})(\sigma_i) = \sigma_i\circ\sigma_j\circ\sigma_k = \phi_{\sigma_j\circ\sigma_k}(\sigma_i)$, i.e.
$$\phi_{\sigma_k}\circ\phi_{\sigma_j} =  \phi_{\sigma_j\circ\sigma_k}\, .$$
This contravariant transformation formula shows that the self-fibrations of a fundamental network $\widetilde {\rm \bf N}$ form a semigroup that is isomorphic to $\Sigma_{\rm \bf N}^*$, the so-called {\it opposite semigroup} of $\Sigma_{\rm \bf N}$, with product $\sigma_j*\sigma_k:=\sigma_k\circ\sigma_j$.
\end{remark}
\begin{remark}
On the other hand, Remark \ref{contravariant} implies the covariant transformation formula
$$\phi_{\sigma_j}^* \circ \phi_{\sigma_k}^* = (\phi_{\sigma_k}\circ \phi_{\sigma_j})^* = \phi_{\sigma_j\circ\sigma_k}^*$$
 In particular, the assignment
$$\sigma_j\mapsto \phi_{\sigma_j}^* \ \mbox{from}\ \Sigma_{\rm \bf N} \ \mbox{to}\ \mathfrak{gl}(E_{\widetilde {\rm \bf N}})$$   
defines a representation of the semigroup $\Sigma_{\rm \bf N}$ in the phase space of the fundamental network. This justifies that in Theorem \ref{symmetrytheorem}
the fundamental network maps $\gamma_f^{\widetilde{\rm\bf N}}$ are called ``$\Sigma_{\rm\bf N}$-equivariant''. One could also say that $\Sigma_{\rm \bf N}$ is a ``symmetry-semigroup'' of the fundamental network maps.
\end{remark}
\begin{remark}
By Corollary \ref{embeddedcor}, the dynamics of every input network ${\rm \bf N}_{(v)}$ is embedded as the robust synchrony space ${\rm Syn}_{P_{(v)}}$ inside the phase space of the fundamental network $\widetilde {\rm\bf N}$. Nevertheless, this synchrony space may not be invariant under the action of $\Sigma_{\rm \bf N}$ on the phase space of the fundamental network, i.e. it may not hold that $\phi_{\sigma_j}^*({\rm Syn}_{P_{(v)}} )\subset {\rm Syn}_{P_{(v)}}$ for all $\sigma_j\in \Sigma_{\rm \bf N}$.  Alternatively, if it so happens that  $\phi_{\sigma_j}^*({\rm Syn}_{P_{(v)}} )\subset {\rm Syn}_{P_{(v)}}$, then it is possible that $\phi_{\sigma_j}^*$ acts trivially on ${\rm Syn}_{P_{(v)}}$ (i.e. fixes it pointwise).

All this means that $\Sigma_{\rm\bf N}$ may not act (or not act faithfully) on the phase space of the   network ${\rm \bf N}$, but only on the extended phase space of its fundamental lift $\widetilde {\rm\bf N}$, in which that of ${\rm\bf N}$ is embedded. We think of the elements of $\Sigma_{\rm\bf N}$ as {\it hidden symmetries} of ${\rm \bf N}$. Perhaps counterintuitively, these hidden symmetries may have a major impact on the dynamics of ${\rm\bf N}$, see for example Remark \ref{symmetrysynchronyremark}.
\end{remark}
\begin{example}\label{symmetriesexample}
Recall the fundamental networks $\widetilde {\rm \bf A}$, $\widetilde {\rm \bf B}$ and $\widetilde {\rm \bf C}$ of Example \ref{fundamentalexample} and Figure \ref{pictfundamental}. Their self-fibrations can be read off from the product tables of $\Sigma_{\rm \bf A}$, $\Sigma_{\rm \bf B}$ and $\Sigma_{\rm \bf C}$ given in Example \ref{fundamentalexample}. The action of these self-fibrations on vertices is as follows:
\begin{align}\nonumber
 \begin{array}{c|ccc} \widetilde{\rm \bf A} & \sigma_1 & \sigma_2& \sigma_3 \\ \hline 
\phi_{\sigma_1}  & \sigma_1 & \sigma_2 & \sigma_3     \\
 \phi_{\sigma_2} &  {\sigma_2}  &  {\sigma_3} &  {\sigma_3}  \\
 \phi_{\sigma_3}   &  {\sigma_3}   &  {\sigma_3} &  {\sigma_3}\\
 \multicolumn{3}{c}{ }\\
 \multicolumn{3}{c}{ }
\end{array}
\hspace{.5cm}
\begin{array}{c|cccc} \widetilde{\rm \bf B} & \sigma_1 & \sigma_2& \sigma_3& \sigma_4\\ \hline 
\phi_{\sigma_1}  & \sigma_1 & \sigma_2 & \sigma_3  & \sigma_4     \\
 \phi_{\sigma_2} &  {\sigma_2} &   {\sigma_4} &  {\sigma_3} &  {\sigma_4}   \\
  \phi_{\sigma_3}   &  {\sigma_3} &  {\sigma_4} &  {\sigma_3}&  {\sigma_4}   \\
 \phi_{\sigma_4} & \sigma_4& \sigma_4&\sigma_3 &\sigma_4 \\
 \multicolumn{4}{c}{ }
\end{array}
\hspace{.5cm}
\begin{array}{c|ccccc} \widetilde{\rm \bf C} & \sigma_1 & \sigma_2& \sigma_3& \sigma_4& \sigma_5\\ \hline 
\phi_{\sigma_1}  & \sigma_1 & \sigma_2 & \sigma_3  & \sigma_4 & \sigma_5    \\
\phi_{\sigma_2} & {\sigma_2} &  {\sigma_4} & {\sigma_3} & {\sigma_4} & {\sigma_5}  \\
  {\phi_{\sigma_3}}   &  {\sigma_3} &  {\sigma_5} &  {\sigma_3}&  {\sigma_4} &  {\sigma_5} \\
  \phi_{\sigma_4} &  \sigma_4& \sigma_4&\sigma_3 &\sigma_4 &\sigma_5  \\
  \phi_{\sigma_5} & \sigma_5&\sigma_4 & \sigma_3&\sigma_4 &\sigma_5 
\end{array}
\hspace{.1cm} .
\end{align}
We note that, other than the identity $\phi_{\sigma_1}$, none of these self-fibrations is invertible. 

The  symmetries of the equations of motion of the fundamental networks can in turn be read off from these tables. They are given by: 
\begin{center}
 \begin{tabular}{p{3.6cm}}
 \multicolumn{1}{c}{\bf Network $\widetilde {\rm \bf A}$}\\
\hline
\vspace{-7mm} %
  \begin{equation} \hspace{-2mm}\nonumber
\begin{array}{c}
 \phi_{\sigma_1}^*(X)=(X_{\sigma_1}, X_{\sigma_2}, X_{\sigma_3})\\
\phi_{\sigma_2}^*(X)=(X_{\sigma_2}, X_{\sigma_3}, X_{\sigma_3})\\
\phi_{\sigma_3}^*(X)=(X_{\sigma_3}, X_{\sigma_3}, X_{\sigma_3})\\
 \multicolumn{1}{c}{}
 \end{array}
 \end{equation}
 \end{tabular}
 \hspace{.5cm}
\begin{tabular}{p{4.3cm}}
  \multicolumn{1}{c}{\bf Network $\widetilde {\rm \bf B}$}\\
\hline
\vspace{-7mm} %
  \begin{equation} \hspace{-2mm}\nonumber
 \begin{array}{c} 
\phi_{\sigma_1}^*(X)=(X_{\sigma_1}, X_{\sigma_2}, X_{\sigma_3}, X_{\sigma_4}) \\ 
\phi_{\sigma_2}^*(X)=(X_{\sigma_2}, X_{\sigma_4}, X_{\sigma_3}, X_{\sigma_4}) \\ 
\phi_{\sigma_3}^*(X)=(X_{\sigma_3}, X_{\sigma_4}, X_{\sigma_3}, X_{\sigma_4})\\
\phi_{\sigma_4}^*(X)=(X_{\sigma_4}, X_{\sigma_4}, X_{\sigma_3}, X_{\sigma_4})  
\end{array}
\end{equation}
 \end{tabular} 
 \begin{tabular}{p{5cm}} 
  \multicolumn{1}{c}{\bf Network $\widetilde {\rm \bf C}$}\\
\hline
\vspace{-7mm} %
  \begin{equation} \hspace{-2mm} \nonumber  
 \begin{array}{l}
 \phi_{\sigma_1}^*(X)=(X_{\sigma_1}, X_{\sigma_2}, X_{\sigma_3}, X_{\sigma_4}, X_{\sigma_5}) \\
\phi_{\sigma_2}^*(X)=(X_{\sigma_2}, X_{\sigma_4}, X_{\sigma_3}, X_{\sigma_4}, X_{\sigma_5}) \\ 
\phi_{\sigma_3}^*(X)=(X_{\sigma_3}, X_{\sigma_5}, X_{\sigma_3}, X_{\sigma_4}, X_{\sigma_5})\\
\phi_{\sigma_4}^*(X)=(X_{\sigma_4}, X_{\sigma_4}, X_{\sigma_3}, X_{\sigma_4}, X_{\sigma_5}) \\
\phi_{\sigma_5}^*(X)=(X_{\sigma_5}, X_{\sigma_4}, X_{\sigma_3}, X_{\sigma_4}, X_{\sigma_5}) 
 \end{array}\, .
 \end{equation}
\end{tabular}
 \end{center}\vspace{-3mm}
One may also check directly from the equations of motion that these maps send solutions to solutions. We remark that in network $\widetilde {\rm \bf C}$ the synchrony space $\{X_{\sigma_1}=X_{\sigma_3}, X_{\sigma_2}=X_{\sigma_5}\}$ (which is isomorphic to network ${\rm \bf C}$) is only invariant under the symmetries $\phi_{\sigma_1}^*$ and $\phi_{\sigma_3}^*$, which both act trivially on it. This confirms that network ${\rm \bf C}$ does not admit any nontrivial symmetries, while its fundamental lift does. Similarly, in network ${\rm\bf B}$, the synchrony space $\{X_{\sigma_2}=X_{\sigma_3}\}$ (which is isomorphic to network ${\rm\bf B}$) is only invariant under the trivial symmetry $\phi_{\sigma_1}^*$. Network ${\rm\bf A}$, on the other hand, is isomorphic to network $\widetilde {\rm \bf A}$, and is hence symmetric itself: it admits the full symmetry semigroup $\Sigma_{\rm\bf A}$.
\end{example}

\noindent The following result emphasises the geometric importance of the hidden symmetries of the fundamental network. It states that they determine its robust synchronies.\begin{theorem}\label{symmetrysynchrony}
Let $P=\{P_1, \ldots, P_r\}$ be a balanced partition of the cells $\Sigma_{\rm\bf N}$ of a homogeneous fundamental network $\widetilde {\rm\bf N}$ and let $\gamma: E_{\widetilde {\rm \bf N}}\to E_{\widetilde {\rm \bf N}}$ be any $\Sigma_{\rm \bf N}$-equivariant map. Then $$\gamma({\rm Syn}_P)\subset {\rm Syn}_P\, .$$
\end{theorem}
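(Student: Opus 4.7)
The plan is to prove the theorem by reducing the synchrony condition on $\gamma(X)$ to an identity of the form $\phi_{\sigma_i}^*X = \phi_{\sigma_j}^*X$ whenever $\sigma_i \sim_P \sigma_j$ and $X \in {\rm Syn}_P$, and then exploiting the $\Sigma_{\rm\bf N}$-equivariance of $\gamma$ to conclude. The key observation is that evaluating a self-fibration pullback at the identity element $\sigma_1$ of $\Sigma_{\rm\bf N}$ simply picks off the $\sigma_i$-component: by definition, $(\phi_{\sigma_i}^*Y)_{\sigma_1} = Y_{\sigma_1 \circ \sigma_i} = Y_{\sigma_i}$.

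The first step is to upgrade the balanced condition from the generators $\widetilde\sigma_1,\ldots,\widetilde\sigma_m$ to all $\widetilde\sigma_k$ with $\sigma_k \in \Sigma_{\rm\bf N}$. By Remark \ref{balancedremark} applied to $\widetilde{\rm\bf N}$, the partition being balanced means that $\sigma_i \sim_P \sigma_j$ implies $\sigma_k \circ \sigma_i \sim_P \sigma_k \circ \sigma_j$ for every generator $\sigma_k$, $k=1,\ldots,m$. A straightforward induction on the length of a word in the generators, using that $\Sigma_{\rm\bf N}$ is generated (as a semigroup with unit) by $\sigma_1,\ldots,\sigma_m$, extends this to: $\sigma_i \sim_P \sigma_j$ implies $\sigma_k \circ \sigma_i \sim_P \sigma_k \circ \sigma_j$ for \emph{every} $\sigma_k \in \Sigma_{\rm\bf N}$.

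The second step exploits this to show that $\phi_{\sigma_i}^*X = \phi_{\sigma_j}^*X$ for any $X \in {\rm Syn}_P$ and any pair with $\sigma_i \sim_P \sigma_j$. Indeed, component-wise
\[
(\phi_{\sigma_i}^*X)_{\sigma_k} = X_{\sigma_k \circ \sigma_i}
\quad\text{and}\quad
(\phi_{\sigma_j}^*X)_{\sigma_k} = X_{\sigma_k \circ \sigma_j},
\]
and by the previous step $\sigma_k \circ \sigma_i \sim_P \sigma_k \circ \sigma_j$, so the synchrony condition $X \in {\rm Syn}_P$ forces equality of these two components for every $\sigma_k \in \Sigma_{\rm\bf N}$.

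The final step combines this with equivariance. For $X \in {\rm Syn}_P$ and $\sigma_i \sim_P \sigma_j$, using $\phi_{\sigma_i}^* \circ \gamma = \gamma \circ \phi_{\sigma_i}^*$ and the identity $(\cdot)_{\sigma_1}$ trick,
\[
(\gamma(X))_{\sigma_i} = (\phi_{\sigma_i}^*\gamma(X))_{\sigma_1} = (\gamma(\phi_{\sigma_i}^*X))_{\sigma_1} = (\gamma(\phi_{\sigma_j}^*X))_{\sigma_1} = (\phi_{\sigma_j}^*\gamma(X))_{\sigma_1} = (\gamma(X))_{\sigma_j},
\]
so $\gamma(X) \in {\rm Syn}_P$. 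The only point that requires any real thought is the first step, the propagation of the balanced condition from generators to all of $\Sigma_{\rm\bf N}$; the remainder is a bookkeeping exercise in the definitions of $\phi_{\sigma_i}^*$, ${\rm Syn}_P$ and equivariance.
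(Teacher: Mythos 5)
Your proof is correct and follows essentially the same route as the paper's: both arguments hinge on evaluating the equivariance relation at the unit $\sigma_1$ to express $(\gamma(X))_{\sigma_i}$ as $\gamma_{\sigma_1}(\phi_{\sigma_i}^*X)$, and on propagating the balanced condition from the generators $\widetilde\sigma_1,\ldots,\widetilde\sigma_m$ to all of $\Sigma_{\rm\bf N}$ via closure under composition. The only cosmetic difference is that the paper packages the conclusion by viewing $\gamma$ as a network map for the extended network with input maps $\widetilde\sigma_1,\ldots,\widetilde\sigma_n$ and then citing the balanced-partition characterisation, whereas you compare $\phi_{\sigma_i}^*X$ and $\phi_{\sigma_j}^*X$ directly on ${\rm Syn}_P$.
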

\begin{proof}
Assume that $\gamma: E_{\widetilde {\rm \bf N}}\to E_{\widetilde {\rm \bf N}}$ is $\Sigma_{\rm \bf N}$-equivariant, i.e. that $\gamma\circ \phi^*_{\sigma_i} =  \phi^*_{\sigma_i}\circ \gamma$ for all $\sigma_i\in \Sigma_{\rm \bf N}$. This implies that 
\begin{align}\nonumber 
& \gamma_{\sigma_i}(X) = \gamma_{\sigma_1\circ\sigma_i}(X) = \left(\phi_{\sigma_i}^*\gamma\right)_{\sigma_1}\!\!(X) = \left(\gamma\circ\phi_{\sigma_i}^*\right)_{\sigma_1}\!\!(X) =\\ \nonumber & \gamma_{\sigma_1}(X_{\sigma_1\circ\sigma_i}, \ldots, X_{\sigma_n\circ\sigma_i})=\gamma_{\sigma_1}(X_{\widetilde \sigma_1(\sigma_i)}, \ldots, X_{\widetilde \sigma_n(\sigma_i)} ) \, .
\end{align}
In other words, $\gamma$ is a homogeneous network vector field with response function $\gamma_{\sigma_1}$ on the network with vertex set $\Sigma_{\rm \bf N}$ and with input maps $\widetilde \sigma_1, \ldots, \widetilde \sigma_n$. Note that this does not imply that $\gamma$ is a network vector field for the fundamental network $\widetilde {\rm \bf N}$, for which the input maps are $\widetilde \sigma_1, \ldots, \widetilde \sigma_m$ (recall that $m$ may be strictly less than $n$ in general).

Now recall from Remark \ref{balancedremark} that $P$ is a balanced partition if and only if for all $1\leq j\leq m$ and $1\leq k\leq r$ there is an $1\leq l\leq r$ such that $\widetilde \sigma_j(P_k)\subset P_l$ (that is if $\widetilde \sigma_1, \ldots, \widetilde\sigma_m$ preserve the partition). But the $\widetilde \sigma_j$ with $m+1\leq j\leq n$ are all of the form $\widetilde \sigma_j = \widetilde \sigma_{j_1}\circ \ldots \circ \widetilde \sigma_{j_q}$ for $1\leq j_1, \ldots, j_q\leq m$. Hence all the $\widetilde \sigma_1, \ldots, \widetilde \sigma_n$ preserve the partition and the partition is automatically balanced for the extended network with  input maps $\widetilde \sigma_1, \ldots, \widetilde \sigma_n$. In particular, ${\rm Syn}_P$ is invariant under $\gamma$.
\end{proof}

\begin{remark}
Let $\phi: {\rm \bf N}\to {\rm \bf N}$ be a self-fibration of a network and let $\gamma: E_{\rm\bf N}\to E_{\rm \bf N}$ be an equivariant map, i.e. $\phi^*\circ\gamma=\gamma\circ\phi^*$.  Then ${\rm Fix}\, \phi^* := \{x\in  E_{\rm\bf N}\, |\, \phi^*x=x\}$ is an example of an invariant subspace for $\gamma$, because $\phi^*(\gamma(x))=\gamma(\phi^*(x)) = \gamma(x)$ if $\phi^*(x)=x$. This is how invertible network symmetries (those that form the symmetry group of the network) yield invariant subspaces in a network dynamical system. 

But when $\phi$ is not invertible, then one can imagine many more invariant subspaces induced by symmetry. For example, the image ${\rm im}\, \phi^*$ of $\phi^*$ and the inverse image $(\phi^*)^{-1}(W)$ of a $\gamma$-invariant subspace $W$ are invariant under the dynamics of $\gamma$.
\end{remark}

\begin{remark}\label{symmetrysynchronyremark}
Recall from Remark \ref{restrictionremark} that we may think of the phase space $E_{{\rm \bf N}_{(v)}}$ of the input network ${\rm\bf N}_{(v)}$ as a robust synchrony space in the phase space $E_{\widetilde {\rm\bf N}}$ of the fundamental network $\widetilde {\rm \bf N}$. It holds that $\gamma_f^{{\rm \bf N}_{(v)}} =  \gamma_f^{\widetilde {\rm\bf N}}|_{E_{{\rm\bf N}_{(v)}}}$ and hence every robust synchrony space ${\rm Syn}_P \subset E_{{\rm \bf N}_{(v)}}$ for the dynamics of ${\rm \bf N}_{(v)}$ is also a robust synchrony space for the dynamics of $\widetilde {\rm\bf N}$. Theorem \ref{symmetrysynchrony} states that not only the class of network maps $\gamma_f^{\widetilde {\rm\bf N}}: E_{\widetilde {\rm\bf N}}\to E_{\widetilde {\rm\bf N}}$ leaves $E_{{\rm\bf N}_{(v)}}$ and ${\rm Syn}_P\subset E_{{\rm\bf N}_{(v)}}$ invariant, but the possibly much larger class of $\Sigma_{\rm\bf N}$-equivariant maps $\gamma: E_{\widetilde {\rm\bf N}}\to E_{\widetilde {\rm\bf N}}$ does so as well. 
Thus, one could argue for homogeneous networks that robust synchrony is not caused by network structure, but by the more general phenomenon that the network is embedded inside a (possibly larger) network with symmetries. We will see in Sections \ref{interiorsection} and \ref{nonhomogeneoussection} that the same is true for networks with ``interior symmetries'' and for nonhomogeneous networks. One may conjecture that robust synchrony is always a consequence of hidden symmetry, even in networks with nontrivial symmetry groupoids.
\end{remark}

\section{The hidden symmetry perspective}\label{perspectivesection}
Every homogeneous network is embedded in a network with semigroup symmetry, and this explains some of the most important structural features of homogeneous network dynamical systems: their robust synchronies. We will see in Sections \ref{interiorsection} and \ref{nonhomogeneoussection} that the same is true for networks with interior symmetries and for nonhomogeneous networks with a trivial symmetry groupoid. But symmetry and hidden symmetry may cause many more of the intriguing phenomena that have been observed in networks, and that can not be explained from the existence of robust synchrony alone. These phenomena range from the existence  of multirythms \cite{romano} to the emergence of synchronous chaos \cite{pikovsky}, and also include ``anomalous'' synchrony breaking bifurcations. For instance, semigroup symmetry forces spectral degeneracies at local bifurcations, see \cite{RinkSanders3}. As a result, bifurcations of networks that may be conceived as anomalous at first sight, may turn out generic in certain classes of semigroup equivariant dynamical systems. This is true, for example, for the synchrony breaking  bifurcations in networks ${\rm\bf A}, {\rm \bf B}$ and ${\rm\bf C}$ that were discussed in Section \ref{examplessection}. In addition, (hidden) symmetry is easier to incorporate in the analysis of network systems than ``network structure'' - if only because hidden symmetry is not lost under coordinate changes, and is therefore an intrinsic property of a dynamical system. Thus, the analysis of networks may become simpler when their (hidden) symmetries are taken into account.  

All this suggests adopting a ``hidden symmetry perspective'' towards network dynamics: many network systems are special examples of dynamical systems with (hidden) symmetries, and one may organise the analysis of these networks around their hidden symmetries.  We remark that a similar perspective has been very fruitful for our understanding of dynamical systems with ``classical'' symmetries \cite{field4, perspective, golschaef2}. In particular,  many generic phenomena in dynamical systems with compact symmetry groups have been classified, and there exists a well-developed theory of local bifurcations for dynamical systems with compact symmetry groups. This theory relies on representation theory, equivariant singularity theory, and (group-)equivariant counterparts of the most important methods from local bifurcation theory, such as {\it normal form reduction, Lyapunov-Schmidt reduction} and {\it centre manifold reduction}. Neither of these theories and methods admits a natural generalisation to systems with a network structure. On the other hand, it turns out that (hidden) semigroup symmetry can be preserved in all three aforementioned reduction methods. For normal form reduction this was essentially proved in \cite{CCN}, and for Lyapunov-Schmidt reduction in \cite{RinkSanders3}. For centre manifold reduction the situation is more technical. How semigroup symmetry affects a centre manifold, is the topic of a paper that we are currently finishing.

\section{Hidden symmetry in local bifurcations}\label{methodssection}
It is not our goal to develop the local bifurcation theory of dynamical systems with semigroup symmetry any further in this paper. Instead, we shall briefly sketch now how hidden symmetry can impact local bifurcations, at the hand of our example networks {\bf A}, {\bf B} and {\bf C}. We claim that the synchrony breaking bifurcations in these networks that were discussed in Section \ref{examplessection}, are determined by hidden symmetry, and we will sketch how this can be proved. We stress that this section is only meant as an illustration of the importance of hidden symmetry for the synchrony breaking behaviour of networks. Several claims that are made in this section have been or will be proved elsewhere. 

We start with recalling some general theory from \cite{RinkSanders3}. First of all, when $\Sigma$ is a semigroup and $W$ a finite dimensional real vector space, then we call a map
$$A: \Sigma \to \mathfrak{gl}(W)\ \mbox{for which}\ A_{\sigma_i}\circ A_{\sigma_j} = A_{\sigma_i\circ \sigma_j}\ \mbox{for all} \ \sigma_i, \sigma_j\in\Sigma$$ 
a {\it representation} of the semigroup $\Sigma$ in $W$. A subspace $W_1\subset W$ is called a {\it subrepresentation} of $W$ if $A_{\sigma_i}(W_1)\subset W_1$ for all $\sigma_i\in \Sigma$. The smallest subrepresentations that build up a given representation, have a special name:
\begin{definition}
A subrepresentation $W_1\subset W$ of $\Sigma$ is called {\it indecomposable} if $W_1$ is not a direct sum $W_1=W_2\oplus W_3$ with $W_2$ and $W_3$ both nonzero subrepresentations of $W_1$. 
\end{definition}
Unlike so-called {\it irreducible} subrepresentations, indecomposable subrepresentations may contain nontrivial subrepresentations, but these  can then not be complemented by another nontrivial subrepresentation. By definition, every representation is a direct sum of indecomposable subrepresentations. Moreover, by the Krull-Schmidt theorem  \cite{RinkSanders3}, the decomposition of a representation into indecomposable subrepresentations is unique up to isomorphism.

When $A:\Sigma \to \mathfrak{gl}(W)$ is a representation and $L: W\to W$ is a linear map so that 
$$L\circ A_{\sigma_j} = A_{\sigma_j}\circ L \ \mbox{for all}\ \sigma_j\in \Sigma \, ,$$
then we call $L$ an {\it endomorphism} of $W$ and write $L\in {\rm End}(W)$. 
\begin{remark}
Recall from Theorem \ref{symmetrytheorem} that each fundamental network map $\gamma_f^{\widetilde {\rm\bf N}}: E_{\widetilde {\rm\bf N}}\to  E_{\widetilde {\rm\bf N}}$ is $\Sigma_{\rm\bf N}$-equivariant, i.e. $\gamma_f^{\widetilde {\rm\bf N}}\circ\phi^*_{\sigma_i} = \phi^*_{\sigma_i}\circ \gamma_f^{\widetilde {\rm\bf N}}$ for all $\sigma_i\in \Sigma_{\rm\bf N}$. Differentiation of this identity at a fully synchronous (and hence fixed by $\Sigma_{\rm\bf N}$) point (say $X=0$) yields that
$$L\circ \phi^*_{\sigma_i} = \phi^*_{\sigma_i}\circ L\ \mbox{for}\ L:= D_X\gamma_f^{\widetilde {\rm\bf N}}(0)\, .$$
In other words, the linearisation of a fundamental network map at a fully synchronous point is an example of an endomorphism of the representation of $\Sigma_{\rm\bf N}$ in $E_{\widetilde {\rm\bf N}}$.
\end{remark} 
When $\lambda\in \R$ is an eigenvalue of an endomorphism $L\in{\rm End}(W)$, the generalised eigenspace
$$\mathbb{E}_{\lambda}:={\rm ker}\, (L-\lambda {\rm Id}_W)^{{\rm dim}\, W}$$ 
is a subrepresentation of $W$, and the same is true for the real generalised eigenspaces 
of the complex eigenvalues  of $L$. It follows that 
the (unique) splitting of $W$ into indecomposable subrepresentations determines to a large extent the spectral properties of its endomorphisms, 
and this explains how symmetry and hidden symmetry can force the linearisation matrix of a network map to have a degenerate spectrum. See \cite{RinkSanders3} for more precise statements on the relation between indecomposable subrepresentations and the spectrum of endomorphisms. 

\begin{example}\label{examplelinearstuff}
Recall the fundamental network maps $\gamma_f^{\widetilde {\rm\bf A}}, \gamma_f^{\widetilde {\rm\bf B}}$ and $\gamma_f^{\widetilde {\rm\bf C}}$ given in Figure \ref{pictfundamental}. Assume now that the cells in the networks are $1$-dimensional (that is $X_{\sigma_i}\in \R$ for all $\sigma_i$). Then the linearisation  $L_{\widetilde {\rm\bf A}} := D_X\gamma_f^{\widetilde {\rm\bf A}}(0;0)$ has the form (writing $a:=D_1f(0;0)\in \R$ etc.)
\begin{align}\nonumber
L_{\widetilde {\rm\bf A}} = 
{   \left( 
\begin{array}{rrr} 
a  & \bl{b} & \ro{c}\\
 0 & a & \bl{b}+\ro{c} \\
0&0& a+\bl{b}+\ro{c} 
\end{array}\right) }
 \, .
  \end{align} 
When $\bl{b}+\ro{c}\neq 0$,  then $L_{\widetilde {\rm\bf A}}$ has an eigenvalue $a+\bl{b}+\ro{c}$ with algebraic and geometric multiplicity $1$ and an eigenvalue $a$ with algebraic multiplicity $2$ and geometric multiplicity $1$. The generalised eigenspaces of $L_{\widetilde {\rm\bf A}}$ are 
\begin{align}\nonumber
 & \mathbb{E}_{a+\bl{b}+\ro{c}} = \{X_{\sigma_1}=X_{\sigma_2}=X_{\sigma_3}\} \ \mbox{and} \\ \nonumber
 & \mathbb{E}_{a} = \{X_{\sigma_3}=0\}\, .
 \end{align}
Recall that $L_{\widetilde {\rm\bf A}}$ is an endomorphism of the representation of  $\Sigma_{\rm\bf A}$ (this representation was given in Example \ref{symmetriesexample}). It turns out that $ \mathbb{E}_{a+\bl{b}+\ro{c}}$ and $ \mathbb{E}_{a}$ both are indecomposable subrepresentations of $\Sigma_{\rm\bf A}$. Because the splitting of a representation into indecomposable summands is unique up to isomorphism, it follows that every endomorphism of $\Sigma_{\rm\bf A}$ can have at most $2$ generalised eigenspaces. Thus, the spectral degeneracy of $L_{\widetilde {\rm\bf A}}$ is a consequence of symmetry. Because networks ${\rm\bf A}$ and $\widetilde {\rm\bf A}$ are isomorphic, the double degeneracy of the eigenvalue $a$ in network ${\rm \bf A}$ is also a result of $\Sigma_{\rm\bf A}$-equivariance.

Similar considerations apply to $\widetilde {\rm\bf B}$ and $\widetilde {\rm\bf C}$. 
The linearisation $L_{\widetilde {\rm\bf B}}:=D_X\gamma_f^{\widetilde {\rm\bf B}}(0;0)$ reads
\begin{align}\nonumber
L_{\widetilde {\rm\bf B}} = 
{   \left( \begin{array}{rrrr} 
a  & \bl{b} & \ro{c}& 0\\
 0 & a & \ro{c}&  \bl{b} \\
0&0& a+\ro{c}&\bl{b}\\
0 & 0 & \ro{c} & a+\bl{b}  
 \end{array}\right) }\, .
  \end{align}
When $\bl{b}+\ro{c}\neq 0$, its generalised eigenspaces are
 \begin{align}\nonumber
 & \mathbb{E}_{a+\bl{b}+\ro{c}} = \{X_{\sigma_1}=X_{\sigma_2}=X_{\sigma_3}=X_{\sigma_4}\} \ \mbox{and} \\ \nonumber
 & \mathbb{E}_{a} = \{\ro{c}X_{\sigma_3} + \bl{b}X_{\sigma_4}  =0\}\, .
 \end{align} 
  Both are indecomposable subrepresentations of $\Sigma_{\rm\bf B}$. 
  In addition, equivariance implies that $L_{\widetilde {\rm\bf B}}$ leaves the synchrony space $\{X_{\sigma_2}=X_{\sigma_3}\}$ (that is, network ${\rm\bf B}$) invariant. This synchrony space intersects $\mathbb{E}_{a}$ in a $2$-dimensional subspace, and this explains the double degeneracy of the eigenvalue $a$ in network {\bf B}.
 Finally, 
 the linearisation matrix $L_{\widetilde {\rm\bf C}}:=D_X\gamma_f^{\widetilde {\rm\bf C}}(0;0)$  is
\begin{align}\nonumber
\label{matrixtildeC}\hspace{-.5cm}  L_{\widetilde {\rm\bf C}} = 
{   \left( \begin{array}{rrrrr} 
a  & \bl{b} & \ro{c}& 0& 0\\
 0 & a & \ro{c}&  \bl{b} &0 \\
0&0& a+\ro{c}&0& \bl{b}\\
0 & 0 & \ro{c} & a+\bl{b} &0  \\
0 & 0 & \ro{c} & \bl{b} & a
 \end{array}\right) }\, .
  \end{align} 
  When $\bl{b}+\ro{c}\neq 0$, it has generalised eigenspaces
 \begin{align}\nonumber
 & \mathbb{E}_{a+\bl{b}+\ro{c}} = \{X_{\sigma_1}=X_{\sigma_2}=X_{\sigma_3}=X_{\sigma_4}=X_{\sigma_5}\} \ \mbox{and} \\ \nonumber
 & \mathbb{E}_{a} = \{\ro{c}(\bl{b}+\ro{c})X_{\sigma_3} + \bl{b}^2X_{\sigma_4} + \bl{b}\ro{c}X_{\sigma_5}=0\}\, .
 \end{align}
  The degenerate eigenvalue $a$ now has algebraic multiplicity $4$ and geometric multiplicity $1$. Both generalised eigenspaces are indecomposable subrepresentations of $\Sigma_{\rm\bf C}$. Moreover, $\mathbb{E}_a$ intersects the robust synchrony space $\{X_{\sigma_1}=X_{\sigma_3}, X_{\sigma_2}=X_{\sigma_5}\}$ (that is, network ${\rm\bf C}$) in a two-dimensional subspace. This explains the double degeneracy of the eigenvalue $a$ in network ${\rm\bf C}$.
  \end{example}
 Hidden symmetries do not only affect the linear, but also the nonlinear terms of network maps. One can therefore expect different nonlinear dynamics and bifurcations in networks with non-isomorphic (hidden) symmetry semigroups. Indeed, this is what explains the different character of the synchrony breaking bifurcations in networks {\bf A}, {\bf B} and {\bf C}. 
 
 These bifurcations can be investigated with various classical methods, including normal form reduction, centre manifold reduction and Lyapunov-Schmidt reduction. We will use the remainder of this section to sketch how Lyapunov-Schmidt reduction (which is perhaps the simplest of these methods) can predict the local asymptotics of the synchrony breaking steady state branches of a fundamental network $\widetilde {\rm \bf N}$. In principle, information about the stability of solution branches can not be obtained with this method.
 
 So let us study the steady states of a parameter dependent fundamental network map 
$$\gamma_f^{\widetilde {\rm\bf N}}: E_{\widetilde {\rm\bf N}}\times \Lambda \to E_{\widetilde {\rm\bf N}}\ \mbox{with}\ \Lambda\subset \R^p \ \mbox{an open set of parameters},$$
near a given synchronous steady state (say $X=0$) and given parameter value (say $\lambda=0$). Thus, we assume that $\gamma_f^{\widetilde {\rm\bf N}}(0;0)=0$. Synchrony breaking can occur when the linearisation
 $L:=D_X\gamma_f^{\widetilde {\rm\bf N}}(0;0)$ is nonsynchronously degenerate, i.e. when
 $$\mathbb{E}_0:= {\rm gen\ ker}\, L \not\subset \{X_{\sigma_1}=\ldots=X_{\sigma_n}\}\, .$$
Lyapunov-Schmidt reduction is a method to reduce the steady state equation $\gamma_f^{\widetilde {\rm\bf N}}(X;\lambda)=0$, locally near $(X;\lambda)=(0;0)$, to an equivalent equation of the form 
 $$F(X;\lambda)=0\ \mbox{for}\ F:\mathbb{E}_0\times \Lambda \to\mathbb{E}_0\ \mbox{defined near}\ (0;0).$$ 
It was proved in \cite{RinkSanders3} that it can be arranged that this $F$ inherits $\Sigma_{\rm\bf N}$-equivariance from $\gamma_f^{\widetilde {\rm\bf N}}$ (recall that $\Sigma_{\rm\bf N}$ restricts to a representation on $\mathbb{E}_0$). 
Equivariance now imposes restrictions on $F$ that impact the solutions of the reduced bifurcation equation $F(X;\lambda)=0$.  

Moreover, if ${\rm Syn}_P\subset E_{\widetilde {\rm\bf N}}$ is any robust synchrony space, then equivariance implies that
 $$F(\mathbb{E}_0\cap{\rm Syn}_P; \lambda) \subset \mathbb{E}_0\cap{\rm Syn}_P\, ,$$
even when $\mathbb{E}_0\cap{\rm Syn}_P$ is not a subrepresentation of $\Sigma_{\rm\bf N}$. In this way, Lyapunov-Schmidt reduction replaces the problem of finding synchronous steady states of $\gamma_f^{\widetilde {\rm\bf N}}$ by the problem of  finding zeroes of 
$$F: \mathbb{E}_0\cap {\rm Syn}_P\times \Lambda \to \mathbb{E}_0\cap {\rm Syn}_P\, .$$ 
This   may entail a considerable dimension reduction of the bifurcation problem.

We shall now illustrate how these observations can be used to predict the asymptotics of generic synchrony breaking steady state branches in networks $\widetilde {\rm\bf A}$, $\widetilde {\rm\bf B}$ and $\widetilde {\rm\bf C}$. 

\begin{example}
Example \ref{examplelinearstuff} shows that network $\widetilde {\rm\bf A}$ can only break synchrony when $a=D_1f(0;0)=0$. Assuming that $a=0$ and $\bl{b}+\ro{c} \neq 0$, it holds that
$$\mathbb{E}_0= \{  X_{\sigma_3} = 0 \}\, .$$
Let us coordinatise $\mathbb{E}_0$ with the variables $(X_{\sigma_1}, X_{\sigma_2})$, and accordingly write $F=(F_{\sigma_1}, F_{\sigma_2})$. In these coordinates, the action of $\Sigma_{\rm\bf A}$ (see Example \ref{symmetriesexample}) on $\mathbb{E}_0$ is given by
\begin{align}\nonumber
& \phi^*_{\sigma_1}(X_{\sigma_1}, X_{\sigma_2}) = (X_{\sigma_1}, X_{\sigma_2})\, ,\\ \nonumber
& \phi^*_{\sigma_2}(X_{\sigma_1}, X_{\sigma_2}) = (X_{\sigma_2}, 0)\, ,\\ \nonumber
& \phi^*_{\sigma_3}(X_{\sigma_1}, X_{\sigma_2}) = (0, 0)\, .
\end{align}
The equivariance of $F$ under $\phi^*_{\sigma_2}$ now gives the identities
\begin{align}\nonumber
F_{\sigma_2}(X_{\sigma_1}, X_{\sigma_2}; \lambda) &= (\phi^*_{\sigma_2}F)_{\sigma_1}(X_{\sigma_1}, X_{\sigma_2}; \lambda) = F_{\sigma_1}(\phi_{\sigma_2}^*(X_{\sigma_1}, X_{\sigma_2}); \lambda) = F_{\sigma_1}(X_{\sigma_2},0; \lambda) \, ,\\ \nonumber
 \mbox{and} \ 0 &= (\phi^*_{\sigma_2}F)_{\sigma_2}(X_{\sigma_1}, X_{\sigma_2}; \lambda) = F_{\sigma_2}(\phi_{\sigma_2}^*(X_{\sigma_1}, X_{\sigma_2}); \lambda) = F_{\sigma_2}(X_{\sigma_2},0; \lambda)\, .
\end{align}
In other words, the map $F$ is of the  form 
$$F(X_{\sigma_1}, X_{\sigma_2};\lambda) = (F_{\sigma_1}(X_{\sigma_1}, X_{\sigma_2};\lambda), F_{\sigma_1}(X_{\sigma_2}, 0;\lambda))\ \mbox{with}\ F_{\sigma_1}(0,0;\lambda)=0\, .$$ 
Also, every $F$ that is of this form (for some smooth function $F_{\sigma_1}$) is $\Sigma_{\rm\bf A}$-equivariant. Because the bifurcation equation $F=0$ has a special form, 
 one may expect its local solutions to have a special structure as well. Indeed, when $\lambda\in \Lambda := \R$ and $F_{\sigma_1}$ admits the generic expansion
\begin{align}\nonumber
F_{\sigma_1}(X_{\sigma_1}, X_{\sigma_2}; \lambda) & = \alpha \lambda X_{\sigma_1} \!+\! \bl{b} X_{\sigma_2}  \!+\! AX_1^2  \! + \! \mathcal{O}( |X_{\sigma_1}|\! \cdot\! |\lambda|^2 \!+\! |X_{\sigma_1}|^3 \!+\! |X_{\sigma_2}|\!\cdot\! ||X|| \!+\!  |X_{\sigma_2}|\!\cdot\! |\lambda| )\, ,
\end{align}
then it follows that 
 $$F_{\sigma_2}(X_{\sigma_1}, X_{\sigma_2};\lambda)=  \alpha\lambda X_{\sigma_2} + AX_{\sigma_2}^2 +\mathcal{O}(|X_{\sigma_2}| \cdot |\lambda|^2 +   |X_{\sigma_2}|^3) \, . $$
Under the nondegeneracy conditions that $\alpha, \bl{b}, A\neq 0$, the equation $F_{\sigma_2}=0$   yields that $X_{\sigma_2}=0$ or $X_{\sigma_2} = -\frac{\alpha}{A}\lambda+\mathcal{O}(\lambda^2)$. In the first case, the equation $F_{\sigma_1}=0$   gives that either $X_{\sigma_1}=0$ or $X_{\sigma_1} = -\frac{\alpha}{A}\lambda+\mathcal{O}(\lambda^2)$. In the second case, we find that $X_{\sigma_1}= \pm \sqrt{\frac{\bl{b}\alpha}{A^2}\lambda} + \mathcal{O}(\lambda)$. As a result, one can expect network $\widetilde {\rm\bf A}$ to 
generically support three solution branches near $(X; \lambda)=(0; 0)$. They have the asymptotics
\begin{align}\nonumber
& X_{\sigma_1}=X_{\sigma_2}=X_{\sigma_3}=0\, ,\\ \nonumber & X_{\sigma_1}\sim \lambda, X_{\sigma_2} = X_{\sigma_3} = 0\ \mbox{and} \\ \nonumber & X_{\sigma_1}\sim\pm\sqrt{\lambda}, X_{\sigma_2}\sim\lambda, X_{\sigma_3}=0\, .
\end{align}
These branches lie on one $\Sigma_{\rm\bf A}$-orbit. Moreover, recalling from Example \ref{pictfundamental1} that $x_{v_1}=X_{\sigma_3}, x_{v_2}=X_{\sigma_2}, x_{v_3}=X_{\sigma_1}$, this shows that the bifurcation in network ${\rm\bf A}$ displayed in Table \ref{table}, is a generic equivariant bifurcation. A proof of this can also be found in \cite{RinkSanders2, RinkSanders3}. 
\end{example} 

 \begin{example}
 Also network $\widetilde {\rm\bf B}$ can only break
   synchrony when $a=0$. Assuming this and $\bl{b}+\ro{c} \neq 0$, we recall from Example \ref{examplelinearstuff} that
$$\mathbb{E}_0= \{\ro{c} X_{\sigma_3} + \bl{b} X_{\sigma_4}  = 0 \}\, .$$
When $\bl{b}\neq 0$,   we may coordinatise $\mathbb{E}_0$ by $(X_{\sigma_1}, X_{\sigma_2}, X_{\sigma_3})$, letting $X_{\sigma_4}=  -\frac{\ro{c}}{\bl{b}}X_{\sigma_3}$.
Similarly we coordinatise $F: \mathbb{E}_0\times \R \to\mathbb{E}_0$ as $F =(F_{\sigma_1} , F_{\sigma_2}, F_{\sigma_3})$.
The action of $\Sigma_{\rm\bf B}$ on $\mathbb{E}_0$ is then
\begin{align}\nonumber
& \phi^*_{\sigma_1}(X_{\sigma_1}, X_{\sigma_2}, X_{\sigma_3}) = (X_{\sigma_1}, X_{\sigma_2}, X_{\sigma_3})\, ,\\ \nonumber
& \phi^*_{\sigma_2}(X_{\sigma_1}, X_{\sigma_2}, X_{\sigma_3}) = (X_{\sigma_2}, -\frac{\ro{c}}{\bl{b}}X_{\sigma_3}, X_{\sigma_3})\, ,\\ \nonumber
& \phi^*_{\sigma_3}(X_{\sigma_1}, X_{\sigma_2}, X_{\sigma_3}) = (X_{\sigma_3}, -\frac{\ro{c}}{\bl{b}}X_{\sigma_3}, X_{\sigma_3})\, ,\\ \nonumber
& \phi^*_{\sigma_4}(X_{\sigma_1}, X_{\sigma_2}, X_{\sigma_3}) = (-\frac{\ro{c}}{\bl{b}}X_{\sigma_3}, -\frac{\ro{c}}{\bl{b}}X_{\sigma_3}, X_{\sigma_3})\, .
\end{align}
The equivariance of $F$ implies among others that 
\begin{align}\nonumber
 F_{\sigma_2}(X_{\sigma_1}, X_{\sigma_2}, X_{\sigma_3}; \lambda) = & (\phi_{\sigma_2}^*F)_{\sigma_1}(X_{\sigma_1}, X_{\sigma_2}, X_{\sigma_3}; \lambda)  =  \\  & F_{\sigma_1}(\phi_{\sigma_2}^*(X_{\sigma_1}, X_{\sigma_2}, X_{\sigma_3}); \lambda)=F_{\sigma_1}(X_{\sigma_2}, -\frac{\ro{c}}{\bl{b}}X_{\sigma_3}, X_{\sigma_3}; \lambda) \ \mbox{and} \nonumber \\ \nonumber 
  F_{\sigma_3}(X_{\sigma_1}, X_{\sigma_2}, X_{\sigma_3}; \lambda) = & (\phi_{\sigma_3}^*F)_{\sigma_1}(X_{\sigma_1}, X_{\sigma_2}, X_{\sigma_3}; \lambda)  =  \\  & F_{\sigma_1}(\phi_{\sigma_3}^*(X_{\sigma_1}, X_{\sigma_2}, X_{\sigma_3}); \lambda)=F_{\sigma_1}(X_{\sigma_3}, -\frac{\ro{c}}{\bl{b}}X_{\sigma_3}, X_{\sigma_3}; \lambda) \, . \nonumber
\end{align}
In particular, it holds that $F_{\sigma_2}=F_{\sigma_3}$ if $X_{\sigma_2}=X_{\sigma_3}$ and we see that $F$ leaves the robust synchrony space $\mathbb{E}_0\cap \{X_{\sigma_2}=X_{\sigma_3}\}$ (that is, network ${\rm\bf B}$) invariant. Moreover, the remaining restrictions on $F_{\sigma_1}, F_{\sigma_2}, F_{\sigma_3}$ imposed by equivariance  can be formulated as restrictions on $F_{\sigma_1}$. It turns out that they all reduce to a single additional restriction:
$$-\frac{\ro{c}}{\bl{b}}F_{\sigma_1}(X_{\sigma_3}, -\frac{\ro{c}}{\bl{b}}X_{\sigma_3}, X_{\sigma_3};\lambda) = F_{\sigma_1}(-\frac{\ro{c}}{\bl{b}}X_{\sigma_3}, -\frac{\ro{c}}{\bl{b}}X_{\sigma_3}, X_{\sigma_3};\lambda) \, .$$
Zeroes of $F$ inside $\{X_{\sigma_2}=X_{\sigma_3}\}$ thus correspond to zeroes of  
\begin{align}
 \nonumber
 G(X_{\sigma_1}, X_{\sigma_2}; \lambda) &:= \left( \begin{array}{ll} F_{\sigma_1}(X_{\sigma_1}, X_{\sigma_2}, X_{\sigma_2};\lambda ) \\ F_{\sigma_1}(X_{\sigma_2}, -\frac{\ro{c}}{\bl{b}}X_{\sigma_2}, X_{\sigma_2}; \lambda) \end{array}\right) \, ,
 \end{align}
with the above restriction on the otherwise arbitrary function $F_{\sigma_1}$. If we assume for instance that $\lambda \in \Lambda :=\R$ and that $F_{\sigma_1}$ admits the generic expansion
\begin{align}
 F_{\sigma_1}&(X_{\sigma_1}, X_{\sigma_2}, X_{\sigma_3};\lambda)  = \alpha \lambda X_{\sigma_1} + (\bl{b}+\beta\lambda) X_{\sigma_2} + (\ro{c}+\gamma\lambda) X_{\sigma_3} + AX_1^2 + BX_1X_2 +CX_2^2 + \nonumber \\ \nonumber  
& D X_1X_3 +EX_2X_3 +FX_3^2 +  \mathcal{O}( ||X||^3+  |\lambda| \cdot ||X||^2+  |\lambda|^2 \cdot ||X|| )\, ,
\end{align}
then it follows from the condition on $F_{\sigma_1}$ that $\beta \ro{c}-\gamma \bl{b}=A\bl{b}\ro{c}+C\ro{c}^2-E\bl{b}\ro{c}+F\bl{b}^2=0$. Also, 
 $$G(X;\lambda)\!= \! \left( \!\! \begin{array}{ll} \alpha \lambda X_{\sigma_1} + (\bl{b}+\ro{c})X_{\sigma_2} +AX_{\sigma_1}^2 \\
 \ \ \ \ +   \mathcal{O}(|\lambda|\cdot |X_{\sigma_2}| + |X_1|\cdot|X_2|  + |X_{\sigma_2}|^2 +  ||X||^3+  |\lambda| \cdot ||X||^2+  |\lambda|^2 \cdot ||X|| ) \\  \alpha \lambda X_{\sigma_2} + H X_{\sigma_2}^2  +\mathcal{O}(  |X_{\sigma_2}|^3 + |\lambda|\cdot|X_{\sigma_2}|^2+   |\lambda|^2\cdot |X_{\sigma_2}| ) \end{array}\!\! \right) $$
in which $H:=A-\frac{\ro{c}}{\bl{b}}B+\frac{\ro{c}^2}{\bl{b}^2}C + D -\frac{\ro{c}}{\bl{b}}E+F$. Under the nondegeneracy conditions that $\alpha, A, H\neq 0$, the equation $F_{\sigma_2}=0$ now gives that $X_{\sigma_2}=0$ or $X_{\sigma_2}=-\frac{\alpha}{H}\lambda+\mathcal{O}(\lambda^2)$. In the first case, the equation $F_{\sigma_1}=0$ gives that $X_{\sigma_1}=0$ or $X_{\sigma_1}=-\frac{\alpha}{A}\lambda +\mathcal{O}(\lambda^2)$. In the second case we find that $X_{\sigma_1}=\pm\sqrt{\frac{\alpha(\bl{b}+\ro{c})}{AH}\lambda}+\mathcal{O}(\lambda)$.
Using our assumption that $X_{\sigma_2}=X_{\sigma_3}$ and the relation $X_{\sigma_4}=-\frac{\bl{b}}{\ro{c}}X_{\sigma_3}$, this yields three generic local steady state branches:
\begin{align}\nonumber
& X_{\sigma_1}=X_{\sigma_2}=X_{\sigma_3} = X_{\sigma_4}=0\, ,\\ & \nonumber X_{\sigma_1} \sim \lambda, X_{\sigma_2}=X_{\sigma_3}= X_{\sigma_4}= 0\ \mbox{and}\\ & \nonumber X_{\sigma_1}\sim \pm\sqrt{\lambda}, X_{\sigma_2}=X_{\sigma_3}\sim \lambda, X_{\sigma_4}=-\frac{\ro{c}}{\bl{b}}X_{\sigma_3}\sim \lambda\, .
\end{align}
These branches are not related by symmetry. On the other hand, because $x_{v_1}=X_{\sigma_4}, x_{v_2}=X_{\sigma_2}=X_{\sigma_3}, x_{v_3}=X_{\sigma_1}$, we have proved that the steady state asymptotics of network ${\rm\bf B}$ in Table \ref{table} is generic in systems with hidden $\Sigma_{\rm\bf B}$-symmetry.
\end{example}

\begin{example}
As for the previous examples, network $\widetilde {\rm\bf C}$ can only break synchrony when $a=0$. If we assume this and demand that  $\bl{b}+\ro{c} \neq 0$, then it follows from Example \ref{examplelinearstuff} that 
 $$\mathbb{E}_{0} = \{\ro{c}(\bl{b}+\ro{c})X_{\sigma_3} + \bl{b}^2X_{\sigma_4} + \bl{b}\ro{c}X_{\sigma_5}=0\}\, .$$
In the generic situation that $\bl{b}, \ro{c} \neq 0$, let us coordinatise this subspace by $(X_{\sigma_1}, X_{\sigma_2}, X_{\sigma_3}, X_{\sigma_4})$. In particular, we then have that $X_{\sigma_5}=  -\frac{\bl{b} + \ro{c}}{\bl{b}}X_{\sigma_3} - \frac{\bl{b}}{\ro{c}}X_{\sigma_4}$. 
Moreover, the action of $\Sigma_{\rm\bf C}$ on $\mathbb{E}_0$ is given in these coordinates by
\begin{align}\nonumber
& \phi^*_{\sigma_1}(X_{\sigma_1}, X_{\sigma_2}, X_{\sigma_3}, X_{\sigma_4}) = (X_{\sigma_1}, X_{\sigma_2}, X_{\sigma_3}, X_{\sigma_4})\, ,\\ \nonumber
& \phi^*_{\sigma_2}(X_{\sigma_1}, X_{\sigma_2}, X_{\sigma_3}, X_{\sigma_4}) = (X_{\sigma_2}, X_{\sigma_4}, X_{\sigma_3}, X_{\sigma_4})\, ,\\ \nonumber
& \phi^*_{\sigma_3}(X_{\sigma_1}, X_{\sigma_2}, X_{\sigma_3}, X_{\sigma_4}) = (X_{\sigma_3}, -\frac{\bl{b} + \ro{c}}{\bl{b}}X_{\sigma_3} - \frac{\bl{b}}{\ro{c}}X_{\sigma_4}, X_{\sigma_3}, X_{\sigma_4})\, ,\\ \nonumber
& \phi^*_{\sigma_4}(X_{\sigma_1}, X_{\sigma_2}, X_{\sigma_3}, X_{\sigma_4}) = (X_{\sigma_4}, X_{\sigma_4}, X_{\sigma_3}, X_{\sigma_4})\, ,\\ \nonumber 
& \phi^*_{\sigma_5}(X_{\sigma_1}, X_{\sigma_2}, X_{\sigma_3}, X_{\sigma_4}) = (-\frac{\bl{b} + \ro{c}}{\bl{b}}X_{\sigma_3} - \frac{\bl{b}}{\ro{c}}X_{\sigma_4}, X_{\sigma_4}, X_{\sigma_3}, X_{\sigma_4})\, . \nonumber
\end{align}
Coordinatising  $F: \mathbb{E}_0\times \Lambda \to\mathbb{E}_0$ as $F =(F_{\sigma_1} , F_{\sigma_2}, F_{\sigma_3}, F_{\sigma_4})$, we see that $\Sigma_{\rm\bf C}$-equivariance implies among others that the $F_{\sigma_i}$ can be expressed in terms of $F_{\sigma_1}$. For example,
\begin{align}\nonumber
  F_{\sigma_2}(X_{\sigma_1}, X_{\sigma_2}, X_{\sigma_3},X_{\sigma_4}; \lambda)  & = (\phi_{\sigma_2}^*F)_{\sigma_1}(X_{\sigma_1}, X_{\sigma_2}, X_{\sigma_3},X_{\sigma_4}; \lambda)  =  \nonumber \\  
 &F_{\sigma_1}(\phi_{\sigma_2}^*(X_{\sigma_1}, X_{\sigma_2}, X_{\sigma_3}, X_{\sigma_4}); \lambda) =  \nonumber 
 F_{\sigma_1}(X_{\sigma_2}, X_{\sigma_4}, X_{\sigma_3},  X_{\sigma_4}; \lambda) \, ,
 \end{align}
 and similarly,
 \begin{align}
 F_{\sigma_3}(X_{\sigma_1}, X_{\sigma_2}, X_{\sigma_3},X_{\sigma_4}; \lambda) =  & 
 F_{\sigma_1}(X_{\sigma_3}, -\frac{\bl{b} + \ro{c}}{\bl{b}}X_{\sigma_3} - \frac{\bl{b}}{\ro{c}}X_{\sigma_4}, X_{\sigma_3}, X_{\sigma_4} ; \lambda) \ \mbox{and} \nonumber  \\ 
 F_{\sigma_4}(X_{\sigma_1}, X_{\sigma_2}, X_{\sigma_3},X_{\sigma_4}; \lambda) =  & 
 F_{\sigma_1}(X_{\sigma_4}, X_{\sigma_4}, X_{\sigma_3}, X_{\sigma_4} ; \lambda) \, . \nonumber   
\end{align}
It turns out that equivariance is met precisely when $F_{\sigma_1}$ satisfies the additional condition
\begin{align} \nonumber
&F_{\sigma_1}( -\frac{\bl{b} + \ro{c}}{\bl{b}}X_{\sigma_3} - \frac{\bl{b}}{\ro{c}}X_{\sigma_4}, X_{\sigma_4}, X_{\sigma_3}, X_{\sigma_4} ; \lambda) = \nonumber \\
 -\frac{\bl{b} + \ro{c}}{\bl{b}} &F_{\sigma_1}(X_{\sigma_3}, -\frac{\bl{b} + \ro{c}}{\bl{b}}X_{\sigma_3} - \frac{\bl{b}}{\ro{c}}X_{\sigma_4}, X_{\sigma_3}, X_{\sigma_4} ; \lambda) - \frac{\bl{b}}{\ro{c}}F_{\sigma_1}(X_{\sigma_4}, X_{\sigma_4}, X_{\sigma_3}, X_{\sigma_4} ; \lambda) \, .  \nonumber 
\end{align}
In particular,  one may verify that $F_{\sigma_1}=F_{\sigma_3}$ and $F_{\sigma_2} = -\frac{\bl{b} + \ro{c}}{\bl{b}}F_{\sigma_3} - \frac{\bl{b}}{\ro{c}}F_{\sigma_4}$ whenever $X_{\sigma_1}=X_{\sigma_3}$ and $X_{\sigma_2} = -\frac{\bl{b} + \ro{c}}{\bl{b}}X_{\sigma_3} - \frac{\bl{b}}{\ro{c}}X_{\sigma_4}$. This confirms that $F$ leaves the robust synchrony space $\mathbb{E}_0\cap \{X_{\sigma_1}=X_{\sigma_3}, X_{\sigma_2}=X_{\sigma_5}\}$ invariant - recall that this corresponds to network ${\rm\bf C}$.  

We will choose $X_{\sigma_2}$ and $X_{\sigma_4}$ as the free variables in this restricted system, and write
$$X_{\sigma_1} = X_{\sigma_3} =-\frac{\bl{b}}{\bl{b} + \ro{c}} X_{\sigma_2}- \frac{\bl{b}^2}{\ro{c} (\bl{b} + \ro{c})} X_{\sigma_4} \  .$$
It thus follows that we are searching for zeroes of the map
\begin{align}
 \nonumber
 G(X_{\sigma_2}, X_{\sigma_4}; \lambda) &:= \left( \begin{array}{ll} F_{\sigma_1}(X_{\sigma_2}, X_{\sigma_4}, -\frac{\bl{b}}{\bl{b} + \ro{c}} X_{\sigma_2}- \frac{\bl{b}^2}{\ro{c} (\bl{b} + \ro{c})} X_{\sigma_4} ,X_{\sigma_4};\lambda ) \\ F_{\sigma_1}(X_{\sigma_4}, X_{\sigma_4}, -\frac{\bl{b}}{\bl{b} + \ro{c}} X_{\sigma_2}- \frac{\bl{b}^2}{\ro{c} (\bl{b} + \ro{c})} X_{\sigma_4} ,X_{\sigma_4};\lambda ) \end{array}\right) \, ,
 \end{align}
with $F_{\sigma_1}$ satisfying the above restriction. Assuming from this point on that $\lambda \in \Lambda :=\R$, one can quite easily translate the restriction on $F_{\sigma_1}$ into a set of equations for its Taylor series coefficients (up to any desired order), that we do not present here. 
The analysis of the equation $G(X_{\sigma_2}, X_{\sigma_4}; \lambda)=0$ now proceeds as in the previous examples. 

For instance, it is clear that $G_1=G_2$ when we put $X_{\sigma_2} = X_{\sigma_4}$ (corresponding to partial synchrony). Setting  $X_{\sigma_2} = X_{\sigma_4}$ in the equation $G_1=0$ then gives that $X_{\sigma_2} = X_{\sigma_4} = 0$ or $X_{\sigma_2} = X_{\sigma_4} \sim \lambda$, under generic conditions on the Taylor coefficients of $F_{\sigma_1}$.

To find non-synchronous solutions, one may observe that the equation $\frac{G_1-G_2}{X_{\sigma_2}-X_{\sigma_4}}=0$ generically leads to a relation of the form $X_{\sigma_2} = X_{\sigma_2}(X_{\sigma_4}, \lambda)$. Substituting this relation in the equation $G_2=0$ then yields a solution branch in which $X_{\sigma_2} \sim \lambda$ and $X_{\sigma_4} \sim \lambda$. Furthermore, doing the calculation explicitly one finds that $X_{\sigma_2}-X_{\sigma_4}\sim \lambda^2$ generically. In particular, this branch is not partially synchronous. Summarizing, we find the following local branches of steady state solutions:
 \begin{align}\nonumber
& X_{\sigma_1}=X_{\sigma_2}=X_{\sigma_3} = X_{\sigma_4}=X_{\sigma_5}=0\, \nonumber ,\\ 
& X_{\sigma_2}=X_{\sigma_4}= X_{\sigma_5} \sim \lambda, X_{\sigma_1} = X_{\sigma_3} = -\frac{\bl{b}}{\ro{c}}X_{\sigma_2} \sim \lambda \ \mbox{and} \nonumber \\ 
& \nonumber X_{\sigma_2} = X_{\sigma_5} \sim \lambda, X_{\sigma_4}  \sim \lambda, X_{\sigma_1} = X_{\sigma_3} =   -\frac{\bl{b}}{\bl{b} + \ro{c}} X_{\sigma_2}- \frac{\bl{b}^2}{\ro{c} (\bl{b} + \ro{c})} X_{\sigma_4} \sim \lambda \, ,
\end{align}
where $X_{\sigma_2} - X_{\sigma_4}\sim \lambda^2$ for the last branch. The identification $X_{\sigma_1} =  X_{\sigma_3} = x_{v_3}$, $X_{\sigma_2} =  X_{\sigma_5} = x_{v_1}$ and $X_{\sigma_4} =   x_{v_2}$ then yields the results on network ${\rm\bf C}$ reported in Table \ref{table}.
\end{example}

\noindent Under generic conditions on the response function  $f=f(X_{\sigma_1}, X_{\sigma_2}, X_{\sigma_3};\lambda)$ of networks ${\rm\bf A}, {\rm\bf B}$ and ${\rm\bf C}$, Lyapunov-Schmidt reduction at a synchrony breaking bifurcation leads to a reduced bifurcation equation $F(X;\lambda)=0$ that satisfies all the nondegeneracy conditions required of a generic equivariant bifurcation. This fact can be checked by performing the Lyapunov-Schmidt reduction explicitly, and such an analysis proves that the asymptotics displayed in Table \ref{table} is correct. Not surprisingly, the explicit Lyapunov-Schmidt reduction requires a long analysis as well.  For now, it is enough to remark that ``generic hidden symmetry considerations'' correctly predict the content of Table \ref{table}.

 Information on the stability of the bifurcating branches  can not be obtained from Lyapunov-Schmidt  reduction, but can be revealed with  techniques like centre manifold reduction. We are currently developing this technique for dynamical systems with semigroup symmetry, so we shall not prove any of the statements on stability that were made in Section \ref{examplessection}.

\section{Interior symmetry}\label{interiorsection}
In this section, we show that ``interior network symmetry'' can be interpreted as hidden network symmetry. The concept of interior symmetry was introduced in \cite{pivato2} and further studied in for example \cite{anto4}. We foresee that thinking of interior symmetry as hidden symmetry may be particularly useful for understanding bifurcations in networks with interior symmetry.
 
Interior symmetries are symmetries of certain subsets of a network that may not extend to symmetries of the full network. To make the concept of interior symmetry precise, let ${\rm \bf N}=\{A\rightrightarrows_t^s V\}$ be a network. For a subset of vertices $S\subset V$, let us define 
$$A^{S}:=\{a\in A\, |\, t(a)\in S\, \}\ \mbox{and}\ V^S:=S\cup \{s(a)\, |\, a\in A^S\} \, .$$ 
Then ${\rm\bf N}^S:=\{A^S\rightrightarrows_t^s V^S\}$ defines a subgraph of ${\rm\bf N}$ and we shall denote by 
$$e^S: {\rm\bf N}^S\to {\rm\bf N}$$
the inclusion of ${\rm\bf N}^S$ in ${\rm\bf N}$. In general, $e^S$ is not a graph fibration and hence ${\rm\bf N}^S$ is not a subnetwork of ${\rm\bf N}$, if only because there are no arrows $a^S \in A^S$ with $$t(a^S)\in \p S:=V^S\backslash S\, ,$$
while of course there may well be arrows $a\in A$ with $t(a)\in \p S$. Using the terminology of graph fibrations, interior symmetry can be defined as follows:
\begin{definition}
An {\it interior symmetry} of $S$ is a self-fibration $\phi^S: {\rm \bf N}^S\to {\rm \bf N}^S$, such that $\phi^S(v^S)=v^S$ for all $v^S\in \p S$ and $\phi^S(v^S)\in S$ for all $v^S\in S$.
\end{definition}
Note that an interior symmetry need not extend to a graph fibration of ${\rm \bf N}$. Nevertheless, the following lemma was proved in \cite{pivato2}. 
\begin{lemma}\label{interiorlemma}
Let ${\rm \bf N}=\{A\rightrightarrows_t^s V\}$ be a network, $S\subset V$ and $\phi^S$ an interior symmetry of $S$. The finest partition $P$ of $V$ such that $v\sim_P \phi^S(v)$ for all $v\in S$, is balanced. Hence, $${\rm Syn}_P=\{ x\in E_{\rm\bf N}\,|\, x_v=x_{\phi^S(v)}\ \mbox{for all}\ v\in S\}$$ is a robust synchrony space.
\end{lemma}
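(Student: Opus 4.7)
The strategy is to apply the balance criterion of Theorem \ref{balancedlemma}(ii), producing for each pair $v_1 \sim_P v_2$ a groupoid element $\beta_{v_2,v_1} \in \mathbb{G}_{v_2,v_1}$ whose action on arrows targeting $v_1$ respects $\sim_P$ on sources. First I will observe that because $\phi^S(v) \in S$ whenever $v \in S$, every generating relation $v \sim \phi^S(v)$ lives inside $S$; hence cells outside $S$ are singleton classes of $P$, and the balance condition needs to be verified only for pairs inside $S$.

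The heart of the argument is the single-step case $v_2 = \phi^S(v_1)$ with $v_1 \in S$. The key structural observation is that for $v_1 \in S$, the arrows of ${\rm\bf N}$ targeting $v_1$ coincide with those of ${\rm\bf N}^S$ targeting $v_1$, since any arrow of ${\rm\bf N}$ with target in $S$ lies in $A^S$ by definition. Applying the fibration property of $\phi^S$ inside ${\rm\bf N}^S$, the restriction $\beta := \phi^S|_{t^{-1}(v_1)}$ is therefore a colour-preserving bijection $t^{-1}(v_1) \to t^{-1}(v_2)$ in ${\rm\bf N}$; since $\phi^S$ also preserves cell colours, $\beta \in \mathbb{G}_{v_2,v_1}$. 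For any $a \in t^{-1}(v_1)$, the fibration identity $s \circ \phi^S = \phi^S \circ s$ gives $s(\beta(a)) = \phi^S(s(a))$. Now $s(a) \in V^S = S \cup \p S$: if $s(a) \in S$ then $s(a) \sim_P \phi^S(s(a))$ directly from the definition of $P$, and if $s(a) \in \p S$ then $\phi^S$ fixes $s(a)$, so the relation is again trivial. The symmetric case $v_1 = \phi^S(v_2)$ is handled identically, using $\beta := (\phi^S|_{t^{-1}(v_2)})^{-1}$.

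For a general pair $v_1 \sim_P v_2$ with both in $S$, I will use a chain $v_1 = w_0, w_1, \ldots, w_k = v_2$ of single-step relations generating the equivalence, compose (and invert where needed) the $\beta$'s produced above to obtain one $\beta_{v_2,v_1} \in \mathbb{G}_{v_2,v_1}$, and iterate the one-step conclusion together with the transitivity of $\sim_P$ to deduce $s(a) \sim_P s(\beta_{v_2,v_1}(a))$ for each $a \in t^{-1}(v_1)$. Theorem \ref{balancedlemma} then yields that ${\rm Syn}_P$ is robust. The only subtle point I anticipate is the identification of $t^{-1}(v_1)$ in ${\rm\bf N}$ with $t^{-1}(v_1)$ in ${\rm\bf N}^S$ when $v_1 \in S$; without it one cannot transport the interior fibration property of $\phi^S$ into a genuine groupoid element of ${\rm\bf N}$. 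Once this identification is noted, everything else is a routine chain-and-transitivity argument.
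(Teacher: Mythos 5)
Your proof is correct and follows essentially the same route as the paper's: cells outside $S$ are singletons, and for the generating pairs $(v,\phi^S(v))$ one takes $\beta=\phi^S|_{t^{-1}(v)}$, which is a colour-preserving bijection by the fibration property of $\phi^S$ (using that $t^{-1}(v)$ in ${\rm\bf N}$ equals $t^{-1}(v)$ in ${\rm\bf N}^S$ for $v\in S$), and checks the source condition by splitting into $s(a)\in S$ and $s(a)\in\p S$. Your explicit treatment of the chain/transitivity step for general pairs $v_1\sim_P v_2$ is a point the paper leaves implicit, but it is routine and does not change the argument.
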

\begin{proof}
Note that $\{v\}\in P$ for every $v\notin S$ because $\phi^S(S)\subset S$. Thus, by Theorem \ref{balancedlemma} we only need to check that for every $v\in S$ there is a colour preserving bijection $\beta: t^{-1}(v)\to t^{-1}(\phi^S(v))$ such that $s(a) \sim_P s(\beta(a))$ for all $a\in t^{-1}(v)$. We claim that $\beta=\phi^S|_{t^{-1}(v)}$ satisfies this requirement because $\phi^S$ is a graph fibration. Indeed, if $a\in t^{-1}(v)$ and $s(a)\in S$, then $\phi^S(s(a))\in S$ and hence $s(a)  \sim_P  \phi^S(s(a)) = s(\phi^S(a))$. Otherwise, when $s(a)\notin S$, then $s(a)\in \p S$ and $s(a) = \phi^S(s(a)) = s(\phi^S(a))$.
\end{proof}
We now provide an alternative explanation of Lemma \ref{interiorlemma} based on hidden symmetry. We first describe how to ``attach'' or ``glue'' a copy of ${\rm\bf N}^S$ onto ${\rm\bf N}$ along its ``boundary'' $\p S$:
\begin{definition}
Given the networks ${\rm\bf N}=\{A\rightrightarrows_t^s V\}$ and ${\rm\bf N}^S=\{A^S\rightrightarrows V^S\}$ as above (for some $S\subset V$), we define the {\it connected sum}
$${\rm\bf N} \dot\cup_{\p S} {\rm\bf N}^S$$ 
as the network that arises from first constructing the disjoint union ${\rm\bf N} \dot\cup {\rm\bf N}^S$ (i.e. the network with vertex set $V\dot\cup V^S$ and arrow set $A\dot\cup A^S$ and source and target maps induced by inclusion) and subsequently identifying the vertices $v^S \in \p S$ of ${\rm\bf N}^S$ with their images $e^S(v^S)$ in ${\rm \bf N}$. 
\end{definition}
 
\begin{proposition}
The connected sum ${\rm\bf N} \dot\cup_{\p S} {\rm\bf N}^S$ is a network. The inclusion $$i: {\rm\bf N} \to {\rm\bf N} \dot\cup_{\p S} {\rm\bf N}^S$$ is an injective graph fibration and the  fold map 
$$j: {\rm\bf N} \dot\cup_{\p S}   {\rm\bf N}^S\to {\rm\bf N}\ \mbox{defined by}\ j|_{\rm\bf N} := {\rm Id}_{\rm\bf N} \ \mbox{and}\ j|_{{\rm\bf N}^S} :=e^S$$ 
is a surjective graph fibration.
\end{proposition}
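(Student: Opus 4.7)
The plan is to first pin down precisely what the vertex and arrow sets of the connected sum look like after the identification, then verify the network axioms, and finally check the fibration properties for $i$ and $j$ by case analysis. I will write $S' := V^S \setminus \p S$ for the ``free'' copy of $S$ inside $V^S$ (the part that is not identified with anything in $V$); then after gluing, the connected sum $\widetilde{\rm\bf N} := {\rm\bf N} \dot\cup_{\p S} {\rm\bf N}^S$ has vertex set $\widetilde V = V \sqcup S'$ and arrow set $\widetilde A = A \sqcup A^S$, with source and target maps inherited from ${\rm\bf N}$ and ${\rm\bf N}^S$ after identifying each $v^S \in \p S$ with $e^S(v^S) \in V$. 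The distributional observation that controls everything is: arrows in $A \subset \widetilde A$ target only vertices in $V$, while arrows in $A^S \subset \widetilde A$ target only vertices in $S'$ (by definition $A^S$ targets $S \subset V^S$, and $S$ is disjoint from the identified part $\p S$). Colouring is inherited from ${\rm\bf N}$, using that ${\rm\bf N}^S$ is a subgraph of ${\rm\bf N}$ with the same colours.

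Next, I will verify the two axioms of Definition \ref{defnetwork} for $\widetilde{\rm\bf N}$. Axiom \textbf{1} (arrows of the same colour have sources and targets of the same colour) holds separately in ${\rm\bf N}$ and ${\rm\bf N}^S$ and is therefore preserved in the disjoint union; the identification of $\p S$ respects colours. For axiom \textbf{2}, I need a colour-preserving bijection between incoming arrows of any pair of same-coloured vertices of $\widetilde{\rm\bf N}$. Same-coloured pairs entirely inside $V$ use the bijection from ${\rm\bf N}$, and same-coloured pairs entirely inside $S'$ use the one from ${\rm\bf N}^S$; for a mixed pair $v \in V$ and $v' \in S'$ of the same colour, the natural bijection between $t^{-1}(v) \subset A$ and $t^{-1}(v') \subset A^S$ does the job, since $A^S$ is by construction a copy of precisely those arrows of $A$ that target $S$.

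The two fibration statements now follow from the distributional observation. For $i : {\rm\bf N} \to \widetilde{\rm\bf N}$ injectivity on vertices and arrows is immediate; the morphism property is clear, and for property \emph{ii)} of Definition \ref{deffibration} I observe that the arrows in $\widetilde A$ targeting $i(v)=v \in V$ are exactly the arrows in $A$ targeting $v$, so $i$ restricts to the identity on this fibre. For $j : \widetilde{\rm\bf N} \to {\rm\bf N}$ surjectivity is immediate since $j|_{\rm\bf N} = {\rm Id}$. For the fibration property I split into cases: if $\widetilde v = v \in V$ then arrows in $\widetilde A$ targeting $v$ are exactly arrows in $A$ targeting $j(\widetilde v) = v$, so $j$ is the identity on this fibre; if $\widetilde v = v' \in S'$ then $j(v') = e^S(v') \in V$, and arrows in $\widetilde A$ targeting $v'$ are exactly those of $A^S$ with target $v'$, which are in colour-preserving bijection with the arrows in $A$ targeting $e^S(v')$ via $e^S$. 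There is no real obstacle here; the main challenge is purely bookkeeping, namely keeping straight which copy of $S$ each arrow targets after the identification of $\p S$, and once that is settled the verifications are routine.
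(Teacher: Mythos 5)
Your proof is correct and takes essentially the same route as the paper's: the same description of the connected sum as $V\sqcup S$ on vertices and $A\sqcup A^S$ on arrows, the same key observation that arrows of $A^S$ target only the unidentified copy of $S$ while all arrows targeting vertices of $V$ come from $A$, and the same case-by-case verification of the network axioms and of the fibration property for $i$ and $j$. The paper's argument is simply a terser version of yours.
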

\begin{proof}
Recall properties {\bf 1} and {\bf 2} required of a network in Definition \ref{defnetwork}. Because $e^S: {\rm\bf N}^S\to {\rm\bf N}$ is colour preserving, property {\bf 1} holds for ${\rm \bf N}\dot\cup{\rm\bf N}^S$ and hence also for ${\rm \bf N}\dot\cup_{\p S} {\rm\bf N}^S$. Property {\bf 2}  follows because there are no arrows in ${\rm \bf N}\dot\cup_{\p S} {\rm\bf N}^S$ from   $S\subset {\rm\bf N}^S$ to ${\rm\bf N}$. This implies that for any $v^S\in S$ there is a colour preserving bijection between $t^{-1}(v^S)\subset {\rm \bf N}^S$ and $t^{-1}(e^S(v^S))\subset {\rm \bf N}$. 
This is sufficient for the proof that there is a colour preserving bijection between the arrows targeting any two vertices of the same colour in ${\rm \bf N}\dot\cup_{\p S} {\rm\bf N}^S$.

Because there are no arrows from vertices outside ${\rm\bf N}\subset {\rm \bf N}\dot\cup_{\p S} {\rm\bf N}^S$  into ${\rm\bf N}$, clearly ${\rm\bf N}$ is a subnetwork of ${\rm\bf N} \dot\cup_{\p S} {\rm\bf N}^S$ and hence $i: {\rm\bf N}\to {\rm\bf N} \dot\cup_{\p S} {\rm\bf N}^S$ is an injective graph fibration.

To prove that $j$ is a surjective graph fibration, we remark that the maps ${\rm Id}_{\rm\bf N}: {\rm\bf N}\to{\rm\bf N}$ and $e^S: {\rm\bf N}^S\to{\rm\bf N}$ both satisfy the properties of a graph fibration and they coincide on $\p S\subset {\rm\bf N}^S$ and $e^S(\p S)\subset {\rm\bf N}$ that are identified in ${\rm\bf N} \dot\cup_{\p S} {\rm\bf N}^S$.
\end{proof}

\begin{remark}
The maps $i^*: E_{{\rm\bf N} \dot\cup_{\p S} {\rm\bf N}^S} \to E_{\rm\bf N}$ and $j^*:E_{{\rm\bf N}} \to E_{{\rm\bf N} \dot\cup_{\p S} {\rm\bf N}^S}$ are  given by 
$$(i^*x)_v= x_v\ \mbox{for} \ v\in {\rm \bf N}\, , \ (j^*x)_v=x_v\ \mbox{for}\ v\in {\rm \bf N}\ \mbox{and}\ (j^*x)_{v^S}=x_{e^S(v^S)}\ \mbox{for}\ v^S\in{\rm\bf N}^S\, .$$ 
Because $j$ is surjective, $j^*$ embeds the dynamics of ${\rm \bf N}$ into the phase space of ${\rm\bf N} \dot\cup_{\p S} {\rm\bf N}^S$ as the robust synchrony space 
$${\rm im}\, j^*={\rm im}\, (i\circ j)^*=\{x_v=x_{v^S}\ \mbox{when}\ v=e^S(v^S)\}\, .$$ 
\end{remark}
In the connected sum an interior symmetry manifests itself as a true symmetry:
\begin{proposition}
When $\phi^S: {\rm\bf N}^S\to{\rm\bf N}^S$ is an interior symmetry, then 
$$\phi: {\rm\bf N} \dot\cup_{\p S} {\rm\bf N}^S \to {\rm\bf N} \dot\cup_{\p S} {\rm\bf N}^S \ \mbox{defined by}\ \phi|_{\rm\bf N}:={\rm Id}_{\rm\bf N} \  \mbox{and} \ \phi|_{{\rm\bf N}^S}:=\phi^S\, $$
is a self-fibration.
\end{proposition}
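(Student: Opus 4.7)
The plan is to verify, first, that $\phi$ is well-defined on the quotient ${\rm\bf N} \dot\cup_{\p S} {\rm\bf N}^S$, and then to check the two properties of Definition \ref{deffibration} directly. Well-definedness is the one place where we genuinely use that $\phi^S$ is an \emph{interior} symmetry: the gluing identifies each $v^S \in \p S$ with $e^S(v^S) \in V$, so we must check that $\phi|_{\rm\bf N}$ and $\phi|_{{\rm\bf N}^S}$ agree on such points. On the ${\rm\bf N}$-side $\phi$ is the identity, and on the ${\rm\bf N}^S$-side the interior-symmetry assumption $\phi^S(v^S) = v^S$ for $v^S \in \p S$ yields the same value. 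On arrows nothing is identified in the connected sum, so $\phi$ acts unambiguously on $A \dot\cup A^S$ as ${\rm Id}_A$ on the first summand and as $\phi^S|_{A^S}$ on the second.

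Property $i)$ of Definition \ref{deffibration} (preservation of colours and compatibility with source and target) is immediate, since both ${\rm Id}_{\rm\bf N}$ and $\phi^S$ have this property on their respective summands and arrows are never identified. For property $ii)$ I would exploit the clean splitting of arrows by target in the connected sum. By construction of $A^S$, arrows of $A^S$ all target vertices in $S$, so after gluing no $A^S$-arrow targets a vertex of $V$; conversely, no $A$-arrow targets a vertex of the newly attached copy $S$. Moreover, $\phi$ preserves this split, sending $A$ into $A$ and $A^S$ into $A^S$.

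With this split in hand, the fibration property reduces to two cases. If $w \in V$, then $\phi(w) = w$, and the arrows targeting $\phi(w)$ in the connected sum are exactly the $A$-arrows targeting $w$; for each such arrow $a$, the unique element of $\phi^{-1}(a)$ ending at $w$ is $a$ itself, because $\phi|_A = {\rm Id}_A$ and $\phi(A^S) \subset A^S$. If instead $w^S \in S$, then $\phi(w^S) = \phi^S(w^S) \in S$, and the arrows targeting $\phi^S(w^S)$ in the connected sum are precisely the $A^S$-arrows targeting $\phi^S(w^S)$. The fibration property of $\phi^S$ on ${\rm\bf N}^S$ then supplies a unique lift in $A^S$ ending at $w^S$, which is automatically the unique lift in the whole connected sum.

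The only real obstacle is bookkeeping: one must be certain that the vertex identification along $\p S$ does not smuggle in unexpected arrows that would spoil the $A$-versus-$A^S$ split by target. This is exactly what is guaranteed by the definition of ${\rm\bf N}^S$, which contains no arrows targeting $\p S$; consequently, after gluing, the only arrows crossing between the two summands are those of $A^S$ whose sources lie in $\p S$, and these still target $S$, not $V$. With the split secured, the verification of the fibration property is reduced, case by case, to the identity map on ${\rm\bf N}$ and to the fact that $\phi^S$ is itself a self-fibration.
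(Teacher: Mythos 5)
Your proof is correct and takes essentially the same route as the paper's (much terser) argument: the map is well defined because ${\rm Id}_{\rm\bf N}$ and $\phi^S$ agree on the identified boundary $\p S$, and it is a graph fibration because the arrows of the connected sum split by target between $A$ and $A^S$, reducing the lifting property to that of the two pieces. Your version merely makes explicit the bookkeeping that the paper leaves implicit.
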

\begin{proof}
The maps ${\rm Id}_{\rm\bf N}: {\rm\bf N}\to{\rm\bf N}$ and $\phi^S: {\rm\bf N}^S\to{\rm\bf N}^S$ both satisfy the properties of a graph fibration and they coincide on $\p S\subset {\rm\bf N}^S$ and $e(\p S)\subset {\rm\bf N}$ that are identified in ${\rm\bf N} \dot\cup_{\p S} {\rm\bf N}^S$
\end{proof}
\begin{remark}
The self-fibration $\phi$ may not commute with the self-fibration $i\circ j$ and hence $\phi^*$ may not commute with $(i\circ j)^*$. Consequently, the symmetries 
$\phi^*$ and $(i\circ j)^*$ of the dynamics of the connected sum may not commute either. In particular, $\phi^*$ may not leave the embedded phase space ${\rm im}\, j^* = {\rm im}\, (i\circ j)^*$ of network ${\rm\bf N}$ invariant. Thus, we may think of $\phi^*$ as a hidden symmetry for the dynamics of network ${\rm \bf N}$. It holds that
\begin{align}\nonumber
 & i^*\left( {\rm Fix}\, (i\circ j)^* \cap {\rm Fix}\, \phi^* \right)  = \\ \nonumber 
& i^*\!  \left\{ x\in E_{{\rm\bf N} \dot\cup_{\p S} {\rm\bf N}^S}\, |\, x_{v^S} = x_{e^S(v^S)}\ \mbox{and}\ x_{v^S}=x_{\phi^S(v^S)}\ \mbox{for all}\ v^S\in S \right\} = 
\\ \nonumber
& \{x\in E_{\rm\bf N}\, |\, x_v=x_{\phi^S(v)}\ \mbox{for all}\ v\in S\} ={\rm Syn}_P\, ,
\end{align}
where $P$ is the partition of Lemma \ref{interiorlemma}. This proves that ${\rm Syn}_P$ is an invariant subspace in the dynamics of ${\rm \bf N}$ because there are certain symmetries in the dynamics of a lift of ${\rm \bf N}$. In other words, Lemma \ref{interiorlemma} is a consequence of hidden symmetry.
\end{remark}
\begin{example} 
Figure \ref{pictinterior} shows a network ${\rm \bf N}$ (drawn left of the black vertical dashed line) with cells $V=\{v_1, v_2, v_3, v_4\}$  of which the subset $S=\{v_1, v_2, v_3\}$ has an  interior symmetry group $S_3$ that acts by permutation of these vertices. In this example, $\p S=\{v_4\}$ and ${\rm \bf N}^S$ is isomorphic to ${\rm \bf N}$ with its (purple dashed-dotted) arrow from $v_1$ to $v_4$ removed. The connected sum ${\rm \bf N}\dot\cup_{\p S} {\rm\bf N}^S$ has vertices $\{v_1, v_2, v_3, v_4=v_4^S, v_1^S, v_2^S, v_3^S\}$. It is the total network draw in Figure \ref{pictinterior}. It is clear from the figure that ${\rm \bf N}\dot\cup_{\p S} {\rm\bf N}^S$ has a true symmetry group $S_3$ that acts by permutation of the vertices $\{v_1^S, v_2^S, v_3^S\}$, keeping the remaining vertices fixed. In addition, the connected sum admits the noninvertible self-fibration $i\circ j$ that folds ${\rm \bf N}^S$ over ${\rm \bf N}$, i.e. it maps $v_i^S$ to $v_i$ (for $i=1,2,3$) and keeps the remaining vertices fixed.\\
 \begin{center}
 \begin{figure}[h]
\renewcommand{\figurename}{\rm \bf \footnotesize Figure}
\hspace{25mm}
\begin{tabular}{p{10cm}}
\begin{tikzpicture}[->, scale=1.8]
	  \tikzstyle{vertextype1}=[circle, draw, minimum size=20pt,inner sep=1pt]
	 \tikzstyle{vertextype2} = [draw, fill=yellow!100, minimum size=20pt,inner sep=1pt]
	 \tikzstyle{vertextype3} = [circle, draw, fill=yellow!25, minimum size=20pt,inner sep=1pt]
	 \tikzstyle{edgetype1} = [->, draw,line width=3pt,-,red!50]
	 \tikzstyle{edgetype2} = [draw, densely dashed, thick,-,black]
	  \draw [edgetype2] (2.5,-.25) -- (2.5,1.25); 
	 \node[vertextype1] (v3) at (0,0) {$v_3$};
	 \node[vertextype1] (v2) at (0, 1) {$v_2$};
	 \node[vertextype1] (v1) at (.71,.5) {$v_1$};
	  \node[vertextype1] (v3s) at (4,0) {$v_3^S$};
	 \node[vertextype1] (v2s) at (4, 1) {$v_2^S$};
	 \node[vertextype1] (v1s) at (3.29,.5) {$v_1^S$};
	\node[vertextype2] (v4) at (2,.5) {$v_4$};
	\path[]
	(v1) edge [thick, blue, bend right] node {} (v2)
	(v2) edge [blue, thick] node {} (v1)
	(v1) edge [thick, blue] node {} (v3)
	(v3) edge [bend right, blue, thick] node {} (v1)
	(v2) edge [thick, blue, bend right] node {} (v3)
	(v3) edge [blue, thick] node {} (v2)
	(v1s) edge [thick, blue, bend left] node {} (v2s)
	(v2s) edge [blue, thick] node {} (v1s)
	(v1s) edge [thick, blue] node {} (v3s)
	(v3s) edge [bend left, blue, thick] node {} (v1s)
	(v2s) edge [thick, blue, bend left] node {} (v3s)
	(v3s) edge [blue, thick] node {} (v2s)
	(v4) edge [red, dashed, thick] node {} (v1)
	(v4) edge [red, dashed, thick] node {} (v1s)
	(v4) edge [red, dashed, thick, bend right] node {} (v2)
	(v4) edge [red, dashed, thick, bend left] node {} (v3)
	(v4) edge [red, dashed, thick, bend left] node {} (v2s)
	(v4) edge [red, dashed, thick, bend right] node {} (v3s)
	(v1) edge [thick, purple, dashdotted, bend left] node {} (v4);
 \end{tikzpicture}
 \end{tabular}\vspace{-.5cm}
 \\ 
 \begin{tabular}{p{4.2cm}p{2.9cm}p{2cm}}
   \begin{equation}  \nonumber 
 \hspace{8mm} \begin{array}{rl}   
  \dot x_{v_1} &= f(x_{v_1}, \bl{ x_{v_2}}, \bl{x_{v_3}}, \ro{x_{v_4}}) \\
 \dot x_{v_2} &= f(x_{v_2}, \bl{x_{v_3}}, \bl{x_{v_1}}, \ro{x_{v_4}}) \\
\dot x_{v_3} &= f(x_{v_3}, \bl{x_{v_1}}, \bl{x_{v_2}}, \ro{x_{v_4}}) 
 \end{array}
 \end{equation} & 
 \begin{equation}  \nonumber  
\hspace{8mm} \begin{array}{rl} \mbox{} \\
 \dot x_{v_4} &= g(x_{v_4}, \pa{x_{v_1}}) \\ \mbox{}   
 \end{array}
 \end{equation} & 
  \begin{equation}  \nonumber 
\hspace{8mm} \begin{array}{rl}   
  \dot x_{v_1^S} &= f(x_{v_1^S}, \bl{ x_{v_2^S}}, \bl{x_{v_3^S}}, \ro{x_{v_4}}) \\
 \dot x_{v_2^S} &= f(x_{v_2^S}, \bl{x_{v_3^S}}, \bl{x_{v_1^S}}, \ro{x_{v_4}}) \\
\dot x_{v_3^S} &= f(x_{v_3^S}, \bl{x_{v_1^S}}, \bl{x_{v_2^S}}, \ro{x_{v_4}}) 
 \end{array}
 \end{equation}
 \end{tabular}
\vspace{-6mm}
\caption{\footnotesize {\rm A network with interior symmetry group $S_3$ embedded inside a connected sum.}}
   \label{pictinterior}
   \vspace{-6mm}
       \end{figure}
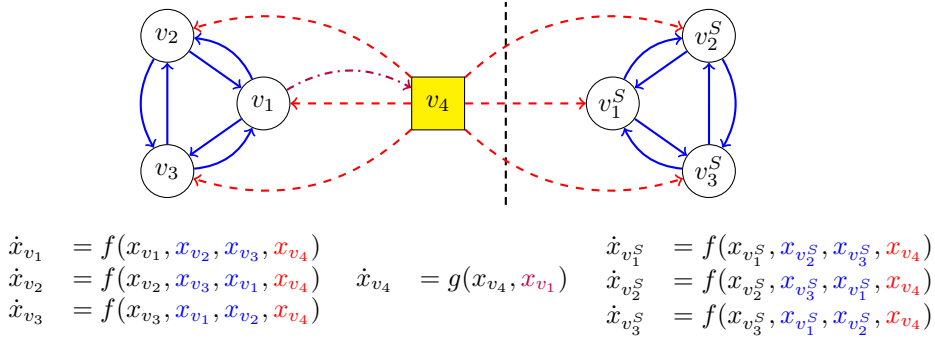  
          \end{center} 
The equations of motion of the networks ${\rm \bf N}$ and ${\rm \bf N}\dot\cup_{\p S} {\rm\bf N}^S$ are also provided in Figure \ref{pictinterior}, where we remark that $f(X_1, X_2, X_3, X_4)=f(X_1, X_3, X_2, X_4)$ because ${\rm \bf N}$ has a nontrivial symmetry groupoid (cells $v_1, v_2$, $v_3$, $v_1^S, v_2^S$ and $v_3^S$ receive two blue arrows).

It is easy to check directly from the equations of motion that
$$
(i\circ j)^*: (x_{v_1}, x_{v_2}, x_{v_3}, x_{v_4}, x_{v_1^S}, x_{v_2^S}, x_{v_3^S})\mapsto  (x_{v_1}, x_{v_2}, x_{v_3}, x_{v_4}, x_{v_1}, x_{v_2}, x_{v_3})
$$
sends solutions to solutions. Moreover, for any permutation $\phi^S:\{v_1^S, v_2^S, v_3^S\} \to \{v_1^S, v_2^S, v_3^S\}$, so does the map
$$
\phi^*: (x_{v_1}, x_{v_2}, x_{v_3}, x_{v_4}, x_{v_1^S}, x_{v_2^S}, x_{v_3^S}) \mapsto (x_{v_1}, x_{v_2}, x_{v_3}, x_{v_4}, x_{\phi^S(v_1^S)}, x_{\phi^S(v_2^S)}, x_{\phi^S(v_3^S)})\, .
$$
It is clear that $(i\circ j)^*$ and $\phi^*$ do not commute, and that $\phi^*$ does not leave ${\rm im}\, (i\circ j)^*$ invariant, unless $\phi^S={\rm Id}_{\{v_1^S, v_2^S, v_3^S\}}$. Nevertheless, the subspace
$${\rm Fix}\, (i\circ j)^* \cap {\rm Fix}\, \phi^* = \{(x_{v_1}, x_{v_2}, x_{v_3}, x_{v_4}, x_{v_1}, x_{v_2}, x_{v_3})\, |\, x_{v_i}=x_{\phi^S(v_i)}\ \mbox{for} \ i=1,2,3 \}$$
is a symmetry-induced invariant subspace for the dynamics that projects under the conjugacy $i^*: (x_{v_1}, x_{v_2}, x_{v_3}, x_{v_4}, x_{v_1^S}, x_{v_2^S}, x_{v_3^S})\mapsto (x_{v_1}, x_{v_2}, x_{v_3}, x_{v_4})$ to the invariant subset
$$\{ (x_{v_1}, x_{v_2}, x_{v_3}, x_{v_4})\, |\, x_{v_i}=x_{\phi^S(v_i)}\ \mbox{for} \ i=1,2,3 \}$$
for the dynamics of ${\rm \bf N}$.
\end{example}

\section{Nonhomogeneous networks}\label{nonhomogeneoussection}
The results for homogeneous networks of Sections \ref{homnetsection}, \ref{fundamentalsection} and \ref{hiddensection} can be generalised to nonhomogeneous networks without much effort, although the notation is heavier. We present the corresponding statements here without proof, because they are harder to formulate than to prove. We start with a definition:
\begin{definition}\label{defnonhomogeneous}
A {\it nonhomogeneous network} is a network with vertices of different colours, in which the arrows that target one vertex all have a different colour.
\end{definition}
Nonhomogeneous networks have a trivial symmetry groupoid that is not a group. Let us label the various colours of the cells of a nonhomogeneous network ${\rm\bf N}$ by $c=1,\ldots, C$ and let $V_c$ denote the collection of cells of colour $c$, so that the vertex set of ${\rm\bf N}$ is $V=\bigcup_{c=1}^C V_c$. We shall assume that every cell of colour $c$ receives exactly $m^{d,c}$ arrows from cells of colour $d$. We label the different colours of these arrows by $j=1, \ldots, m^{d,c}$. The interaction structure of the nonhomogeneous network can thus be described by input maps
$$\sigma_1^{d,c}, \ldots, \sigma^{d,c}_{m^{d,c}}: V_c\to V_d\ \mbox{for every pair of cell colours}\ c\ \mbox{and} \ d\, . $$
Here, $\sigma_{j}^{d,c}(v)\in V_d$ (for $v\in V_c$) is the source of the unique arrow of colour $j$ that targets $v$. We assume that $\sigma_1^{c,c}={\rm Id}_{V_c}$ and that $\sigma_j^{d,c}\neq\sigma_k^{d,c}$ when $j\neq k$. A map between the cells of nonhomogeneous networks defines a graph fibration if and only if it intertwines input maps.

In terms of input maps, a network map for a nonhomogeneous network ${\rm\bf N}$ is of the form 
$$(\gamma_f^{\rm\bf N})_v(x) = f^{(c)}\left(x_{\sigma_1^{1,c}(v)}, \ldots, x_{\sigma_{m^{C,c}}^{C,c}(v)}\right)\ \mbox{for every}\ v\in V_c\, .$$
Here, $f^{(c)}: E_{1}^{m^{1,c}}\times\ldots \times E_C^{m^{C,c}}\to E_c$ is the response function of cells of colour $c$ and $E_c$ denotes the common state space for cells of colour $c$ (i.e. $x_v\in E_c$ for $v\in V_c$).

Not surprisingly, one may verify that a partition $P$ of $V$ is balanced if and only if it is a refinement of the partition $\bigcup_{c=1}^C V_c$ of $V$ into cells of different colours, and if for all $\sigma_j^{d,c}$ and $P_k\subset V_c$ there is some $P_l\subset V_d$ such that $\sigma_j^{d,c}(P_k)\subset P_l$.

One can now remark that an input map of the form $\sigma_j^{d,c}: V_c\to V_d$ can only be composed with one of the form $\sigma_k^{e,d}:V_d \to V_e$. In particular, there is a smallest semigroupoid
$$\Sigma_{\rm\bf N} = \{\sigma_1^{d,c}, \ldots, \sigma^{d,c}_{n^{d,c}}\, |\, 1\leq c,d\leq C\}$$
that is closed under the composition of elements of the form $\sigma_k^{e,d}\circ\sigma_j^{d,c}$ and contains the original input maps. We shall write
$$\Sigma_{\rm\bf N} = \bigcup_{c=1}^C \Sigma_{\rm \bf N}^c\, , \ \mbox{in which}\ \Sigma_{\rm \bf N}^c:=\{\sigma_j^{d,c} \in \Sigma_{\rm\bf N}\, |\, d\ \mbox{and}\ j\ \mbox{arbitrary}\}\, .$$
We now define
\begin{definition}
Let ${\rm \bf N}$ be a nonhomogeneous network and let $1\leq c\leq C$ be a cell colour.
The $c$-th fundamental network
$\widetilde {\rm \bf N}^c$ of {\bf N} is the nonhomogeneous network with vertex set $\Sigma_{{\rm \bf N}}^c$, 
where vertex $\sigma_j^{d,c}$ is given colour $d$, and input maps $\widetilde \sigma_k^{e,d}: (\Sigma_{{\rm\bf N}}^c)_d\to (\Sigma_{{\rm\bf N}}^c)_e$ defined by
$$\widetilde \sigma_k^{e,d}(\sigma_j^{d,c}):= \sigma_{k}^{e,d}\circ\sigma_j^{d,c}\ \mbox{for}\ 1\leq k\leq m^{e,d} \, .$$ 
So $\widetilde {\rm \bf N}^c$ contains an arrow of colour $k$ from $\sigma_i^{e,c}$ to $\sigma_j^{d,c}$ if and only if $\sigma_i^{e,c}=\sigma_k^{e,d}\circ \sigma_j^{d,c}$. 
\end{definition} 
 \begin{theorem}\label{fundamentaltheoremoid}
 Every nonhomogeneous network ${\rm \bf N}$ is a quotient of its fundamental networks. 
 More precisely, for every vertex $v\in V_c$ of ${\rm \bf N}$, the map of vertices
$$\phi_v: \Sigma_{\rm\bf N}^c \to V \ \mbox{defined by}\ \phi_v(\sigma_j^{d,c}) := \sigma_j^{d,c}(v)$$
extends to a graph fibration from $\widetilde {\rm \bf N}^c$ to ${\rm \bf N}$.
 \end{theorem}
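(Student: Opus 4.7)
The plan is to mirror the proof of Theorem \ref{fundamentaltheorem} almost verbatim, the only difference being the bookkeeping of cell and arrow colours. Two ingredients are needed: (i) the characterisation of graph fibrations between nonhomogeneous networks in terms of intertwining input maps, stated in the excerpt just above the definition of $\Sigma_{\rm\bf N}$ (the obvious analogue of Proposition \ref{obviousprop}); and (ii) the construction of $\Sigma_{\rm\bf N}$ as a semigroupoid closed under the compositions $\sigma_k^{e,d}\circ\sigma_j^{d,c}$.

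First I would check that $\phi_v$ is colour-preserving on vertices. By definition of $\widetilde{\rm\bf N}^c$, the vertex $\sigma_j^{d,c}\in\Sigma_{\rm\bf N}^c$ carries colour $d$, and its image $\phi_v(\sigma_j^{d,c})=\sigma_j^{d,c}(v)$ lies in $V_d$, hence also has colour $d$. Next I would verify that $\phi_v$ intertwines every input map of $\widetilde{\rm\bf N}^c$ with the corresponding input map of ${\rm\bf N}$. Fix $\widetilde\sigma_k^{e,d}:(\Sigma_{\rm\bf N}^c)_d\to(\Sigma_{\rm\bf N}^c)_e$ and $\sigma_j^{d,c}\in(\Sigma_{\rm\bf N}^c)_d$. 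Then, directly from the definitions,
\begin{align*}
\sigma_k^{e,d}\bigl(\phi_v(\sigma_j^{d,c})\bigr) &= \sigma_k^{e,d}\bigl(\sigma_j^{d,c}(v)\bigr)= \bigl(\sigma_k^{e,d}\circ\sigma_j^{d,c}\bigr)(v) \\
&= \bigl(\widetilde\sigma_k^{e,d}(\sigma_j^{d,c})\bigr)(v) = \phi_v\bigl(\widetilde\sigma_k^{e,d}(\sigma_j^{d,c})\bigr).
\end{align*}
In other words $\sigma_k^{e,d}\circ\phi_v|_{(\Sigma_{\rm\bf N}^c)_d}=\phi_v|_{(\Sigma_{\rm\bf N}^c)_e}\circ\widetilde\sigma_k^{e,d}$ for every input colour $k$ and every pair $(d,e)$. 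Note that this computation only uses associativity of composition of input maps in $\Sigma_{\rm\bf N}$, so the semigroupoid structure is all that is required.

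Finally, applying the nonhomogeneous version of Proposition \ref{obviousprop}, colour-preservation together with the intertwining identity shows that the assignment $\sigma_j^{d,c}\mapsto \sigma_j^{d,c}(v)$ on vertices extends uniquely to a morphism of coloured directed graphs $\phi_v:\widetilde{\rm\bf N}^c\to{\rm\bf N}$ satisfying the fibration property: for each $\sigma_j^{d,c}\in\Sigma_{\rm\bf N}^c$, the intertwining relation pulls every arrow of colour $k$ targeting $\phi_v(\sigma_j^{d,c})$ back to the unique arrow of colour $k$ in $\widetilde{\rm\bf N}^c$ targeting $\sigma_j^{d,c}$.

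I do not foresee any genuine obstacle; the only thing that requires care is typographical, namely to keep the upper indices $(d,c),(e,d),(e,c)$ consistent so that the composition $\sigma_k^{e,d}\circ\sigma_j^{d,c}$ is always defined and again lies in $\Sigma_{\rm\bf N}^c$. Once that is organised, the argument is word-for-word the one given for homogeneous networks, and the remaining conclusions about the conjugacy $\phi_v^*\circ\gamma_f^{\rm\bf N}=\gamma_f^{\widetilde{\rm\bf N}^c}\circ\phi_v^*$ follow from Theorem \ref{devilletheorem1}.
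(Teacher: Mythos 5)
Your proof is correct and is exactly the argument the paper intends: the paper omits the proof of this theorem precisely because it is the routine adaptation of the proof of Theorem \ref{fundamentaltheorem}, which you carry out faithfully (colour preservation, the intertwining computation $\sigma_k^{e,d}\circ\phi_v=\phi_v\circ\widetilde\sigma_k^{e,d}$, and the nonhomogeneous analogue of Proposition \ref{obviousprop}). No gaps.
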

 
\begin{theorem}\label{symmetrytheoremoid}
For all $\sigma_i^{c,b}\in\Sigma_{\rm \bf N}$, the map
$$\phi_{\sigma_i^{c,b}}: \Sigma_{\rm\bf N}^c \to \Sigma_{\rm\bf N}^b\ \mbox{defined by}\ \phi_{\sigma_i^{c,b}}(\sigma_j^{d,c}):=\sigma_j^{d,c}\circ \sigma_i^{c,b}$$
extends to a graph fibration from $\widetilde {\rm \bf N}^c$ to $\widetilde {\rm \bf N}^b$. Hence the semigroupoid-equivariance
$$\phi_{\sigma_i^{c,b}}^* \circ  \gamma_f^{\widetilde {\rm\bf N}^b}= \gamma_f^{\widetilde {\rm\bf N}^c} \circ \phi_{\sigma_i^{c,b}}^*\ \mbox{for all}\ \sigma_i^{c,b}\in \Sigma_{\rm \bf N}\, .$$
\end{theorem}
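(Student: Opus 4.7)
The plan is to mirror the proof of Theorem \ref{symmetrytheorem} almost verbatim, with the semigroup $\Sigma_{\rm\bf N}$ replaced by the semigroupoid $\Sigma_{\rm\bf N}=\bigcup_{c}\Sigma_{\rm\bf N}^{c}$ and with careful bookkeeping of the source and target colours of every generator. The key observations are that (i) associativity of map composition is independent of colour, and (ii) the paper has already noted that a map between the vertex sets of two nonhomogeneous networks extends to a graph fibration if and only if it intertwines the input maps; so only this intertwining has to be verified.

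First I would check that $\phi_{\sigma_i^{c,b}}$ is well defined and colour-preserving. Since $\sigma_i^{c,b}:V_b\to V_c$ and $\sigma_j^{d,c}:V_c\to V_d$, the composition $\sigma_j^{d,c}\circ\sigma_i^{c,b}:V_b\to V_d$ belongs to $\Sigma_{\rm\bf N}^b$ because $\Sigma_{\rm\bf N}$ is closed under composition. The vertex $\sigma_j^{d,c}$ carries colour $d$ in $\widetilde{\rm\bf N}^c$, while its image $\sigma_j^{d,c}\circ\sigma_i^{c,b}$, being an element of the form $\sigma_k^{d,b}$, carries colour $d$ in $\widetilde{\rm\bf N}^b$. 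Hence $\phi_{\sigma_i^{c,b}}$ respects the vertex colouring.

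Next I would verify the intertwining relation on generators. For any input map $\widetilde\sigma_k^{e,d}$ with $1\le k\le m^{e,d}$, associativity of composition gives
\[
\phi_{\sigma_i^{c,b}}\bigl(\widetilde\sigma_k^{e,d}(\sigma_j^{d,c})\bigr)
=(\sigma_k^{e,d}\circ\sigma_j^{d,c})\circ\sigma_i^{c,b}
=\sigma_k^{e,d}\circ(\sigma_j^{d,c}\circ\sigma_i^{c,b})
=\widetilde\sigma_k^{e,d}\bigl(\phi_{\sigma_i^{c,b}}(\sigma_j^{d,c})\bigr).
\]
By the nonhomogeneous analogue of Proposition \ref{obviousprop} recorded in the paragraph preceding Definition 10.2, this is exactly what is needed for $\phi_{\sigma_i^{c,b}}$ to extend to a graph fibration $\widetilde{\rm\bf N}^c\to\widetilde{\rm\bf N}^b$. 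The equivariance identity $\phi_{\sigma_i^{c,b}}^{*}\circ\gamma_f^{\widetilde{\rm\bf N}^b}=\gamma_f^{\widetilde{\rm\bf N}^c}\circ\phi_{\sigma_i^{c,b}}^{*}$ is then an immediate application of Theorem \ref{devilletheorem1} to this fibration.

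The only real obstacle is notational: one must consistently track that each element of $\Sigma_{\rm\bf N}$ is a map between two specific colour strata, and that the composition $\sigma_k^{e,d}\circ\sigma_j^{d,c}$ is only defined when the middle index $d$ matches. No new idea is required beyond the homogeneous case. As an alternative, one could also derive the theorem by remarking that for every $\sigma_i^{c,b}\in\Sigma_{\rm\bf N}$ the left-multiplication rule makes $\widetilde{\rm\bf N}^c$ play, relative to $\widetilde{\rm\bf N}^b$, the role that $\widetilde{\widetilde{\rm\bf N}}$ played relative to $\widetilde{\rm\bf N}$ in Lemma \ref{homomorphismlemma}, and then invoke Theorem \ref{fundamentaltheoremoid}; but the direct verification above is shorter and more transparent.
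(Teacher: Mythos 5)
Your proposal is correct and follows exactly the route the paper intends: the authors state Theorem \ref{symmetrytheoremoid} without proof precisely because it is the straightforward colour-decorated version of the argument for Theorem \ref{symmetrytheorem}, namely checking that right-composition with $\sigma_i^{c,b}$ intertwines the left-multiplication input maps by associativity, invoking the nonhomogeneous analogue of Proposition \ref{obviousprop} to upgrade this to a graph fibration, and applying Theorem \ref{devilletheorem1} for the equivariance. Your bookkeeping of the source and target colours is the only content that needed checking, and it is done correctly.
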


\begin{theorem}\label{symmetrysynchronyoid}
For each $1\leq c\leq C$, let $P^{c}$ be a balanced partition of the cells  $\Sigma_{\rm\bf N}^c$ of $\widetilde {\rm\bf N}^c$. 
Moreover, let $\gamma^c: E_{\widetilde {\rm \bf N}^c} \to E_{\widetilde {\rm \bf N}^c}$ (for $1\leq c\leq C$) be a set of $\Sigma_{\rm \bf N}$-equivariant maps, that is
$$\phi_{\sigma_i^{c,b}}^* \circ  \gamma^b = \gamma^c \circ \phi_{\sigma_i^{c,b}}^*\ \mbox{for all}\ \sigma_i^{c,b}\in \Sigma_{\rm \bf N}\, .$$
Then $\gamma^c({\rm Syn}_{P^c})\subset {\rm Syn}_{P^c}$ for all $1\leq c\leq C$.
\end{theorem}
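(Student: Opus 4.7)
The plan is to adapt the proof of the homogeneous analog (Theorem \ref{symmetrysynchrony}): use $\Sigma_{\rm\bf N}$-equivariance to re-express each $\gamma^c$ as an admissible map for an \emph{enlarged} nonhomogeneous network built on the same cells as $\widetilde{\rm\bf N}^c$ but whose input maps range over all of $\Sigma_{\rm\bf N}$ rather than only the original generators, and then apply Theorem \ref{balancedlemma} in that larger network. First I would introduce, for each colour $c$, the response function at the identity,
\[
g^c(X):=(\gamma^c X)_{\sigma_1^{c,c}},\qquad X\in E_{\widetilde{\rm\bf N}^c}.
\]
For any cell $\sigma_j^{d,c}\in\Sigma_{\rm\bf N}^c$ (which has colour $d$ in $\widetilde{\rm\bf N}^c$), the equivariance relation $\phi_{\sigma_j^{d,c}}^*\circ\gamma^c=\gamma^d\circ\phi_{\sigma_j^{d,c}}^*$ of Theorem \ref{symmetrytheoremoid}, evaluated at its $\sigma_1^{d,d}$-component and using that $\phi_{\sigma_j^{d,c}}(\sigma_1^{d,d})=\sigma_j^{d,c}$, gives the key identity
\[
(\gamma^c X)_{\sigma_j^{d,c}} \;=\; g^d\bigl(\phi_{\sigma_j^{d,c}}^* X\bigr),
\]
whose $\sigma_k^{e,d}$-th argument is $X_{\sigma_k^{e,d}\circ\sigma_j^{d,c}}$.

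This identity exhibits $\gamma^c$ as an admissible map for the \emph{extended} nonhomogeneous network $\widehat{\rm\bf N}^c$ that has the same cells and colour assignment as $\widetilde{\rm\bf N}^c$, colour-$d$ response function $g^d$, and input maps $\widehat\sigma_k^{e,d}(\sigma_j^{d,c}):=\sigma_k^{e,d}\circ\sigma_j^{d,c}$ indexed by \emph{every} $\sigma_k^{e,d}\in\Sigma_{\rm\bf N}$, not only by the generators with $1\leq k\leq m^{e,d}$. (As in the homogeneous case, this does not make $\gamma^c$ admissible for $\widetilde{\rm\bf N}^c$ itself.) Next I would verify that $P^c$ remains balanced for $\widehat{\rm\bf N}^c$: by hypothesis each generator input map $\widetilde\sigma_k^{e,d}$ sends colour-$d$ blocks of $P^c$ into colour-$e$ blocks, and any non-generator $\sigma_k^{e,d}\in\Sigma_{\rm\bf N}$ is by construction a composition of generators, for which the corresponding $\widehat\sigma_k^{e,d}$ factors on $\Sigma_{\rm\bf N}^c$ as the very same composition of generator input maps of $\widetilde{\rm\bf N}^c$ (since left-multiplying by a product amounts to iterated left-multiplication). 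Compositions of partition-preserving maps preserve $P^c$, so $P^c$ is balanced for $\widehat{\rm\bf N}^c$, and applying Theorem \ref{balancedlemma} to $\gamma^c$ as an admissible map for $\widehat{\rm\bf N}^c$ yields $\gamma^c({\rm Syn}_{P^c})\subset{\rm Syn}_{P^c}$.

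The main obstacle is notational bookkeeping rather than genuine mathematical depth: one must keep careful track of which cell, input map, response function, or graph fibration belongs to which fundamental network, since a single symbol $\sigma_j^{d,c}$ plays many roles at once---as a cell of $\widetilde{\rm\bf N}^c$, as the label of an extended input map on \emph{each} $\Sigma_{\rm\bf N}^{c'}$, and as the index of a self-fibration $\phi_{\sigma_j^{d,c}}$. One must also verify carefully that the extended input maps $\widehat\sigma_k^{e,d}$ factor through generators of $\widetilde{\rm\bf N}^c$ and not of some other $\widetilde{\rm\bf N}^{c'}$. Once the notation is set up, the rest is essentially a transcription of the proof of Theorem \ref{symmetrysynchrony}.
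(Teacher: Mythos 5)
Your proposal is correct and is precisely the argument the paper has in mind: the paper deliberately omits the proof of Theorem \ref{symmetrysynchronyoid} (``harder to formulate than to prove''), intending exactly this transcription of the proof of Theorem \ref{symmetrysynchrony} --- use equivariance at the identity cells $\sigma_1^{d,d}$ to exhibit each $\gamma^c$ as admissible for the network on $\Sigma_{\rm\bf N}^c$ with input maps indexed by all of $\Sigma_{\rm\bf N}$, then note that balancedness is inherited because the extra input maps are compositions of the generators. The only cosmetic slip is attributing the equivariance relation to Theorem \ref{symmetrytheoremoid} when for arbitrary $\gamma^c$ it is the hypothesis of the theorem being proved; the relation you use is exactly that hypothesis, so nothing is affected.
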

\begin{example}
Consider the ordinary differential equations
\begin{align}
&\dot x_{v_1} = f^{(1)}(x_{v_1}, x_{v_2}, x_{v_3}) \nonumber \\ \nonumber
&\dot x_{v_2} =f^{(1)}(x_{v_2}, x_{v_2}, x_{v_3})\nonumber \\ \nonumber
&\dot x_{v_3}= f^{(2)}(x_{v_2}, x_{v_3}) \nonumber
\end{align}
that have the structure of a nonhomogeneous network ${\rm\bf N}$ in which cells $v_1$ and $v_2$ have colour $1$ and cell $v_3$ has colour $2$. The input maps of ${\rm \bf N}$ are given by
$$
 \begin{array}{c|ccc} {\rm \bf N} & v_1 & v_2 & v_3 \\ \hline 
\sigma_1^{1,1}  & v_1 & v_2 & *   \\
\sigma_2^{1,1} &  v_2 & v_2 & *  \\
\sigma_1^{2,1}  & v_3 & v_3 & * \\
\sigma_1^{1,2} & * & * &v_2\\
\sigma_1^{2,2} & * & * & v_3
\end{array}  \ .
$$
One may check that these maps are closed under composition and hence constitute a semigroupoid $\Sigma_{\rm\bf N}$. The composition table of $\Sigma_{\rm\bf N}$ reads
$$
\begin{array}{c|ccccc} \Sigma_{\rm \bf N} & \sigma_1^{1,1} & \sigma_2^{1,1} & \sigma_1^{1,2} & \sigma_1^{2,1} & \sigma_1^{2,2} \\ \hline 
\sigma_1^{1,1} & \sigma_1^{1,1} & \sigma_2^{1,1}  & \sigma_1^{1,2} & * & * \\
\sigma_2^{1,1} & \sigma_2^{1,1} & \sigma_2^{1,1} & \sigma_1^{1,2} & * & * \\
\sigma_1^{1,2} & * & * & * & \sigma_2^{1,1}  & \sigma_1^{1,2} \\
\sigma_1^{2,1} & \sigma_1^{2,1} & \sigma_1^{2,1} & \sigma_1^{2,2} & *  & * \\
\sigma_1^{2,2} & * & * & * & \sigma_1^{2,1} &  \sigma_1^{2,2}
\end{array} \ .
$$
It follows that the two fundamental networks of ${\rm\bf N}$ have the equations of motion
\begin{center}
 \begin{tabular}{p{8cm}}\vspace{-.7cm}
 \begin{equation} \nonumber
 \mbox{Network}\ \widetilde {\rm\bf N}^1:  \left\{ 
\begin{array}{ll}
\dot x_{\sigma_1^{1,1}} &= f^{(1)}(x_{\sigma_1^{1,1}}, x_{\sigma_2^{1,1}}, x_{\sigma_1^{2,1}})   \\
\dot x_{\sigma_2^{1,1}} &=f^{(1)}(x_{\sigma_2^{1,1}}, x_{\sigma_2^{1,1}}, x_{\sigma_1^{2,1}})    \\
\dot x_{\sigma_1^{2,1}} &=f^{(2)}(x_{\sigma_2^{1,1}}, x_{\sigma_1^{2,1}})                
\end{array}\right.
\end{equation}
\\  \vspace{-1cm}  
   \begin{equation} \nonumber
  \mbox{Network}\ \widetilde {\rm\bf N}^2:  \left\{
 \begin{array}{ll}
 \dot x_{\sigma_1^{1,2}} &= f^{(1)}(x_{\sigma_1^{1,2}}, x_{\sigma_1^{1,2}}, x_{\sigma_1^{2,2}} )\\
 \dot x_{\sigma_1^{2,2}} &= f^{(2)}(x_{\sigma_1^{1,2}}, x_{\sigma_1^{2,2}})
\end{array}\right.
\end{equation}
\end{tabular}
\end{center}\vspace{-.3cm}
It is clear that network $\widetilde {\rm\bf N}^1$ is isomorphic to the original network ${\rm \bf N}$. The semigroupoid of symmetries of the fundamental networks consists of the maps
 \begin{equation} \nonumber
\begin{array}{ll}               
\phi_{\sigma_1^{1,1}}^*(x_{\sigma_1^{1,1}}, x_{\sigma_2^{1,1}}, x_{\sigma_1^{2,1}})  & = (x_{\sigma_1^{1,1}}, x_{\sigma_2^{1,1}}, x_{\sigma_1^{2,1}})\\
\phi_{\sigma_2^{1,1}}^*(x_{\sigma_1^{1,1}}, x_{\sigma_2^{1,1}}, x_{\sigma_1^{2,1}})  & = (x_{\sigma_2^{1,1}}, x_{\sigma_2^{1,1}}, x_{\sigma_1^{2,1}})\\
\phi_{\sigma_1^{1,2}}^*(x_{\sigma_1^{1,2}}, x_{\sigma_1^{2,2}})  & = (x_{\sigma_1^{1,2}}, x_{\sigma_1^{1,2}}, x_{\sigma_1^{2,2}})\\
\phi_{\sigma_1^{2,1}}^*(x_{\sigma_1^{1,1}}, x_{\sigma_2^{1,1}}, x_{\sigma_1^{2,1}})  & = (x_{\sigma_2^{1,1}}, x_{\sigma_1^{2,1}})\\
\phi_{\sigma_1^{2,2}}^*(x_{\sigma_1^{1,2}}, x_{\sigma_1^{2,2}})  & = (x_{\sigma_1^{1,2}}, x_{\sigma_1^{2,2}})
\end{array}
\end{equation}
It is not hard to check that these maps send solutions to solutions. The robust synchrony space $\{x_{\sigma_1^{1,1}}=x_{\sigma_2^{1,1}}\}$ inside the phase space of network $\widetilde {\rm \bf N}^1$ is equal to the image of $\phi^*_{\sigma_2^{1,1}}$ and to the image of $\phi^*_{\sigma_1^{1,2}}$. This explains the existence of this synchrony space from (hidden) semigroupoid-symmetry.
 \end{example}
   \bibliography{CoupledNetworks}
\bibliographystyle{amsplain}

  \end{document}